\numberwithin{equation}{section}
\theoremstyle{definition}
\newtheorem{theorem}{Theorem}[section]
\newtheorem{remark}[theorem]{Remark}
\newtheorem{lemma}[theorem]{Lemma}
\newtheorem{example}[theorem]{Example}
\newtheorem{defi}[theorem]{Definition}
\newtheorem{Cor}[theorem]{Corollary}
\def\R{\mathbb{R}}
\def\cK{\mathcal{K}}
\def\O{\Omega}
\def\cV{\mathcal{V}}
\def\cT{\mathcal{T}}
\def\cE{\mathcal{E}}
\def\jump#1{\left[\hskip -3.5pt\left[#1\right]\hskip -3.5pt\right]}
\def\sjump#1{[\hskip -1.5pt[#1]\hskip -1.5pt]}
\begin{document}
\title[Mixed finite element method for a second order Dirichlet boundary control problem]
{Mixed finite element method for a second order Dirichlet boundary control problem}

\author{Divay Garg}\thanks{The first author's work is supported by CSIR Research Fellowship}

\address{Department of Mathematics, Indian Institute of Technology Delhi - 110016}

\email{divaygarg2@gmail.com}

\author{Kamana Porwal}\thanks{The second author's work is supported by DST Inspire Faculty Research  Grant}

\address{Department of Mathematics, Indian Institute of Technology Delhi - 110016}

\email{kamana@maths.iitd.ac.in}
\date{\today}

\subjclass{65N30, 65N15}
\keywords{Mixed finite element method, \emph{A priori} error estimates, \emph{A posteriori} error estimates, Optimal control, Dirichlet boundary control , Raviart-Thomas element}

\begin{abstract}
The main aim of this article is to analyze mixed finite element method for the second order Dirichlet boundary control problem. Therein, we develop both a priori and a posteriori error analysis using the energy space based approach.
 We obtain optimal order a priori error estimates in the energy norm and $L^2$-norm with the help of auxiliary problems. 
 The reliability  and the efficiency of proposed \emph{a posteriori} error estimator is discussed using the Helmholtz decomposition. Numerical experiments are presented to confirm the theoretical findings. 
\end{abstract}
\maketitle
\section{Introduction}
The optimal control problems subjected to partial differential equations have numerous applications in science and engineering. The objective of optimal control problems is to find the optimal control which minimizes the given cost functional with certain constraints being satisfied. Finite element methods are extensively used  for the numerical approximation of the optimal control problems as they are advantageous over other numerical methods in terms of implementation, accuracy and adaptability. As per the literature review, previous studies have marked a significant development in this area and the literature in this direction is too immense to mention all the results here. One can find key contributions towards this area in the articles cited here and references therein. 
For a general theory of optimal control problems constrained by partial differential equations and their numerical approximations, we refer to \cite{lions,trolzbook}. For the contributions towards the finite element analysis for the distributed optimal control problems, we refer to \cite{Falk:1973:OCP,super2,Geveci:1979:OCP,apostdistributed1,super1,distributed1,apostdistributed2,distributed2,distributed3,apostdistributed3}. The articles \cite{Falk:1973:OCP,Geveci:1979:OCP,distributed1,distributed2,distributed3} study the convergence analysis of finite element methods for the constrained distributed optimal control problems.  In \cite{super2,super1}, the authors have discussed super convergence results and the articles \cite{apostdistributed1,apostdistributed2,apostdistributed3} are devoted to  \emph{a posteriori} error analysis of the finite element method for the distributed control problems. In the article \cite{yanpingchen}, authors have used mixed finite element method to derive error estimates and super-convergence results for the distributed optimal control problem. The error estimates for the Neumann boundary control problems governed by linear state equations and semi linear state equations can be found in \cite{neumann1} and \cite{neumann2,neumann3}, respectively. The article \cite{neumann4} concerns the finite element analysis with graded mesh refinement for the Neumann boundary control problem. {Recently in \cite{leng:2017}, the authors proved the convergence and studied the optimal complexity of an adaptive finite element method for control constrained problems on $L^2$ errors. Therein, the contraction property and the quasi optimality of the adaptive finite element method is derived. The authors in \cite{leng:2018} study the distributed convex optimal control problem with integral control constraint and proved the optimal convergence rate for the adaptive finite element method under a mild assumption on the initial mesh.} \\
\par \noindent
 There is relatively less literature available for the Dirichlet boundary control problem due to variational difficulty. In the article \cite{20}, the authors have discussed the numerical approximation of semi linear elliptic Dirichlet boundary control problems with pointwise control constraints. In this article, an optimal error estimate for the finite element approximation of the optimal control in the $L^2$-norm is derived while the article \cite{38} improved the error estimate for the state and adjoint state variables. In both the articles \cite{20,38}, an ultra weak formulation has been used for formulating the model problem. The convergence analysis of the conforming finite element method for the Dirichlet boundary control problem is studied in \cite{42} using the energy space based approach. Therein, the Steklov-Poincare operator arising from the harmonic extension is used to define the continuous and discrete problems. The authors in \cite{chowdhurykkt} have used an alternative energy space based approach in which the control is sought from $H^1(\Omega)$ space and the resulting control is a harmonic function without being explicitly imposed. In \cite{gudi}, the authors have used energy space based approach and established the optimal order energy norm error estimates of the conforming finite element method for the constrained Dirichlet boundary control problem governed by diffusion problem. A variation of this approach using conforming finite element method is discussed in \cite{Karkulik:2020}.
Recently in \cite{meshless}, two meshless methods are proposed for solving the Dirichlet boundary optimal control problems governed by elliptic PDEs. The finite element analysis of Dirichlet control problems using an energy regularization is carried out in \cite{Winkler:2020}.
 We refer to the articles \cite{GLTY:2019,BY:2017} for a posteriori error analysis of finite element methods for boundary control problems governed by elliptic PDEs.\\
 
 \par \noindent
{ In many applications, it is important to obtain accurate approximation of the scalar variable and its gradient simultaneously. A common way to achieve this goal is to use mixed finite element methods \cite{ RT:1977,BDM:1985,brezzifortinbook,RT:1991}. Moreover, mixed finite element methods have the property that they maintain the discrete conservation law at the element level.
In \cite{gongyan}, mixed finite element method is used for the approximation of the Dirichlet boundary control problem governed by elliptic PDEs. Therein, the control is sought from $L^2(\partial \Omega)$ and piece-wise constant finite element space is employed for the discretization of control. The authors have obtained the optimal a priori error estimates of order $O(h^{1-1/s})$ with $s \geq 2$ for polygonal domains and quasi optimal error estimates of order $O(h|ln\,h|)$ for smooth domains. } 
In this article, we follow the energy space based approach for the error analysis concerning the second order Dirichlet boundary control problem. This approach produces a sufficiently regular control. For the variational formulation, the state equation is converted to the mixed system using the mixed variational scheme for second order elliptic equations and then the continuous optimality system is derived. The control is sought from the $H^1(\Omega)$ space and it satisfies the Neumann problem.  {The state equation needs to be understood in the very weak sense when the control is sought in $L^2(\Gamma)$ space \cite{38}, since the trace operator maps $H^1(\Omega)$ onto $H^{1/2}(\Gamma)$ where $\Gamma$ represents the boundary of the domain $\Omega$. Hence the normal derivative of the costate appears on the boundary of the domain in the first order optimality condition which makes the problem more complicated due to its discontinuity in the standard finite element methods. Thus, the use of mixed finite element methods is advantageous as it inserts naturally the normal derivative of costate on the boundary in the weak formulation. In order to discretize the continuous optimality system, we use the lowest order Raviart-Thomas space to numerically approximate the state and costate variables whereas the continuous piece-wise linear finite element space is used for the discretization of control. Based on this formulation, we have derived the optimal \emph{a priori} error estimates for the control of order $O(h)$ in the energy norm and of order $O(h^2)$ in $L^2$-norm. We have also achieved the optimal error estimates of order $O(h)$ for the state, costate and gradient of the costate in $L^2$-norm.} Moreover, we have developed a reliable and efficient \emph{a posteriori} error estimator using an auxiliary system of equations. Finally, numerical experiments are presented to illustrate the theoretical results on \emph{a priori} as well as \emph{a posteriori} error analysis. We remark here that the approach adopted in this article for analyzing the mixed finite element method for the Dirichlet boundary control problem differs significantly than that of the article \cite{gongyan}.\\

\par\noindent
The rest of the article is arranged as follows. In Section \ref{secdbcp}, we discuss the Dirichlet boundary control problem in a precised manner and pose the mixed variational scheme for it. Therein, we also obtain the corresponding optimality system and deduce the regularity of the optimal variables. In  Section \ref{secfem}, we introduce some useful notations and preliminary results, formulate the mixed finite element method for the continuous Dirichlet boundary control problem and obtain the discrete optimality system.  We discuss \emph{a priori} error estimates for the optimal control in the energy norm and $L^2$-norm in Section \ref{secapriori}. In Section \ref{secapost}, we develop \emph{a posteriori} error analysis with the help of an auxiliary problem and the Helmholtz decomposition.  Therein, we derive reliability and efficiency estimates for the proposed a posteriori error estimator. We conclude the article in Section \ref{secnumer} by presenting numerical test results supporting the theoretical findings.

\section{Continuous Problem}\label{secdbcp}

\noindent
Let $\Omega \subset \R^2$ be a bounded convex domain with polygonal boundary $\partial \Omega=\Gamma.$ We assume the boundary $\Gamma$  to be the union of line segments $\Gamma_i (1 \leq i \leq k)$ such that their interiors are pairwise disjoint in the induced topology. We use the standard notation $W^{s,p}(\Omega)$ for Sobolev spaces on $\Omega$ with norm $\|\cdot\|_{s,p,\Omega}$ and semi-norm $|\cdot|_{s,p,\Omega}$(see \cite{adams}). Further, we denote $W^{s,2}(\Omega)$ by $H^s(\Omega)$ with norm $\|\cdot\|_{s, \Omega}$ and semi-norm $|\cdot|_{s,\Omega}$.  Let $(\cdot, \cdot)$ (resp. $\|\cdot\|$) denote the $L^2(\Omega)$ inner product (respectively norm). We denote  $H^{-{1}/{2}}( \Gamma )$ and $H^{{1}/{2}}( \Gamma )$ duality pairing by $\langle \cdot, \cdot \rangle$.  

Define the cost functional $J(s,g):=\frac{1}{2}\|s-y_d\|^2+\frac{\alpha}{2}\|\nabla g\|^2$ and  consider the following Dirichlet boundary control  problem governed by the second order elliptic PDE:
\begin{equation}\label{b1}
    \min_{(s,g)\in \cK} J(s,g)
\end{equation}
 subject to 

 \begin{align}
 -\Delta s &=f \ \ \text{in}\ \  \Omega, \label{b2}\\
 \hspace{5mm}   s &=g \ \ \text{on} \ \ \Gamma,\label{b21}
 \end{align}
where $y_d,f\in L^2(\Omega)$ are given functions and $\cK$ is the admissible space (to be specified later), while $\alpha>0$ is a fixed parameter (referred as regularization parameter). 
 In this article, we adopt a mixed finite element technique to solve the Dirichlet boundary control problem \eqref{b1}-\eqref{b21}.  By introducing the flux variable, $\textbf{k}=-\nabla s$,  \eqref{b2}  can be written in the following mixed form: 
 \begin{align}\label{bb4}
     \begin{cases}
       \textbf{k}&= -\nabla s, \\
      \text{div}\,\textbf{k} & = f .
     \end{cases}
 \end{align}
 Set $V=H(\text{div}, \Omega)$, $W=L^2(\Omega)$, $Q=H^1(\Omega)$, where
  the space $H(\text{div}, \Omega)$ with $\|\cdot\|_{H(\text{div}, \Omega)}$ norm  is defined as 
 \begin{align*}
     H(\text{div}, \Omega)&:= \{\textbf{v}\in (L^2(\Omega))^2, \  \text{div}\, \textbf{v} \in L^2(\Omega) \}, \\
     \|\textbf{v}\|_{H(\text{div},\Omega)} &:= \|\textbf{v}\|+ \|\text{div}\,\textbf{v}\|.
 \end{align*}
Next, we introduce the trace operator  defined on $H(\text{div}, \Omega)$ \cite[Lemma 1.1, Chapter III]{brezzifortinbook}.
 \begin{lemma}\label{l1}
  There exists a trace operator $\gamma : H(\text{div},\Omega) \rightarrow H^{-\frac{1}{2}}(\Gamma)$ defined by $\gamma(\mathbf{v})=\mathbf{v}\cdot n$ in the sense that
 \begin{align*}
 \langle \mathbf{v}\cdot n,w \rangle=\int_{\Omega} w \,\text{div}\,\mathbf{v} \,dx+\int_{\Omega}\mathbf{v}\cdot\nabla w\,dx\;\;\forall \ w\in H^1(\Omega).\label{note}
 \end{align*}
 The operator $\gamma$ is a continuous mapping from $H(\text{div}, \Omega)$ onto $H^{-\frac{1}{2}}(\Gamma)$ such that
 \begin{equation*}
     \|\mathbf{v}\cdot n\|_{-\frac{1}{2},\Gamma} \leq \|\mathbf{v}\|_{H(\text{div}, \Omega)}.
 \end{equation*}
 \end{lemma}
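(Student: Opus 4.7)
The plan is to build the trace operator by a density-plus-duality argument, using the standard Green's identity for smooth vector fields and the surjectivity of the usual trace map $H^1(\Omega)\to H^{1/2}(\Gamma)$.

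First, I would establish the identity for smooth data. For $\mathbf{v}\in C^\infty(\bar\Omega)^2$ and $w\in H^1(\Omega)$, the classical divergence theorem yields
\begin{equation*}
\int_\Gamma (\mathbf{v}\cdot n)\, w\,ds = \int_\Omega w\,\text{div}\,\mathbf{v}\,dx + \int_\Omega \mathbf{v}\cdot \nabla w\,dx.
\end{equation*}
For general $\mathbf{v}\in H(\text{div},\Omega)$, I would then define $\gamma(\mathbf{v})\in H^{-1/2}(\Gamma)$ by duality: given $\phi\in H^{1/2}(\Gamma)$, use the continuous right inverse of the $H^1$-trace to pick $w_\phi\in H^1(\Omega)$ with $w_\phi|_\Gamma=\phi$ and $\|w_\phi\|_{1,\Omega}\le C\|\phi\|_{1/2,\Gamma}$, and set
\begin{equation*}
\langle \gamma(\mathbf{v}),\phi\rangle := \int_\Omega w_\phi\,\text{div}\,\mathbf{v}\,dx+\int_\Omega \mathbf{v}\cdot\nabla w_\phi\,dx.
\end{equation*}

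Next I would check that this definition is independent of the choice of extension. If $w_1,w_2$ are two extensions of $\phi$, then $w_1-w_2\in H^1_0(\Omega)$, and I must show the right-hand side vanishes on $H^1_0(\Omega)$. Since $C_c^\infty(\Omega)$ is dense in $H^1_0(\Omega)$, it suffices to test against $\varphi\in C_c^\infty(\Omega)$, where $\int_\Omega \varphi\,\text{div}\,\mathbf{v}\,dx=-\int_\Omega \mathbf{v}\cdot\nabla\varphi\,dx$ is the definition of the distributional divergence, so the two terms cancel. Continuity of $\gamma$ is then immediate from Cauchy--Schwarz:
\begin{equation*}
|\langle \gamma(\mathbf{v}),\phi\rangle| \le \|w_\phi\|\,\|\text{div}\,\mathbf{v}\|+\|\mathbf{v}\|\,\|\nabla w_\phi\| \le \|\mathbf{v}\|_{H(\text{div},\Omega)}\|w_\phi\|_{1,\Omega},
\end{equation*}
and infimizing over extensions produces the stated bound $\|\mathbf{v}\cdot n\|_{-1/2,\Gamma}\le \|\mathbf{v}\|_{H(\text{div},\Omega)}$.

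For surjectivity onto $H^{-1/2}(\Gamma)$, given any $\mu\in H^{-1/2}(\Gamma)$ I would produce a preimage by solving a Neumann problem: let $u\in H^1(\Omega)$ be the unique weak solution of $-\Delta u+u=0$ in $\Omega$ with $\partial u/\partial n=\mu$ on $\Gamma$, which is well posed for every $\mu\in H^{-1/2}(\Gamma)$ by the Lax--Milgram lemma. Setting $\mathbf{v}=\nabla u$ gives $\mathbf{v}\in L^2(\Omega)^2$ and $\text{div}\,\mathbf{v}=\Delta u=u\in L^2(\Omega)$, so $\mathbf{v}\in H(\text{div},\Omega)$, and a direct comparison of the defining identity with the weak formulation of the Neumann problem shows $\gamma(\mathbf{v})=\mu$.

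The main obstacle is the well-definedness step, where one must pass from the classical Green's identity to a definition that only involves the pairing of $w\in H^1(\Omega)$ with $\text{div}\,\mathbf{v}\in L^2(\Omega)$ and $\mathbf{v}\in L^2(\Omega)^2$; the extension-independence check is the crux, and everything else (continuity, the norm bound, and surjectivity) follows cleanly from standard trace and lifting results together with Lax--Milgram.
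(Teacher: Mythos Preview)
Your proof is correct and follows the standard density-plus-duality construction of the normal trace on $H(\text{div},\Omega)$. The paper does not actually prove this lemma; it simply quotes the result from \cite[Lemma~1.1, Chapter~III]{brezzifortinbook}, so there is no ``paper's own proof'' to compare against beyond noting that the argument in Brezzi--Fortin is essentially the one you wrote. One small remark: the bound $\|\mathbf{v}\cdot n\|_{-1/2,\Gamma}\le \|\mathbf{v}\|_{H(\text{div},\Omega)}$ with constant~$1$ requires that $\|\phi\|_{1/2,\Gamma}$ be defined as the quotient norm $\inf\{\|w\|_{1,\Omega}:w|_\Gamma=\phi\}$; with any other equivalent norm on $H^{1/2}(\Gamma)$ you would pick up a constant, so you might make that convention explicit.
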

\par \noindent
 In view of \eqref{bb4}, the weak mixed formulation of \eqref{b2}-\eqref{b21} reads as follows:
 find $(s, \mathbf{k}) \in W \times V$ such that 
 \begin{align}
       (\textbf{k}, \textbf{v}) - (s, \text{div}\, \textbf{v}) &= - \langle \textbf{v} \cdot n,g \rangle \quad \forall ~ \textbf{v} \in V ,\label{b5} \\ 
     (w, \text{div}\, \textbf{k}) &= (f, w) \quad \forall~  w  \in W.  \label{bb5}
 \end{align}
\par \noindent
{ There exists an unique solution $(s,\mathbf{k})$ of \eqref{b5}-\eqref{bb5} (we refer \cite{brezzifortinbook} for details)}.
We now define the solution operator $S$ as follows:
 \begin{defi}
For a given $f\in W$ and $g\in H^{\frac{1}{2}}(\Gamma)$, the solution operator $S:W\times H^{\frac{1}{2}}(\Gamma) \rightarrow W\times V$ is defined by $S(f,g)=(s,\mathbf{k})$, where $(s,\mathbf{k})$ satisfy \eqref{b5}-\eqref{bb5}.
\end{defi}
The mixed form of the Dirichlet boundary optimal control problem consists of finding $(y,\textbf{p},u)\in W \times V \times Q$ such that 
{\begin{eqnarray}\label{b7}
J(y,u) = \min_{(s,g) \in W \times Q} J(s,g),
\end{eqnarray}
subject to the condition that $(s,\textbf{k})=S(f,g)$ where $J(s,g)$ is the underlying cost functional.} 

\par \noindent
Let   $a(\cdot, \cdot): H^1(\O)\times H^1(\O) \rightarrow \mathbb{R}$ denote the bilinear form defined by
\begin{equation*}
    a(v, w)=\int_{\O}\nabla v \cdot \nabla w\,dx.
\end{equation*}
We have the following result concerning the first order necessary optimality conditions.
\begin{theorem}
There exists a unique solution $(y,\textbf{p},u) \in W \times V \times Q$ of the optimal control problem \eqref{b7}. {Furthermore, the triplet $(y,\textbf{p},u)$ is the solution of \eqref{b7} iff there exists a unique pair $(z,\textbf{r}) \in W \times V$ such that $(y,\textbf{p},z,\textbf{r},u)$ satisfies the following optimality system}: 
\begin{align}
(\mathbf{p},\mathbf{v})-(y,\text{div}\,\mathbf{v})&=-\langle \mathbf{v}\cdot n,u\rangle \ \ \forall \  \mathbf{v}\in V, \label{b10}\\
(w,\text{div}\,\mathbf{p})&=(f,w)\ \ \forall\  w\in W, \label{b11} \\
(\mathbf{r},\mathbf{v})-(z,\text{div}\,\mathbf{v})&=0 \ \ \forall \ \mathbf{v}\in V , \label{b12}\\
(w,\text{div}\,\mathbf{r})&=(y-y_d,w) \ \ \forall\  w\in W , \label{b13}\\
\alpha a(u, q)&=-\langle \mathbf{r}\cdot n,q\rangle \ \  \forall \  q\in Q \label{b14}.
\end{align}
\begin{proof}
The proof follows from the standard arguments as in \cite{trolzbook,chowdhurykkt}.
\end{proof}
\end{theorem}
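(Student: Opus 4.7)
The plan is to work with the reduced cost functional $j(g) := J(S_1(f,g), g)$, where $S_1$ denotes the scalar component of the solution operator $S$, and use direct arguments in the calculus of variations, followed by a duality computation involving the adjoint state to convert the derivative into boundary form.

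First I would establish existence and uniqueness of a minimizer. By linearity of the mixed system \eqref{b5}-\eqref{bb5}, the map $g\mapsto S(f,g)$ is affine continuous from $Q$ into $W\times V$, so $j$ is continuous and convex on $Q$. Strict convexity is the delicate point because the regularization $\tfrac{\alpha}{2}\|\nabla g\|^2$ is only a seminorm: if two minimizers $g_1,g_2$ gave the same value at the midpoint, the parallelogram law in $L^2$ would force both $\nabla(g_1-g_2)=0$ and $S_1(f,g_1)=S_1(f,g_2)$; the first says $g_1-g_2$ is a constant $c$, and then by linearity $S_1(f,g_1)-S_1(f,g_2)\equiv c$ in $\Omega$, which forces $c=0$. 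Coercivity is obtained by splitting $g = c_g + g_0$ with $g_0$ of zero mean: the Poincaré inequality controls $\|g_0\|$ by $\|\nabla g\|$, and the constant part $c_g$ is then controlled by the tracking term $\tfrac{1}{2}\|S_1(f,g)-y_d\|^2$ because $S_1(f,c_g+g_0) = c_g + S_1(f,g_0)$. Standard direct method (weak lower semicontinuity on a minimizing sequence) then delivers a unique minimizer $u\in Q$, and we set $(y,\mathbf{p})=S(f,u)$.

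Next I would derive the optimality system. Pick $q\in Q$, compute $S(f,u+\varepsilon q) = (y,\mathbf{p}) + \varepsilon (\tilde y,\tilde{\mathbf{p}})$ where $(\tilde y,\tilde{\mathbf{p}}) = S(0,q)$, and differentiate at $\varepsilon=0$:
\begin{equation*}
j'(u)(q) = (y-y_d,\tilde y) + \alpha\,a(u,q).
\end{equation*}
To rewrite the first term, introduce the adjoint pair $(z,\mathbf{r})\in W\times V$ as the unique mixed-method solution of \eqref{b12}-\eqref{b13}, whose existence and uniqueness again follow from the classical mixed theory in \cite{brezzifortinbook}. Using \eqref{b13} with $w=\tilde y$ gives $(y-y_d,\tilde y) = (\tilde y,\text{div}\,\mathbf{r})$; testing the sensitivity equation $(\tilde{\mathbf{p}},\mathbf{v}) - (\tilde y,\text{div}\,\mathbf{v}) = -\langle \mathbf{v}\cdot n,q\rangle$ with $\mathbf{v}=\mathbf{r}$ yields
\begin{equation*}
(\tilde y,\text{div}\,\mathbf{r}) = (\tilde{\mathbf{p}},\mathbf{r}) + \langle \mathbf{r}\cdot n,q\rangle,
\end{equation*}
and finally testing \eqref{b12} with $\mathbf{v}=\tilde{\mathbf{p}}$ combined with $\text{div}\,\tilde{\mathbf{p}}=0$ (the sensitivity has zero source) gives $(\mathbf{r},\tilde{\mathbf{p}})=0$. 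Substituting back, $(y-y_d,\tilde y) = \langle\mathbf{r}\cdot n,q\rangle$, so $j'(u)(q)=0$ becomes exactly \eqref{b14}. The converse (sufficiency) is immediate since $j$ is convex.

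The main obstacle I expect is the existence/uniqueness step, specifically justifying coercivity and strict convexity cleanly in the presence of the merely semi-norm regularization $\|\nabla g\|^2$; all manipulations in the optimality system derivation are algebraic and rely only on the trace identity of Lemma \ref{l1} and the definitions. An additional point requiring a brief justification is that $\mathbf{r}\cdot n\in H^{-1/2}(\Gamma)$ by Lemma \ref{l1}, so the duality pairing $\langle\mathbf{r}\cdot n,q\rangle$ in \eqref{b14} is well defined for every $q\in Q=H^1(\Omega)$.
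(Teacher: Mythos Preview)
Your proposal is correct and follows precisely the standard route the paper defers to in \cite{trolzbook,chowdhurykkt}: reduce to the control-to-state map, argue coercivity and strict convexity of the reduced functional (handling the semi-norm regularization via the splitting into mean and mean-free parts), apply the direct method, and then recover \eqref{b14} by introducing the adjoint pair $(z,\mathbf{r})$ and performing the duality computation with the sensitivity state. The only minor remark is that your strict convexity argument and the identity $S_1(0,c)=c$ implicitly use the uniqueness of the mixed solution, which you should state once for clarity; otherwise the argument is complete and matches the intended one.
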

\par \noindent
 Note that,  {$(u,q)$} when restricted to the boundary $\Gamma$ belong to  $H^{\frac{1}{2}}(\Gamma) \times H^{\frac{1}{2}}(\Gamma)$ since $(u, q) \in Q \times Q $. Therefore, in light of Lemma \ref{l1}, $\langle \mathbf{v}\cdot n,u\rangle$ and  $\langle \mathbf{r}\cdot n,q\rangle$ arising in equations \eqref{b10} and \eqref{b14} respectively are well defined.
\begin{remark}
In view of \cite[Remark 2.4]{chowdhurykkt}, the minimum energy in the minimization problem \eqref{b7} can be realized with an equivalent $H^{\frac{1}{2}}(\Gamma)$ { semi-norm} of the control $u$ in the sense that
the $H^{\frac{1}{2}}(\Gamma)$ semi-norm of $q \in H^{\frac{1}{2}}(\Gamma)$ can be defined by the following Dirichlet norm:
\begin{equation*}
    |q|_{\frac{1}{2},\Gamma} := \|\nabla z_q\|= \min_{w\in Q,w=q\  \text{on} \  \Gamma } \|\nabla w\|,
\end{equation*}
where the minimizer $z_q \in Q$ satisfies 
\begin{align*}
    -\Delta z_q&=0 \ \text{in} \ \Omega, \\
    z_q &=q \ \text{on} \ \Gamma.
\end{align*}
Uniqueness of the optimal control $u\in Q$ implies $u=z_u$ and hence 
\begin{equation*}
    |u|_{\frac{1}{2},\Gamma}:= \|\nabla u\|.
\end{equation*}
\end{remark}
\noindent
Now using Green's formula in \eqref{b14}, we see that $u$ weakly solves the following Neumann problem
\begin{equation}\label{b16}
    \begin{cases}
      -\Delta u = 0 \ \ \text{in} \ \ \Omega,\\
      \alpha \frac{\partial u}{\partial n} = - \mathbf{r}\cdot n \ \ \text{on} \ \ \Gamma.
    \end{cases}
\end{equation}
{Taking $q=1$ in \eqref{b14}, we observe that $\langle \mathbf{r}\cdot n,1\rangle=0$, which is the compatibility condition for the Neumann problem defined by \eqref{b16}.
Since $y-y_d \in W$, from the elliptic regularity theory on convex polygonal domains \cite[Theorem 3.1.2.1]{grisvardbook}, we find that $(z, \mathbf{r})\in H^2(\Omega)\times [H^1(\Omega)]^2$. By the trace theorem \cite[Theorem 1.5.2.1]{grisvardbook}, $\mathbf{r} \cdot n \in H^{\frac{1}{2}}(\Gamma_i) ~~ \forall ~~ 1\leq i \leq k.$ Hence the elliptic regularity theory for the Neumann problem \cite[Theorem 3.1.2.3]{grisvardbook} implies that $u \in H^2(\Omega).$ Now $f \in W$ and trace  $u \in H^{\frac{3}{2}}(\Gamma_{i}) ~~ \forall ~~ 1\leq i \leq k$, therefore $y\in H^2(\O)$ by the elliptic regularity for the Dirichlet problem on the convex polygonal domains \cite[Theorem 3.1.2.1]{grisvardbook}.}
\section{Finite element approximation}\label{secfem}
\noindent
In this section, we consider the mixed
finite element approximation of the Dirichlet boundary optimal control problem under consideration. We introduce the following notations, which are used throughout the article.
\begin{itemize}
	\item $\cT_h$ is a regular triangulation(see \cite{ciarletbook}) of $\Omega$.
	\item $\cE^i_h$ is the set of all interior edges of $\cT_h$.
	\item $\cE^b_h$ is the set of all boundary edges of $\cT_h$.
	\item $\cE_h= \cE^i_h \cup \cE^b_h$ is the set of all the edges of $\cT_h$.
	 \item $\cV_h^i \text{ is the set of all vertices interior to } \Omega$.
    \item $\cV_h^b \text{ is the set of all vertices that belong to } \Gamma$.
	\item $\cV_h = \cV_h^i \cup \cV_h^b  \text{ is the set of all vertices of }\cT_h$.
	\item $\cT_{p}$ denotes the set of all elements sharing the vertex $p\in \cV_h$ and $|\cT_p|$ denotes cardinality of $\cT_{p}$.
	\item $h_T$ is the diameter of an element $T\in \cT_h$ and $h= \displaystyle \max_{T\in \cT_h} h_T$.
	\item $|e|$ is the length of an edge $e \in \cE_h$ and $m_e$ is the midpoint of $e$.
	\item $\mathbb{P}_k(T)$ is the set of all polynomials of degree less than or equal to $k$ over $T$.
	\item $\mathbb{P}_{k+1}^c(\cT_h)$ is the set of continuous piecewise polynomials of degree $k+1$.
	\item $H^s(\Omega,\cT_h)=\{v\in L^2(\Omega): v|_T\in H^s(T) ~\forall ~ T\in \cT_h\}$ is the piece-wise Sobolev space with respect to the triangulation $\cT_h$.\\
	\item  $| \cdot |_{s,h}^2= \sum_{T \in \cT_h} |\cdot|^2_{s,T}$ is the semi-norm on $H^s(\Omega,\cT_h)$.
	\item We say $a \lesssim b$ if there {exists} a positive constant $C$ independent of the mesh parameter $h$ such that $a\leq C b$.
\end{itemize}
 Let $e \in \cE_h^i$ be the edge shared by two neighbouring triangles $T_+$ and $T_-$ i.e. $e= \partial T_+ \cap \partial T_-$. Further, suppose $n_+$ is the unit normal of $e$ pointing from $T_+$ to $T_-$, and $n_- = -n_+$. We denote $v|_{T_+} $ by $v_+$ and $v|_{T_-}$ by $v_-$. For any scalar valued function $v \in H^2(\Omega, \cT_h)$ and $w \in H^1(\Omega, \cT_h)$, we define the jumps $\sjump{\cdot}$ across the edge $e$ as follows:-

  $$ \jump {\frac{\partial v}{\partial n}}= \frac{\partial v_+}{\partial n_+}\Big|_{e}- \frac{\partial v_-}{\partial n_+}\Big|_e \quad \quad \text{and} \quad \quad \sjump{w}= \left( w_+\Big|_e- w_-\Big|_e\right).$$
For any edge $e \in \cE_h^b$, there is a triangle  $T \in \cT_h$ such that $e=\partial T \cap \Gamma$ and let $n_e$ be the unit normal on $e$ that points outside $T$. For any $v \in H^2(\Omega, \cT_h)$ and $w \in H^1(\Omega, \cT_h)$, we set $$\jump{\frac{\partial v}{\partial n}}= \frac{\partial v|_T}{\partial n_e} \quad \quad \text{and} \quad \quad \sjump{w}=w|_T.$$
For $T \in \cT_h$ , we define the lowest order local Raviart-Thomas space by 
\begin{equation*}
    RT(T)=P_0(T)^2 + \textbf{x}\,P_0(T),
\end{equation*}
where $\mathbf{x}=(x_1, x_2) \in \R^2$.
Now the global Raviart-Thomas space is defined by
\begin{equation*}
    V_h:=\{\textbf{v} \in V: \textbf{v}|_T \in RT(T) \ \forall \  T \in \cT_h\}.
\end{equation*}
 We define two more discrete spaces for the approximation of state $y$ and control $u$ as follows
\begin{equation*}
\begin{split}
    W_h &:=\{w_h \in W: w_h|_T \in P_0(T) \ \forall \ T \in \cT_h \},\\ 
    Q_h &:=\{ q_h \in Q: q_h|_T \in P_1(T) \ \forall \ T \in \cT_h   \}.
    \end{split}
\end{equation*}
Consider the mixed finite element approximation to the variational problem  \eqref{b5}-\eqref{bb5}:
find $(s_h, \mathbf{k_h}) \in W_h \times V_h $ such that
\begin{align}
(\mathbf{k_h},\mathbf{v_h})-(s_h,\text{div}\,\mathbf{v_h})&=-\langle \mathbf{v_h}\cdot n,g\rangle \ \ \forall \  \mathbf{v_h}\in V_h, \label{c11} \\
(w_h,\text{div}\,\mathbf{k_h})&=(f,w_h)\ \ \forall\  w_h\in W_h. \label{cc11}
\end{align}
Here onwards, we shall use $C$ to denote the generic positive constant which can assume different values at different appearances, it is independent of the discrete solutions and will depend only on the minimum angle of the triangulation but not on the mesh size. \\ 
We have the following well established \emph{a priori} estimate for standard mixed finite element approximation \cite{vidarthomee,douglas,GR:1996}.
\begin{lemma}\label{l2}
Let $(s,\mathbf{k}) \in W \times V$ be the solution of problems \eqref{b5}-\eqref{bb5} with data $(g,f) \in H^{\frac{3}{2}}(\Gamma) \times W$ and let $(s_h, \mathbf{k_h}) \in W_h \times V_h$ be the solution of \eqref{c11}-\eqref{cc11}. Then, $s \in H^2(\Omega)$ and
\begin{equation*}
    \|\mathbf{k}-\mathbf{k_h}\|+ \|s-s_h\| \leq Ch\left(\|s\|_{2, \Omega}+\|f\|\right).
\end{equation*}
Furthermore, assuming $\text{div} \mathbf{k} \in H^1(\Omega)$, the following estimate holds
\begin{equation*}
    \|\text{div}(\mathbf{k}-\mathbf{k_h})\|_{-s,\Omega}  \leq Ch^{1+s} |\text{div}\mathbf{k}|_{1, \Omega}, \quad s=0,1.
\end{equation*}


\end{lemma}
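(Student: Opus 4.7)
The plan is to split the argument into three stages: first establish the $H^2$-regularity of $s$, then derive the energy-type estimates for $\mathbf{k}-\mathbf{k_h}$ and the divergence error by exploiting the canonical Raviart-Thomas interpolant together with the Galerkin orthogonality coming from subtracting \eqref{c11}--\eqref{cc11} from \eqref{b5}--\eqref{bb5}, and finally recover the $L^2$-estimate for $s-s_h$ (and the negative-norm estimate for the divergence) by an Aubin-Nitsche duality argument.

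For the regularity, since $f\in L^2(\Omega)$, $g\in H^{3/2}(\Gamma)$ and $\Omega$ is a convex polygon, Grisvard's elliptic regularity (already cited in Section \ref{secdbcp}) yields $s\in H^2(\Omega)$ with $\|s\|_{2,\Omega}\lesssim \|f\|+\|g\|_{3/2,\Gamma}$. From here I would introduce the lowest-order Raviart-Thomas interpolant $\Pi_h:H(\text{div},\Omega)\cap (L^p(\Omega))^2\to V_h$ (for some $p>2$, which is available since $\mathbf{k}=-\nabla s\in (H^1(\Omega))^2$) and the $L^2$-projection $P_h:W\to W_h$ onto piecewise constants. The two ingredients I will lean on are the commuting diagram property $\text{div}(\Pi_h\mathbf{v})=P_h(\text{div}\,\mathbf{v})$, together with the standard approximation estimates $\|\mathbf{v}-\Pi_h\mathbf{v}\|\lesssim h\,|\mathbf{v}|_{1,\Omega}$ and $\|w-P_hw\|\lesssim h\,|w|_{1,\Omega}$.

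Subtracting discrete from continuous gives the error equations $(\mathbf{k}-\mathbf{k_h},\mathbf{v_h})-(s-s_h,\text{div}\,\mathbf{v_h})=0$ for all $\mathbf{v_h}\in V_h$ (this uses $\langle \mathbf{v_h}\cdot n,g\rangle$ on both sides, which is legitimate because $g\in H^{1/2}(\Gamma)$) and $(w_h,\text{div}(\mathbf{k}-\mathbf{k_h}))=0$ for all $w_h\in W_h$. The second equation already gives $\text{div}\,\mathbf{k_h}=P_h\,\text{div}\,\mathbf{k}$, so the divergence error reduces to an $L^2$-projection error, and the case $s=0$ of the second displayed estimate is immediate. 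For the flux estimate, I would write $\mathbf{k}-\mathbf{k_h}=(\mathbf{k}-\Pi_h\mathbf{k})+(\Pi_h\mathbf{k}-\mathbf{k_h})$, test the first error equation with $\mathbf{v_h}=\Pi_h\mathbf{k}-\mathbf{k_h}\in V_h$, and observe that the divergence of $\mathbf{v_h}$ lies in $W_h$, so that $(s-s_h,\text{div}\,\mathbf{v_h})=(s-P_hs,\text{div}\,\mathbf{v_h})=0$. This produces $\|\Pi_h\mathbf{k}-\mathbf{k_h}\|\le \|\mathbf{k}-\Pi_h\mathbf{k}\|$ and hence $\|\mathbf{k}-\mathbf{k_h}\|\lesssim h\,|\mathbf{k}|_{1,\Omega}\lesssim h\,\|s\|_{2,\Omega}$.

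For the scalar $L^2$-bound $\|s-s_h\|$ and the negative-norm bound $\|\text{div}(\mathbf{k}-\mathbf{k_h})\|_{-1,\Omega}$, I would invoke duality. Given $\phi\in L^2(\Omega)$, let $(\psi,\boldsymbol{\sigma})\in W\times V$ solve the adjoint mixed problem with right-hand side $\phi$ and homogeneous boundary data; the convexity of $\Omega$ guarantees $\psi\in H^2$ and $\boldsymbol{\sigma}\in (H^1)^2$ with the appropriate regularity bound. Testing $s-s_h$ against $\phi=s-s_h$ (respectively the divergence error against an $H^1_0$ test) and inserting $(\Pi_h\boldsymbol{\sigma},P_h\psi)$ through the error equations yields $\|s-s_h\|\lesssim h\,(\|\mathbf{k}-\mathbf{k_h}\|+\|s-s_h-(\text{something in }W_h)\|)$, which closes to $O(h)$ after using the flux bound already obtained; the same pattern, with a test function from $H^1_0(\Omega)$, produces the $s=1$ negative-norm bound. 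The main technical obstacle I anticipate is the careful bookkeeping in the duality step: one needs to arrange the test against $\Pi_h\boldsymbol{\sigma}$ so that the uncontrolled term $(s-s_h,\text{div}(\boldsymbol{\sigma}-\Pi_h\boldsymbol{\sigma}))$ is actually zero (using $\text{div}\circ \Pi_h=P_h\circ\text{div}$ and the $L^2$-orthogonality against $W_h$), which is the point where the commuting diagram property really earns its keep.
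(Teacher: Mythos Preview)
Your proposal is correct and follows the standard textbook argument (commuting diagram for $\Pi_h$ and $P_h$, Galerkin orthogonality, Aubin--Nitsche duality) that the paper itself does not reproduce but simply defers to the references \cite{vidarthomee,douglas,GR:1996}. One small slip in your write-up: the chain $(s-s_h,\text{div}\,\mathbf{v_h})=(s-P_hs,\text{div}\,\mathbf{v_h})=0$ is not valid as written, since $s_h\neq P_hs$ in general; the correct reason this term vanishes is that $\text{div}(\Pi_h\mathbf{k}-\mathbf{k_h})=P_h(\text{div}\,\mathbf{k})-P_h(\text{div}\,\mathbf{k})=0$, which you already have the ingredients for.
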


\noindent
Analogous to the continuous setting, we define the discrete solution operator.
\begin{defi}
For a given $f\in W$ and $g\in H^{\frac{1}{2}}(\Gamma)$, the discrete solution operator $S_h:W\times H^{{1}/{2}}(\Gamma) \rightarrow W_h\times V_h$ is defined by $S_h(f,g)=(s_h,\mathbf{k_h})$, where $(s_h,\mathbf{k_h})$ satisfy \eqref{c11}-\eqref{cc11}.
\end{defi}
\noindent The mixed finite element approximation of the Dirichlet boundary control problem \eqref{b7} consists of finding $(y_h, \mathbf{p_h}, u_h)\in W_h\times V_h\times Q_h$ such that
{
\begin{equation} \label{c1}
     J(y_h,u_h)=\min_{(s_h,g_h)\in W_h\times Q_h} J(s_h,g_h),
\end{equation}
subject to the condition that $(s_h,\mathbf{k_h})=S_h(f, g_h)$ where $J(s_h,g_h):=\frac{1}{2}\|s_h-y_d\|^2+\frac{\alpha}{2}\|\nabla g_h\|^2.$}\\\\
In the next theorem, we obtain the discrete optimality system. This system of equations is obtained by the first order necessary optimality conditions as in the case of continuous system \eqref{b10}-\eqref{b14}.
\begin{theorem}
There exists a unique solution $(y_h,\mathbf{p_h},u_h) \in W_h \times V_h \times Q_h$ of the optimal control problem \eqref{c1}. {Furthermore, $(y_h,\mathbf{p_h},u_h)$ is the solution of \eqref{c1} if and only if there exists a unique pair $(z_h,\mathbf{r_h}) \in W_h \times V_h$ such that $(y_h,\mathbf{p_h},z_h,\mathbf{r_h},u_h)$ satisfies the following system of equations}: 
\begin{align}
(\mathbf{p_h},\mathbf{v_h})-(y_h,\text{div}\,\mathbf{v_h})&=-\langle \mathbf{v_h}\cdot n,u_h\rangle \ \ \forall \  \mathbf{v_h}\in V_h, \label{c4}\\
(w_h,\text{div}\,\mathbf{p_h})&=(f,w_h)\ \ \forall\  w_h\in W_h, \label{c5} \\
(\mathbf{r_h},\mathbf{v_h})-(z_h,\text{div}\,\mathbf{v_h})&=0 \ \ \forall \ \mathbf{v_h}\in V_h , \label{c6}\\
(w_h,\text{div}\,\mathbf{r_h})&=(y_h-y_d,w_h) \ \ \forall\  w_h\in W_h , \label{c7}\\
\alpha a( u_h, q_h)&=-\langle \mathbf{r_h}\cdot n,q_h\rangle \ \  \forall \  q_h\in Q_h \label{c8}.
\end{align}
\end{theorem}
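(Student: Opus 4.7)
The plan is to first establish existence and uniqueness of the discrete optimal control by reducing \eqref{c1} to a finite-dimensional strictly convex minimization over $Q_h$, and then to derive the optimality system by introducing costate variables and computing the Gateaux derivative of the reduced cost.

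For each $g_h \in Q_h$, the discrete mixed system \eqref{c11}-\eqref{cc11} admits a unique solution $(s_h,\mathbf{k_h}) = S_h(f,g_h) \in W_h \times V_h$, so the control-to-state map is well defined and affine in $g_h$. I would therefore introduce the reduced functional
\[
\tilde{J}(g_h) := J(s_h(g_h), g_h) = \tfrac{1}{2}\|s_h(g_h) - y_d\|^2 + \tfrac{\alpha}{2}\|\nabla g_h\|^2,
\]
which is continuous and convex. To obtain strict convexity and coercivity on $Q_h$, I would decompose $g_h = c + \tilde{g}_h$ with $c \in \R$ and $\int_\Omega \tilde{g}_h = 0$; the Poincar\'e--Wirtinger inequality bounds $\|\tilde{g}_h\|_{1,\Omega}$ by $\|\nabla \tilde{g}_h\|$, while the constant part is controlled by the tracking term, using that $(c,\mathbf{0})$ solves the mixed system with forcing $0$ and boundary data $c$, so by linearity $s_h(g_h) = s_h^0 + c + s_h(\tilde{g}_h)$ and $\|s_h(g_h) - y_d\|$ grows linearly in $|c|$ once $\tilde{g}_h$ is bounded. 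Existence and uniqueness of $u_h \in Q_h$ then follow from the direct method in this finite-dimensional setting.

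Once $u_h$ is obtained, set $y_h := s_h(u_h)$ and $\mathbf{p_h}$ the associated flux, so that \eqref{c4}-\eqref{c5} hold by construction. Define $(z_h,\mathbf{r_h}) \in W_h \times V_h$ as the unique solution of the discrete mixed adjoint system \eqref{c6}-\eqref{c7}, which is well-posed by the same Brezzi theory used for \eqref{c11}-\eqref{cc11}. To obtain \eqref{c8}, I would compute the Gateaux derivative of $\tilde{J}$ at $u_h$ in direction $q_h \in Q_h$: letting $(y'_h,\mathbf{p'_h}) \in W_h \times V_h$ denote the sensitivity satisfying \eqref{c4}-\eqref{c5} with $f$ replaced by $0$ and $u_h$ by $q_h$, the first-order condition reads
\[
0 = \alpha\, a(u_h,q_h) + (y_h - y_d, y'_h).
\]
Testing \eqref{c7} by $w_h = y'_h$, \eqref{c6} by $\mathbf{v_h} = \mathbf{p'_h}$, and the sensitivity equations by $\mathbf{v_h} = \mathbf{r_h}$ and $w_h = z_h$, the intermediate terms involving $(\mathbf{r_h},\mathbf{p'_h})$ and $(z_h,\text{div}\,\mathbf{p'_h})$ cancel, leaving $(y_h - y_d, y'_h) = \langle \mathbf{r_h}\cdot n, q_h\rangle$, which yields \eqref{c8}. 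The converse implication is immediate from convexity of $\tilde J$.

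The main obstacle I expect is the coercivity step: the regularizer $\tfrac{\alpha}{2}\|\nabla g_h\|^2$ is only a semi-norm on $Q_h$, so one must genuinely exploit the tracking term $\|s_h(g_h) - y_d\|^2$ to dominate constant boundary modes—an issue specific to Dirichlet (as opposed to distributed or Neumann) control. The subsequent duality manipulation needed to extract \eqref{c8} is mostly bookkeeping, but it is important to check that the boundary pairing $\langle \mathbf{r_h}\cdot n, q_h\rangle$ is meaningful, which is guaranteed by $\mathbf{r_h} \in V_h \subset H(\text{div},\Omega)$ and $q_h \in Q_h \subset H^1(\Omega)$ together with Lemma \ref{l1}.
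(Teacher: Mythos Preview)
Your proposal is correct and follows precisely the standard Lagrangian/adjoint route that the paper has in mind; the paper itself offers no explicit proof for this theorem, stating only that it is obtained ``by the first order necessary optimality conditions as in the case of continuous system \eqref{b10}--\eqref{b14}'', which in turn is deferred to \cite{trolzbook,chowdhurykkt}. Your treatment of coercivity---splitting off the constant mode and using that $S_h(0,c)=(c,\mathbf{0})$ so the tracking term controls it---is the right way to handle the semi-norm regularizer, and your duality computation yielding $(y_h-y_d,y'_h)=\langle\mathbf{r_h}\cdot n,q_h\rangle$ is exactly the manipulation that appears later in the paper (see the derivation of \eqref{imppa} in Lemma~\ref{pgo}).
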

\par \noindent
Below, we define the Raviart-Thomas interpolation, $L^2$ projection map and their properties which will be useful in the error analysis.
\begin{lemma}\label{rtinter}
There exist linear operators $\Pi_h : V \rightarrow V_h$ and $P_h:W\rightarrow W_h$ ($L^2$ projection onto $W_h$) such that for all $\mathbf{v}\in V$ and $w\in W,$
\begin{align}
    (\text{div}\,(\mathbf{v}-\Pi_h\mathbf{v}),w_h)&=0 \ \ \ \forall \ w_h \in W_h,\label{interr1}\\
     \|\mathbf{v} - \Pi_h \mathbf{v}\|_{s,\Omega} &\leq Ch^{1-s}|\mathbf{v}|_{1,\Omega},  \  \mathbf{v} \in H^1(\Omega) \times H^1(\Omega), \ \ s=0,1,\label{interr2}\\
        \|\text{div}\,(\mathbf{v}-\Pi_h\mathbf{v})\|_{-s,\Omega}& \leq Ch^{1+s}|\text{div}\, \mathbf{v}|_{1,\Omega},   \ \text{div}\,\mathbf{v} \in H^1(\Omega),\ \ s=0,1, \label{inter3}\\
    (w - P_h w , w_h)&=0 \ \ \forall \ w_h \in W_h,\label{inter4}\\
     \|w - P_h w\|_{-s,\Omega} &\leq Ch^{1+s}|w|_{1,\Omega},  \ w \in H^1(\Omega) \ \ s=0,1\label{inter5}. 
\end{align}
\begin{proof}
The readers may refer \cite{vidarthomee,brezzifortinbook} for the proof of this Lemma.
\end{proof}
\end{lemma}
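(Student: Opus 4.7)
The plan is to verify the five properties in sequence by exploiting the canonical definitions of the two operators together with standard scaling-to-reference-element arguments. For the lowest-order Raviart--Thomas space, $\Pi_h\mathbf{v}\in V_h$ is fixed by the edge-normal moments $\int_e (\Pi_h\mathbf{v})\cdot n_e\,ds = \int_e \mathbf{v}\cdot n_e\,ds$ for every $e\in\cE_h$, while $P_hw\in W_h$ is the piecewise constant with $(P_hw)|_T = \frac{1}{|T|}\int_T w\,dx$ on each $T\in\cT_h$. Property (\ref{inter4}) then follows immediately from the definition of the $L^2$-projection. For (\ref{interr1}), I would note that $w_h|_T$ is constant, so elementwise integration by parts gives $(\text{div}(\mathbf{v}-\Pi_h\mathbf{v}),w_h)_T = \int_{\partial T}(\mathbf{v}-\Pi_h\mathbf{v})\cdot n\,w_h\,ds$, which vanishes because $w_h$ is constant on each edge and the edge-normal moments of $\mathbf{v}-\Pi_h\mathbf{v}$ are zero by construction.

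Next, for (\ref{interr2}), I would use the standard Piola-transform/scaling-to-reference-element strategy. On the reference triangle $\hat T$, the operator $\hat\Pi$ is a bounded projection on $H^1(\hat T)^2$ that fixes $P_0(\hat T)^2$, so the Bramble--Hilbert lemma yields $\|\hat{\mathbf{v}}-\hat\Pi\hat{\mathbf{v}}\|_{1,\hat T}\lesssim |\hat{\mathbf{v}}|_{1,\hat T}$. Pulling back through the contravariant Piola transformation and invoking shape regularity of $\cT_h$ then produces (\ref{interr2}) for both $s=0$ and $s=1$. A parallel but easier scaling argument (no Piola transform is needed for $P_h$) combined with Bramble--Hilbert on $P_0$ delivers the $s=0$ case of (\ref{inter5}), namely $\|w-P_hw\|\lesssim h|w|_{1,\Omega}$.

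The identity (\ref{interr1}) together with $\text{div}\,\Pi_h\mathbf{v}\in W_h$ and the uniqueness of the $L^2$-projection gives the commuting diagram $\text{div}\,\Pi_h\mathbf{v} = P_h(\text{div}\,\mathbf{v})$, so the $s=0$ case of (\ref{inter3}) reduces immediately to (\ref{inter5}) applied to $\text{div}\,\mathbf{v}$. The negative-norm cases ($s=1$) in (\ref{inter3}) and (\ref{inter5}) are then handled by a duality argument: for $\phi$ in the unit ball of $H^1(\Omega)$, I would insert $P_h\phi$ using $L^2$-orthogonality to obtain $(w-P_hw,\phi) = (w-P_hw,\phi-P_h\phi) \le \|w-P_hw\|\,\|\phi-P_h\phi\|\lesssim h^2|w|_{1,\Omega}|\phi|_{1,\Omega}$, and passing to the supremum gives $\|w-P_hw\|_{-1,\Omega}\lesssim h^2|w|_{1,\Omega}$; the same trick with $\text{div}\,\mathbf{v}$ in place of $w$ completes (\ref{inter3}).

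The main technical obstacle is the bookkeeping for the contravariant Piola transform in (\ref{interr2}), since $\Pi_h$ commutes not with a generic affine pullback but only with the Piola map; one must carefully verify that both $\|\mathbf{v}-\Pi_h\mathbf{v}\|_{0,T}$ and $|\mathbf{v}-\Pi_h\mathbf{v}|_{1,T}$ scale with the expected powers of $h_T$ under shape regularity of $\cT_h$. Beyond this scaling calculation, every ingredient is classical, which is why the paper itself dispatches the proof by citation to \cite{vidarthomee,brezzifortinbook}.
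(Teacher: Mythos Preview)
Your proposal is correct and lays out precisely the standard argument that the cited references \cite{vidarthomee,brezzifortinbook} contain: edge-moment definition of $\Pi_h$, the commuting diagram $\text{div}\,\Pi_h = P_h\,\text{div}$, Bramble--Hilbert via the Piola transform for (\ref{interr2}), and the duality trick for the negative-norm cases. The paper itself offers no argument beyond the citation, so your sketch is in fact more detailed than the paper's proof while following the same classical route.
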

In the next lemma, we state a well known Cl\'ement type approximation result \cite{clement} required in the subsequent analysis.
\begin{lemma}\label{clement:approx}
Let $\xi\in H^1(\Omega)$. Then there { exists}  $\xi_h \in \mathbb{P}_{k+1}^c(\cT_h)$ such that for any $T\in \cT_h$ and $e \in \partial T$, it holds that
\begin{align*}
\|\xi-\xi_h\|_{L^2(e)}&\leq C|e|^{\frac{1}{2}}\|\nabla \xi\|_{L^2(\Tilde{T})}, \\
h_T|\xi-\xi_h|_{H^1(T)}+\|\xi-\xi_h\|_{L^2(T)}&\leq Ch_{T}\|\nabla \xi\|_{L^2(\Tilde{T})},
\end{align*}
where $\Tilde{T}=\{T'\in \cT_h: T \cap T'\neq \emptyset \}$ and $k$ is any non negative integer.
\end{lemma}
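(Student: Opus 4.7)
The plan is to construct $\xi_h$ via a classical quasi-interpolation operator of Cl\'ement type. For every Lagrange node $p$ of $\mathbb{P}_{k+1}^c(\cT_h)$, let $\omega_p$ be the union of elements containing $p$, and let $\pi_p : L^2(\omega_p) \to \mathbb{P}_{k+1}(\omega_p)$ be the $L^2$-projection onto polynomials of degree $k+1$. I would then set
$$\xi_h := \sum_{p} (\pi_p \xi)(p)\,\phi_p,$$
where $\phi_p$ runs through the corresponding Lagrange basis. Continuity and the correct polynomial degree come directly from the construction; the crucial structural feature to record is a \emph{polynomial preservation} property: if $\xi$ is a polynomial of degree at most $k+1$ on the extended patch $\tilde{T}$, then $\pi_p\xi = \xi$ for each node $p\in T$, so $\xi_h = \xi$ on $T$.

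Next, I would prove the local element estimates. Fixing $T\in\cT_h$, the nodal values $(\pi_p\xi)(p)$ for $p\in T$ are bounded in terms of $\|\xi\|_{L^2(\omega_p)}$ using the inverse estimate on $\mathbb{P}_{k+1}(\omega_p)$; since the basis functions $\phi_p$ are uniformly bounded, this yields the local stability $\|\xi_h\|_{L^2(T)} \lesssim \|\xi\|_{L^2(\tilde{T})}$, with the analogous $H^1$-stability following from a standard inverse inequality. Combining stability with polynomial preservation, the error on $T$ is controlled by
$$\|\xi-\xi_h\|_{L^2(T)} + h_T|\xi-\xi_h|_{H^1(T)} \leq C\inf_{q\in\mathbb{P}_{k+1}}\|\xi-q\|_{L^2(\tilde{T})},$$
and the infimum on the right is in turn bounded by $C h_T\|\nabla\xi\|_{L^2(\tilde{T})}$ through a Bramble-Hilbert / Poincar\'e argument carried out on the reference patch and scaled back to $\tilde{T}$.

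Finally, the edge estimate follows from the scaled trace inequality
$$\|w\|_{L^2(e)}^2 \leq C\bigl(|e|^{-1}\|w\|_{L^2(T)}^2 + |e|\,|w|_{H^1(T)}^2\bigr)$$
applied to $w = \xi-\xi_h$, combined with the two element bounds above and the shape-regularity relation $|e| \simeq h_T$. The main obstacle is justifying the Bramble-Hilbert step with a constant uniform in $h$: this requires a scaling argument on the reference patch together with shape-regularity, which guarantees that the patches $\tilde{T}$ fall into finitely many affine equivalence classes so that the constant $C$ depends only on the minimum angle of $\cT_h$.
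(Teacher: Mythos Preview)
The paper does not give its own proof of this lemma; it merely states the result and refers to Cl\'ement's original paper \cite{clement}. Your proposal is precisely the standard Cl\'ement construction and argument from that reference---local $L^2$-projections at the Lagrange nodes, polynomial preservation plus local stability, a Bramble--Hilbert/Poincar\'e step on the patch, and then the scaled trace inequality for the edge bound---so it is correct and is exactly the content the paper is citing.
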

\noindent
 In the further sections, for any $v\in Q\cap C^0(\bar{\Omega)}$, $v_I\in Q_h$ denotes its nodal Lagrange interpolation. We refer to \cite{ciarletbook,brennerbook} for the definition and approximation properties of $v_I$. 
\section{\emph{A Priori} error analysis}\label{secapriori}
In this section, we discuss the error estimates in the energy norm and $L^2$-norm between the solutions of \eqref{b10}-\eqref{b14} and \eqref{c4}-\eqref{c8}. In deriving the estimates for the control in the energy norm, we first obtain the bound on $H^1$-semi-norm and subsequently the optimal order error estimates in the energy norm is derived with the help of intermediate results. In order to achieve the optimal order convergence in the $L^2$-norm, we introduce the enriched discrete control and make use of some auxiliary results. We begin by proving intermediate results required for the further analysis. \\
For the optimal control ${u}\in Q$, let $(y^h_f(u), \mathbf{p}^h_f(u))=S_h(f,u)$ and { $(z^h_u,\mathbf{r}^h_u)=S_h(y^h_f(u)-y_d,0)$} which means $(y^h_f(u), \mathbf{p}^h_f(u),z^h_u,\mathbf{r}^h_u) \in W_h\times V_h\times W_h\times V_h$ satisfies the following system of equations:
\begin{align}
(\mathbf{p}^h_f(u),\mathbf{v_h})-(y^h_f(u),\text{div}\,\mathbf{v_h})&=-\langle \mathbf{v_h}\cdot n,u\rangle \ \ \forall \  \mathbf{v_h}\in V_h, \label{d1}\\
(w_h,\text{div}\,\mathbf{p}^h_f(u))&=(f,w_h)\ \ \forall\  w_h\in W_h, \label{d2} \\
(\mathbf{r}^h_u,\mathbf{v_h})-(z^h_u,\text{div}\,\mathbf{v_h})&=0 \ \ \forall \ \mathbf{v_h}\in V_h , \label{d3}\\
(w_h,\text{div}\,\mathbf{r}^h_u)&=(y^h_f(u)-y_d,w_h) \ \ \forall\  w_h\in W_h . \label{d4}
\end{align}
Now for optimal state $y \in W$, find $(z^h_y, \mathbf{r}^h_y) \in W_h \times V_h$ such that 
\begin{align}
    (\mathbf{r}^h_y,\mathbf{v_h})-(z^h_y,\text{div}\,\mathbf{v_h})&=0 \ \ \forall \ \mathbf{v_h}\in V_h , \label{d5}\\
(w_h,\text{div}\,\mathbf{r}^h_y)&=(y-y_d,w_h) \ \ \forall\  w_h\in W_h . \label{d6}
\end{align}
We note that $(z^h_y, \mathbf{r}^h_y)=S_h(y-y_d,0)$ and  $(y^h_f(u),\mathbf{p}^h_f(u))=S_h(f,u)$ are the standard mixed finite element approximations of $(z,\mathbf{r})$ and $(y,\mathbf{p})$ respectively. Taking into account the standard error estimates of mixed finite element method (stated in Lemma \ref{l2}), we have the following result \cite{yanpingchen,gongyan}:
\begin{lemma}\label{stdmixed}
Let $(y,\mathbf{p},z,\mathbf{r},u)$ be the solutions of \eqref{b10}-\eqref{b14}. Let $(z^h_y,\mathbf{r}^h_y)$ and $(y^h_f(u), \mathbf{p}^h_f(u))$ be the solution of the auxiliary problem \eqref{d5}-\eqref{d6} and \eqref{d1}-\eqref{d2} respectively. Then, we have the following estimates:
\begin{align*}
\|z-z^h_y\| + \|\mathbf{r}-\mathbf{r}^h_y\|_{H(\text{div},\Omega)} &\leq Ch\|z\|_{2,\Omega} ,\\
\|y-y^h_f(u)\| + \|\mathbf{p}-\mathbf{p}^h_f(u)\|_{H(\text{div},\Omega)} &\leq Ch\|y\|_{2,\Omega}.
\end{align*} 
\end{lemma}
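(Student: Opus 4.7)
The plan is to interpret both auxiliary systems as instances of the standard mixed Poisson formulation \eqref{b5}-\eqref{bb5} with their canonical discretizations \eqref{c11}-\eqref{cc11}, and then invoke Lemma \ref{l2} directly. The analysis for the two pairs is structurally identical, so I would treat them in parallel, focusing first on the $(z,\mathbf{r})$ pair and then noting the minor adjustments for the $(y,\mathbf{p})$ pair.

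For the pair $(z,\mathbf{r})$ versus $(z^h_y,\mathbf{r}^h_y)$, I would first compare \eqref{b12}-\eqref{b13} with \eqref{b5}-\eqref{bb5}, identifying the source as $y-y_d\in W$ and the Dirichlet data as $0\in H^{3/2}(\Gamma)$ (no boundary integral appears in \eqref{b12}); similarly \eqref{d5}-\eqref{d6} is recognized as the matching discrete problem. The $H^2$-regularity $z\in H^2(\Omega)$ was already proved in Section \ref{secdbcp}. Lemma \ref{l2} then yields $\|z-z^h_y\|+\|\mathbf{r}-\mathbf{r}^h_y\|\leq Ch(\|z\|_{2,\Omega}+\|y-y_d\|)$, and since $\mathbf{r}=-\nabla z$ forces $\operatorname{div}\mathbf{r}=-\Delta z$, we may absorb $\|y-y_d\|=\|\Delta z\|$ into $\|z\|_{2,\Omega}$. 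For the divergence contribution to the $H(\text{div},\Omega)$ norm I would exploit the commuting-diagram identity: testing \eqref{d6} against $w_h=\operatorname{div}\mathbf{r}^h_y\in W_h$ together with \eqref{b13} gives $\operatorname{div}\mathbf{r}^h_y=P_h(y-y_d)$ and $\operatorname{div}\mathbf{r}=y-y_d$, so $\operatorname{div}(\mathbf{r}-\mathbf{r}^h_y)=(I-P_h)(y-y_d)$, which is controlled by the second bound in Lemma \ref{l2} or equivalently by \eqref{inter5}.

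For the pair $(y,\mathbf{p})$ versus $(y^h_f(u),\mathbf{p}^h_f(u))$, the systems \eqref{b10}-\eqref{b11} and \eqref{d1}-\eqref{d2} fit the same template with source $f\in W$ and Dirichlet data $u$; since $u\in H^2(\Omega)$ from the regularity analysis of Section \ref{secdbcp}, the trace satisfies $u|_{\Gamma_i}\in H^{3/2}(\Gamma_i)$, so Lemma \ref{l2} applies and the same projection argument now uses $\operatorname{div}\mathbf{p}=f$ and $\operatorname{div}\mathbf{p}^h_f(u)=P_h f$, while $\|f\|=\|\Delta y\|$ is absorbed into $\|y\|_{2,\Omega}$.

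The main obstacle I anticipate is making the divergence part of the $H(\text{div})$-norm bound land at the stated order $O(h)$, because the raw projection estimate $\|(I-P_h)(y-y_d)\|$ a priori only gives $O(1)$ without extra regularity on the data. I would address this either by tacitly invoking the mild assumption $f,y_d\in H^1(\Omega)$ used in the standard mixed-method literature cited just before the lemma, or by absorbing the projection defect into the $H^2$-bounds of $z$ and $y$ through the underlying elliptic PDEs; everything else in the proof is a direct textbook application of Lemma \ref{l2}.
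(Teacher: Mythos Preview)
Your proposal is correct and matches the paper's approach: the paper does not give a detailed proof but simply states that the lemma follows from the standard mixed finite element estimates of Lemma~\ref{l2} and cites \cite{yanpingchen,gongyan}, which is precisely the reduction you carry out. Your observation about the divergence part is apt---the $O(h)$ bound on $\|\operatorname{div}(\mathbf{r}-\mathbf{r}^h_y)\|$ indeed requires $\operatorname{div}\mathbf{r}=y-y_d\in H^1(\Omega)$ (equivalently $y_d\in H^1(\Omega)$, since $y\in H^2(\Omega)$), which is the extra hypothesis flagged in the second part of Lemma~\ref{l2}; the paper tacitly assumes this, consistent with the cited references.
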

\par \noindent
This lemma is useful in establishing the auxiliary estimates which play a key role in deriving the desired error estimates in the energy norm. We first derive the  error estimates in $H^1$ semi-norm in the following theorem.

\begin{theorem}\label{energyestimate}
Let $(y,\mathbf{p}, u, z, \mathbf{r}) \in W \times V \times Q \times W \times V$ be the solution of continuous optimality system \eqref{b10}-\eqref{b14} and let $(y_h, \mathbf{p_h}, u_h, z_h, \mathbf{r_h}) \in W_h \times V_h \times Q_h \times W_h \times V_h$ be the solution of discrete optimality system \eqref{c4}-\eqref{c8}. Then the following estimate holds: 
\begin{equation*}
    |u-u_h|_{1, \Omega}+\|y^h_f(u)-y_h\| \leq Ch\left(\|u\|_{2, \Omega}+\|z\|_{2, \Omega}+\|y\|_{2, \Omega}+\|y-y_d\|\right).
\end{equation*}
\begin{proof}
Upon subtracting the corresponding equations of the system \eqref{c4}-\eqref{c7} from \eqref{d1}-\eqref{d4}, we obtain the following error equations:
\begin{align}
    (\mathbf{p}^h_f(u)-\mathbf{p_h},\mathbf{v_h})-(y^h_f(u)-y_h, \text{div}\,\mathbf{v_h})&= -\langle \mathbf{v_h}\cdot n,u-u_h\rangle \ \forall \ \mathbf{v_h} \in V_h,\label{des1}\\
    (w_h,\text{div}(\mathbf{p}^h_f(u)-\mathbf{p_h}))&=0\ \ \forall\  w_h\in W_h, \label{des2} \\
(\mathbf{r}^h_u-\mathbf{r_h},\mathbf{v_h})-(z^h_u-z_h,\text{div}\,\mathbf{v_h})&=0 \ \ \forall \ \mathbf{v_h}\in V_h , \label{des3}\\
(w_h,\text{div}(\mathbf{r}^h_u-\mathbf{r_h}))&=(y^h_f(u)-y_h,w_h) \ \ \forall\  w_h\in W_h . \label{des4}
\end{align}
By substituting $\mathbf{v_h}= \mathbf{r_h}-\mathbf{r}^h_u\in V_h$ in \eqref{des1} and using \eqref{des2}-\eqref{des4}, we find that
\begin{align}
    \langle(\mathbf{r_h}-\mathbf{r}^h_u)\cdot n, u_h-u\rangle &=(\mathbf{p}^h_f(u)-\mathbf{p_h},\mathbf{r_h}-\mathbf{r}^h_u)-(y^h_f(u)-y_h, \text{div}(\mathbf{r_h}-\mathbf{r}^h_u))\notag\\
    &=-(y^h_f(u)-y_h,y_h-y^h_f(u))\notag\\
    &=\|y_h-y^h_f(u)\|^2.\label{des5}
\end{align}
Let $u_I \in Q_h$ be the nodal Lagrange interpolation of $u$, see \cite[p. 81]{ciarletbook}. Using equations \eqref{b14} and \eqref{c8} for $q=u-u_h$ and $q_h=u_h-u_I$ respectively, we find
\begin{align}
    \alpha a( u, u-u_h)+\langle \mathbf{r}\cdot n,u-u_h\rangle &=0,\label{des6}\\
    \alpha a(u_h, u_h-u_I)+\langle \mathbf{r_h}\cdot n,u_h-u_I\rangle &=0.\label{des7}
\end{align}
From equations \eqref{des5}-\eqref{des7}, we observe that
\begin{align}
        \alpha|u-u_h|^2_{1,\Omega} + \|y^h_f(u)-y_h\|^2&=\alpha a(u, u-u_h)+\langle \mathbf{r}^h_u\cdot n,u-u_h\rangle \notag\\ &\quad- \alpha a(u_h, u-u_h)- \langle \mathbf{r_h}\cdot n,u-u_h\rangle\notag\\
        &=\alpha a(u, u-u_h)+ \langle \mathbf{r}\cdot n,u-u_h\rangle +\langle (\mathbf{r}^h_u-\mathbf{r})\cdot n,u-u_h\rangle \notag\\ 
        &\quad- \alpha a(u_h, u-u_h)- \langle \mathbf{r_h}\cdot n,u-u_h\rangle \notag\\
        &=\langle (\mathbf{r}^h_u-\mathbf{r})\cdot n,u-u_h\rangle +\alpha a(u_h, u_h-u_I)\notag\\&\quad+\alpha a(u_h, u_I-u)- \langle \mathbf{r_h}\cdot n,u-u_I\rangle + \langle \mathbf{r_h}\cdot n,u_h-u_I\rangle\notag\\
        &=\langle (\mathbf{r}^h_u-\mathbf{r})\cdot n,u-u_h\rangle +\alpha a(u_h, u_I-u)+\langle \mathbf{r_h}\cdot n,u_I-u\rangle\notag\\
        &=I+II+III.\label{d10}
\end{align}

We now  estimate each term on the right hand side of \eqref{d10} as follows.
\begin{align}
     I& =\langle (\mathbf{r}^h_u-\mathbf{r}^h_y)\cdot n,u-u_h\rangle +\langle (\mathbf{r}^h_y-\mathbf{r})\cdot n,u-u_h\rangle\notag\\
     &=I^a+I^b.\label{eq:Est1}
\end{align}
Using Lemma \ref{l1}, Lemma \ref{rtinter} and equations \eqref{b13}, \eqref{d6}, we find
\begin{equation*}
\begin{split}
    I^b= \langle (\mathbf{r}^h_y-\mathbf{r})\cdot n,u-u_h\rangle &=\int_{\Omega}\text{div}(\mathbf{r}^h_y-\mathbf{r})\,(u-u_h)\,dx+\int_{\Omega}(\mathbf{r}^h_y-\mathbf{r})\cdot \nabla(u-u_h)\,dx\\
    &=\int_{\Omega}(\text{div}\,\mathbf{r}^h_y)\,(P_h(u-u_h))\,dx-\int_{\Omega}(\text{div}\,\mathbf{r})\,(u-u_h)\,dx\\&\quad
    +\int_{\Omega}(\mathbf{r}^h_y-\mathbf{r})\cdot \nabla(u-u_h)\,dx\\
    &=\int_{\Omega}(y-y_d)\,(P_h(u-u_h)-(u-u_h))\,dx+\int_{\Omega}(\mathbf{r}^h_y-\mathbf{r})\cdot \nabla(u-u_h)\,dx.
\end{split}
\end{equation*}
Now, using Cauchy-Schwarz inequality, Lemma \ref{stdmixed}, Lemma \ref{rtinter} and Young's inequality, we find that
\begin{align}
    I^b&\leq \|y-y_d\|\|P_h(u-u_h)-(u-u_h)\|+\|\mathbf{r}^h_y-\mathbf{r}\||u-u_h|_{1,\Omega}\notag\\
    &\leq Ch\left(\|y-y_d\|+\|z\|_{2,\Omega}\right)|u-u_h|_{1,\Omega}\notag\\
    &\leq C(\delta)h^2\left(\|y-y_d\|^2+\|z\|^2_{2,\Omega}\right)+\delta|u-u_h|_{1,\Omega}^2.\label{d12}
\end{align}
Taking $v_h=\mathbf{r}^h_u-\mathbf{r}^h_y$ in \eqref{d1} and \eqref{c4} and subtract the resulting equations to find
\begin{equation*}
\begin{split}
    \langle (\mathbf{r}^h_u-\mathbf{r}^h_y)\cdot n,u-u_h\rangle&=\left(y^h_f(u)-y_h,\text{div}(\mathbf{r}^h_u-\mathbf{r}^h_y)\right)-\left(\mathbf{p}^h_f(u)-\mathbf{p_h},\mathbf{r}^h_u-\mathbf{r}^h_y\right).
\end{split}
\end{equation*}
In view of the equations \eqref{d1}-\eqref{d6} and  \eqref{c4}-\eqref{c8}, we have 
\begin{equation*}
    \begin{split}
       I^a= \langle (\mathbf{r}^h_u-\mathbf{r}^h_y)\cdot n,u-u_h\rangle &=\left(\text{div}(\mathbf{r}^h_u-\mathbf{r}^h_y),y^h_f(u)-y_h\right)-\left(\mathbf{p}^h_f(u)-\mathbf{p_h},\mathbf{r}^h_u-\mathbf{r}^h_y\right)
        \\
        &=\left(y^h_f(u)-y,y^h_f(u)-y_h\right)-\left(z^h_u-z^h_y,\text{div}(\mathbf{p}^h_f(u)-\mathbf{p_h})\right)\\
        &=\left(y^h_f(u)-y,y^h_f(u)-y_h\right).
    \end{split}
\end{equation*}
Using Cauchy-Schwarz inequality, Young's inequality and Lemma \ref{stdmixed}, we get that
\begin{align}
        I^a & \leq \|y^h_f(u)-y\|\|y^h_f(u)-y_h\| \notag\\
        &\leq C(\delta)\|y^h_f(u)-y\|^2+\delta\|y^h_f(u)-y_h\|^2 \notag\\
        & \leq C(\delta)h^2\|y\|_{2,\Omega}^2 +\delta\|y^h_f(u)-y_h\|^2.\label{d15}
\end{align}
Combining \eqref{eq:Est1}, \eqref{d12} and \eqref{d15}, we find
\begin{equation}\label{d16}
     |I| \leq C(\delta)h^2\left(\|z\|_{2,\Omega}^2+\|y\|_{2,\Omega}^2+\|y-y_d\|^2\right) +\delta\left(\|y^h_f(u)-y_h\|^2+|u-u_h|^2_{1, \Omega}\right).
\end{equation}
Now we proceed to estimate $II$ and $III$.
Substituting $q=u_I-u\in Q$ in \eqref{b14} , we find
\begin{equation}\label{des20}
  \alpha a(u, u_I-u)+\langle \mathbf{r}\cdot n,u_I-u\rangle=0.  
\end{equation}
In view of \eqref{des20}, we can write
\begin{align}
   II+III&=\alpha a(u_h-u, u_I-u)+\langle( \mathbf{r_h}-\mathbf{r})\cdot n,u_I-u\rangle\notag\\
    &=\alpha a(u_h-u, u_I-u)+\langle( \mathbf{r_h}-\mathbf{r}^h_y)\cdot n,u_I-u\rangle\notag\\
    &\quad+\langle( \mathbf{r}^h_y-\mathbf{r})\cdot n,u_I-u\rangle\notag\\
    &=II^a+II^b+II^c.\label{d18}
\end{align}
Using Cauchy-Schwarz inequality, Young's inequality and approximation property of $u_I$ \cite[Theorem 3.1.4]{ciarletbook}, we have 
\begin{align}
   II^a= \alpha a(u_h-u, u_I-u) &\leq C\|\nabla(u_h-u)\|\|\nabla(u_I-u)\| \notag\\
    & \leq \delta|u-u_h|^2_{1,\Omega} + C(\delta)h^2|u|^2_{2,\Omega}.\label{d19}
\end{align}
Again, by Lemma \ref{l1}, Lemma \ref{rtinter} and equations \eqref{b13}, \eqref{d6}, we find that
\begin{equation*}
    \begin{split}
        II^c = \langle( \mathbf{r}^h_y-\mathbf{r})\cdot n,u_I-u\rangle &=\int_{\Omega}\text{div}(\mathbf{r}^h_y-\mathbf{r})\,(u_I-u)\,dx+\int_{\Omega}(\mathbf{r}^h_y-\mathbf{r})\cdot \nabla(u_I-u)\,dx\\
    &=\int_{\Omega}(\text{div}\,\mathbf{r}^h_y)\,(P_h(u_I-u))\,dx-\int_{\Omega}(\text{div}\,\mathbf{r})\,(u_I-u)\,dx\\&\quad
    +\int_{\Omega}(\mathbf{r}^h_y-\mathbf{r})\cdot \nabla(u_I-u)\,dx\\
    &=\int_{\Omega}(y-y_d)\,(P_h(u_I-u)-(u_I-u))\,dx+\int_{\Omega}(\mathbf{r}^h_y-\mathbf{r})\cdot \nabla(u_I-u)\,dx.
    \end{split}
\end{equation*}
Using Cauchy-Schwarz inequality, approximation properties of $u_I$ and Lemma \ref{stdmixed}, we have
\begin{align}
      II^c&\leq \|y-y_d\|\|P_h(u_I-u)-(u_I-u)\|+\|\mathbf{r}^h_y-\mathbf{r}\||u_I-u|_{1,\Omega}\notag\\
    &\leq Ch\left(\|y-y_d\|+\|z\|_{2,\Omega}\right)|u_I-u|_{1,\Omega}\notag\\
    &\leq Ch^2\left(\|y-y_d\|^2+\|z\|^2_{2,\Omega}+\|u\|_{2,\Omega}^2\right).\label{d20}
\end{align}
Finally, we estimate the term $II^b$. For $u_I \in Q_h$, let $(y^h_f(u_I),\mathbf{p}^h_f(u_I))=S_h(f,u_I) \in W_h \times V_h$ satisfies
\begin{align}
        (\mathbf{p}^h_f(u_I),\mathbf{v_h})-(y^h_f(u_I),\text{div}\,\mathbf{v_h})&=-\langle \mathbf{v_h}\cdot n,u_I\rangle \ \forall \ \mathbf{v_h} \in V_h, \label{d23}\\
      (w_h, \text{div}\,\mathbf{p}^h_f(u_I))&= (f,w_h) \ \forall \ w_h \in W_h.\label{d24}
\end{align}
Upon subtracting equations \eqref{d1} and \eqref{d23}, we find
\begin{equation*}
    \langle \mathbf{v_h}\cdot n,u_I-u\rangle =(y^h_f(u_I)-y^h_f(u),\text{div}\,\mathbf{v_h})-(\mathbf{p}^h_f(u_I)-\mathbf{p}^h_u,\mathbf{v_h}) \ \forall \ \mathbf{v_h} \in V_h.
\end{equation*}
Take $\mathbf{v_h}=\mathbf{r_h}-\mathbf{r}^h_y \in V_h$ in the last equation to get
\begin{equation*}
     II^b=   \langle (\mathbf{r_h}-\mathbf{r}^h_y)\cdot n,u_I-u\rangle =(y^h_f(u_I)-y^h_f(u),\text{div}(\mathbf{r_h}-\mathbf{r}^h_y))-(\mathbf{p}^h_f(u_I)-\mathbf{p}^h_u,\mathbf{r_h}-\mathbf{r}^h_y).
\end{equation*}
Using the equations \eqref{c4}-\eqref{c8}, \eqref{d1}-\eqref{d6}, \eqref{d23}-\eqref{d24}, Cauchy-Schwarz inequality and Young's inequality, we find that
\begin{align}
      II^b&=(y_h-y,y^h_f(u_I)-y^h_f(u))+ (z^h_{y}-z_h,\text{div}(\mathbf{p}^h_f(u_I)-\mathbf{p}^h_f(u)))\notag\\
         &=(y_h-y,y^h_f(u_I)-y^h_f(u))\notag\\
         &\leq C(\delta)\|y^h_f(u_I)-y^h_f(u)\|^2 + \delta\|y-y_h\|^2.\label{d25}
\end{align}
\par \noindent
Now, to estimate $\|y^h_f(u_I)-y^h_f(u)\|$, let $(\tilde{\phi}, \mathbf{x})=S(y^h_f(u_I)-y^h_f(u),0) \in W\times V$ i.e. $(\tilde{\phi}, \mathbf{x})$ solves the following system:
\begin{equation*}
    \begin{split}
      (\mathbf{x},\mathbf{v})-  (\tilde{\phi},\text{div}\,\mathbf{v})&=0  \ \  \forall \ \mathbf{v} \in V,\\
        (w, \text{div}\,\mathbf{x})&=(y^h_f(u_I)-y^h_f(u),w) \ \forall \ \  w \in W.
    \end{split}
\end{equation*}
By elliptic regularity theory on convex polygonal domains, we have $\tilde{\phi} \in H^2(\Omega)$ and 
\begin{equation}\label{d27}
    \|\tilde{\phi}\|_{2,\Omega} + \|\mathbf{x}\|_{H(\text{div},\Omega)}\leq C\|y^h_f(u_I)-y^h_f(u)\|.
\end{equation}
 Further, let $(\tilde{\phi}_h, \mathbf{x_h})=S_h(y^h_f(u_I)-y^h_f(u),0) \in W_h \times V_h$ denote the mixed finite element approximation of $(\tilde{\phi}, \mathbf{x})$, we have
\begin{align}
        (\mathbf{x_h},\mathbf{v_h})-(\tilde{\phi} _h,\text{div}\,\mathbf{v_h})&=0  \ \forall \ \mathbf{v_h} \in V_h, \label{d28}\\
        (w_h, \text{div}\,\mathbf{x_h})&=(y^h_f(u_I)-y^h_f(u),w_h) \ \forall \ w_h \in W_h.\label{d29}
\end{align}
Setting $w_h=y^h_f(u_I)-y^h_f(u) \in W_h$ in \eqref{d29}, we get
\begin{equation}
   \|y^h_f(u_I)-y^h_f(u)\|^2=(y^h_f(u_I)-y^h_f(u),\text{div}\,\mathbf{x_h}).
\end{equation}
By taking $\mathbf{v_h}=\mathbf{p}^h_f(u_I)-\mathbf{p}^h_f(u) \in V_h$ in \eqref{d28}, we find
\begin{equation}\label{d31}
    (\tilde{\phi}_h,\text{div}(\mathbf{p}^h_f(u_I)-\mathbf{p}^h_f(u)))-(\mathbf{x_h},\mathbf{p}^h_f(u_I)-\mathbf{p}^h_f(u))=0.
\end{equation}
Now, using equations \eqref{d23}-\eqref{d24}, \eqref{d1}-\eqref{d4}, \eqref{d31}, Cauchy-Schwarz inequality and approximation properties of $u_I$ \cite[Theorem 3.1.4]{ciarletbook} to find
\begin{equation}\label{d32}
    \begin{split}
     \|y^h_f(u_I)-y^h_f(u)\|^2&=(y^h_f(u_I)-y^h_f(u),\text{div}\,\mathbf{x_h})+(\Tilde{\phi}_h,\text{div}(\mathbf{p}^h_f(u_I)-\mathbf{p}^h_f(u)))-(\mathbf{x_h},\mathbf{p}^h_f(u_I)-\mathbf{p}^h_f(u))\\
     &=(y^h_f(u_I)-y^h_f(u),\text{div}\,\mathbf{x_h})-(\mathbf{x_h},\mathbf{p}^h_f(u_I)-\mathbf{p}^h_f(u))\\
     &=\langle \mathbf{x_h} \cdot n,u_I-u\rangle\\
     &\leq \|u-u_I\|_{L^2(\Gamma)}\|\mathbf{x_h}\|_{L^2(\Gamma)}\\
     &\leq Ch^{{3}/{2}}\|u\|_{2, \Omega}\|\mathbf{x_h}\|_{L^2(\Gamma)}.
    \end{split}
\end{equation}
Let $P_h\mathbf{x}$ be the $L^2$ projection of $\mathbf{x}$ from $L^2(\Omega)$ to $W_h \times W_h$. Using the inverse inequality \cite[Section 4.5]{brennerbook}, Lemma \ref{rtinter} and equation \eqref{d27}, we have 
 \begin{align}\label{above}
         \|\mathbf{x_h}\|_{L^2(\Gamma)}&\leq \|\mathbf{x_h}-P_h\mathbf{x}\|_{L^2(\Gamma)}+\|P_h\mathbf{x}-\mathbf{x}\|_{L^2(\Gamma)}+\|\mathbf{x}\|_{L^2(\Gamma)}\notag\\
         &\leq Ch^{-\frac{1}{2}}\|\mathbf{x_h}-P_h\mathbf{x}\|+ Ch^{\frac{1}{2}}\|\mathbf{x}\|_{1,\Omega}+C\|\mathbf{x}\|_{1,\Omega}\notag\\
         &\leq Ch^{-\frac{1}{2}}(\|\mathbf{x_h}-\mathbf{x}\|+\|\mathbf{x}-P_h\mathbf{x}\|)+C\|\mathbf{x}\|_{1,\Omega}\notag\\
         &\leq Ch^{-\frac{1}{2}}h(\|\mathbf{x}\|_{1,\Omega}+\|\Tilde{\phi}\|_{2,\Omega})+C\|\mathbf{x}\|_{1,\Omega}\notag\\
         &\leq C(\|\mathbf{x}\|_{1,\Omega}+\|\Tilde{\phi}\|_{2,\Omega})\notag\\
         &\leq C\|y^h_f(u_I)-y^h_f(u)\|. 
 \end{align}
 Therefore, using the estimates \eqref{above} and \eqref{d32}, we have
 \begin{equation}\label{d34}
     \|y^h_f(u_I)-y^h_f(u)\|\leq Ch^{{3}/{2}}\|u\|_{2, \Omega}.
 \end{equation}
By Lemma \ref{stdmixed} and triangle inequality, we find
 \begin{align}
     \|y-y_h\|&\leq \|y-y^h_f(u)\|+\|y^h_f(u)-y_h\|\notag\\
     &\leq Ch\|y\|_{2, \Omega}+\|y^h_f(u)-y_h\|. \label{d35}
 \end{align}
 Using equations \eqref{d25}, \eqref{d34} and \eqref{d35} to get 
 \begin{equation}\label{d36}
     \begin{split}
         II^b &\leq Ch^2\left(\|u\|^2_{2, \Omega}+\|y\|^2_{2, \Omega}\right) +\delta\|y^h_f(u)-y_h\|^2.
     \end{split}
 \end{equation}
 Combining the equations \eqref{d16}, \eqref{d19}, \eqref{d20}, \eqref{d36} and setting $\delta$ to be small enough, we find that
 \begin{equation*}
     \begin{split}
         |u-u_h|^2_{1,\Omega} + \|y^h_f(u)-y_h\|^2 &\leq Ch^2\left(\|u\|^2_{2,\Omega}+\|z\|^2_{2, \Omega}+\|y\|^2_{2, \Omega}+\|y-y_d\|^2\right).
     \end{split}
 \end{equation*}
 Take square root on both the sides to conclude the proof. 
\end{proof}
\end{theorem}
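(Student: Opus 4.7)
The plan is to exploit the two optimality conditions --- the continuous one $\alpha a(u,q) = -\langle \mathbf{r}\cdot n, q\rangle$ tested with $q = u-u_h$ and the discrete one $\alpha a(u_h,q_h) = -\langle \mathbf{r_h}\cdot n, q_h\rangle$ tested with $q_h = u_h - u_I$ --- together with the auxiliary systems for $(y^h_f(u),\mathbf{p}^h_f(u))$, $(z^h_u,\mathbf{r}^h_u)$ and $(z^h_y,\mathbf{r}^h_y)$, to produce a single coercivity-type identity for $\alpha|u-u_h|^2_{1,\Omega} + \|y^h_f(u)-y_h\|^2$. First, I would subtract the relevant pieces of the discrete optimality system from the auxiliary systems, giving error equations of Brezzi type; testing the flux equation with $\mathbf{v_h} = \mathbf{r_h} - \mathbf{r}^h_u$ and using the divergence relations collapses the boundary pairing to the clean identity $\langle (\mathbf{r_h}-\mathbf{r}^h_u)\cdot n, u_h-u\rangle = \|y_h - y^h_f(u)\|^2$. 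Adding and subtracting $\alpha a(u,u-u_h)$ and $\langle \mathbf{r}\cdot n, u-u_h\rangle$ then splits the right-hand side into three groups: a duality term $\mathrm{I} = \langle (\mathbf{r}^h_u - \mathbf{r})\cdot n, u-u_h\rangle$, a Galerkin-type term $\mathrm{II} = \alpha a(u_h, u_I - u)$, and a boundary term $\mathrm{III} = \langle \mathbf{r_h}\cdot n, u_I - u\rangle$.

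For group $\mathrm{I}$, I would insert $\pm \mathbf{r}^h_y$: the piece $\langle (\mathbf{r}^h_y-\mathbf{r})\cdot n, u-u_h\rangle$ is treated by Lemma \ref{l1} (Green's formula), replacing $\text{div}\,\mathbf{r}$ and $\text{div}\,\mathbf{r}^h_y$ via \eqref{b13} and \eqref{d6} so that the boundary pairing becomes $\int_\Omega (y-y_d)(P_h(u-u_h) - (u-u_h))\,dx + \int_\Omega (\mathbf{r}^h_y - \mathbf{r})\cdot \nabla(u-u_h)\,dx$, which is $\leq Ch\big(\|y-y_d\| + \|z\|_{2,\Omega}\big)|u-u_h|_{1,\Omega}$ by Lemma \ref{stdmixed} and \eqref{inter5}. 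The remaining piece $\langle (\mathbf{r}^h_u - \mathbf{r}^h_y)\cdot n, u-u_h\rangle$ is converted, via testing \eqref{d1} and \eqref{c4} with $\mathbf{v_h} = \mathbf{r}^h_u - \mathbf{r}^h_y$ and using \eqref{d5}--\eqref{d6}, into $(y^h_f(u) - y, y^h_f(u) - y_h)$, which Lemma \ref{stdmixed} and Young's inequality bound by $C(\delta)h^2\|y\|^2_{2,\Omega} + \delta \|y^h_f(u) - y_h\|^2$. Group $\mathrm{II}$ is handled directly by Cauchy--Schwarz and the nodal interpolation estimate, giving $\leq \delta|u-u_h|^2_{1,\Omega} + C(\delta)h^2|u|^2_{2,\Omega}$.

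The main obstacle is group $\mathrm{III}$, written after inserting $\pm \mathbf{r}^h_y$ and $\pm \mathbf{r}$ and cancelling the continuous optimality $\alpha a(u, u_I - u) + \langle \mathbf{r}\cdot n, u_I - u\rangle = 0$, as $\langle (\mathbf{r_h} - \mathbf{r}^h_y)\cdot n, u_I - u\rangle$. This is delicate because $\|u - u_I\|_{L^2(\Gamma)}$ is only $O(h^{3/2})$. To handle it, I would introduce $(y^h_f(u_I),\mathbf{p}^h_f(u_I)) = S_h(f,u_I)$ and, using \eqref{d1}--\eqref{d23} tested with $\mathbf{v_h} = \mathbf{r_h} - \mathbf{r}^h_y$, reduce the term to $(y_h - y, y^h_f(u_I) - y^h_f(u))$. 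Then $\|y^h_f(u_I) - y^h_f(u)\|$ is estimated by a duality (Aubin--Nitsche) argument: let $(\tilde\phi, \mathbf{x}) = S(y^h_f(u_I) - y^h_f(u), 0)$ with elliptic regularity $\|\tilde\phi\|_{2,\Omega} + \|\mathbf{x}\|_{H(\text{div},\Omega)} \lesssim \|y^h_f(u_I) - y^h_f(u)\|$, and its discrete counterpart $(\tilde\phi_h, \mathbf{x_h}) = S_h(y^h_f(u_I) - y^h_f(u), 0)$; testing cleverly gives $\|y^h_f(u_I) - y^h_f(u)\|^2 = \langle \mathbf{x_h}\cdot n, u_I - u\rangle$, and an inverse trace inequality combined with Lemma \ref{rtinter} bounds $\|\mathbf{x_h}\|_{L^2(\Gamma)} \lesssim \|y^h_f(u_I) - y^h_f(u)\|$, yielding $\|y^h_f(u_I) - y^h_f(u)\| \lesssim h^{3/2}\|u\|_{2,\Omega}$.

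Finally, combining all the estimates in the master identity, absorbing the small $\delta|u-u_h|^2_{1,\Omega}$ and $\delta\|y^h_f(u) - y_h\|^2$ contributions into the left-hand side by choosing $\delta$ small, and taking square roots yields the claimed bound. The whole argument is essentially one long Strang-type lemma tailored to the boundary control setting, and the only technically subtle step is the duality bound on $\|y^h_f(u_I) - y^h_f(u)\|$, which is where the regularity $u \in H^2(\Omega)$ is genuinely used.
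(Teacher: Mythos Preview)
Your proposal is correct and follows essentially the same route as the paper's proof: the same coercivity identity $\alpha|u-u_h|^2_{1,\Omega}+\|y^h_f(u)-y_h\|^2 = I+II+III$, the same splitting of $I$ via $\pm\mathbf{r}^h_y$, the same Green's-formula treatment of the $(\mathbf{r}^h_y-\mathbf{r})\cdot n$ pieces, and the same Aubin--Nitsche duality bound $\|y^h_f(u_I)-y^h_f(u)\|\lesssim h^{3/2}\|u\|_{2,\Omega}$ for the delicate boundary term. One minor point of presentation: your stated bound for $II=\alpha a(u_h,u_I-u)$ is actually the bound for $\alpha a(u_h-u,u_I-u)$, i.e.\ it already uses the cancellation from $\alpha a(u,u_I-u)+\langle\mathbf{r}\cdot n,u_I-u\rangle=0$ that you only invoke later for $III$; the paper makes this explicit by first combining $II+III$ and then splitting into $II^a,II^b,II^c$, which also produces an extra $\langle(\mathbf{r}^h_y-\mathbf{r})\cdot n,u_I-u\rangle$ term that you implicitly absorb but do not name.
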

\begin{Cor} The following hold:
\begin{equation*}
\begin{split}
     \|y-y_h\| & \leq Ch\left(\|u\|_{2,\Omega}+\|z\|_{2,\Omega}+\|y\|_{2, \Omega}+\|y-y_d\|\right),\\
     \|\mathbf{r-r_h}\|& \leq Ch\left(\|u\|_{2,\Omega}+\|z\|_{2,\Omega}+\|y\|_{2, \Omega}+\|y-y_d\|\right),\\
     \|z-z_h\| & \leq Ch\left(\|u\|_{2,\Omega}+\|z\|_{2,\Omega}+\|y\|_{2, \Omega}+\|y-y_d\|\right).
\end{split}
\end{equation*}
\begin{proof}
Using the triangle inequality, Lemma \ref{stdmixed} and Theorem \ref{energyestimate}, we have
\begin{align} 
        \|y-y_h\|&\leq \|y-y^h_f(u)\|+\|y^h_f(u)-y_h\|\notag\\
        &\leq Ch\left(\|u\|_{2,\Omega}+\|z\|_{2,\Omega}+\|y\|_{2, \Omega}+\|y-y_d\|\right).\label{eq:Esty}
\end{align}
In view of \eqref{c6}-\eqref{c7} and \eqref{d5}-\eqref{d6}, we have the following system
\begin{align}
    (\mathbf{r}^h_y-\mathbf{r_h}, \mathbf{v_h})-(z^h_y-z_h, \text{div}\,\mathbf{v_h})&=0 \ \ \forall \ \mathbf{v_h} \in V_h,\label{babus}\\
    (w_h, \text{div}\,(\mathbf{r}^h_y-\mathbf{r_h}))&=(y-y_h, w_h) \ \ \forall \ w_h  \in W_h.\label{brez}
\end{align}
Using discrete Babuska-Brezzi condition \cite[Chapter II]{brezzifortinbook} to find
\begin{equation}\label{serty}
    \|z^h_y-z_h\|\leq C\|\mathbf{r}^h_y-\mathbf{r_h}\|.
\end{equation}
Substituting $\mathbf{v_h}=\mathbf{r}^h_y-\mathbf{r_h}$ in \eqref{babus} and then using \eqref{brez} to get
\begin{equation*}
    \|\mathbf{r}^h_y-\mathbf{r_h}\|^2=(y-y_h, z^h_y-z_h).
\end{equation*}
A use of Cauchy-Schwarz inequality and \eqref{serty}  gives
\begin{equation*}
    \|\mathbf{r}^h_y-\mathbf{r_h}\|\leq C\|y-y_h\|.
\end{equation*}
Now, using Lemma \ref{stdmixed} and \eqref{eq:Esty}, we find
\begin{equation*}
\begin{split}
    \|\mathbf{r-r_h}\|&\leq  \|\mathbf{r}-\mathbf{r}^h_y\|+ \|\mathbf{r}^h_y-\mathbf{r_h}\|\\
    & \leq C\left(h\,\|z\|_{2,\Omega}+\|y-y_h\|\right)\\
    &\leq Ch\left(\|u\|_{2,\Omega}+\|z\|_{2,\Omega}+\|y\|_{2, \Omega}+\|y-y_d\|\right).
\end{split}
\end{equation*}
Finally, by using Lemma \ref{stdmixed} and \eqref{serty}, we get
\begin{equation*}
    \begin{split}
        \|z-z_h\|& \leq \|z-z^h_y\|+\|z^h_y-z_h\|\\
        &\leq C\left(h\,\|z\|_{2, \Omega}+ \|y-y_h\|\right)\\
        &\leq Ch\left(\|u\|_{2,\Omega}+\|z\|_{2,\Omega}+\|y\|_{2, \Omega}+\|y-y_d\|\right).
    \end{split}
\end{equation*}
This concludes the proof of this Lemma.
\end{proof}
\end{Cor}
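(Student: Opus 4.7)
The plan is to split each error via triangle inequality using the standard mixed finite element approximations $(y^h_f(u),\mathbf{p}^h_f(u))$ and $(z^h_y,\mathbf{r}^h_y)$ as intermediate quantities. The first bound, on $\|y-y_h\|$, is essentially immediate: write $\|y-y_h\|\le \|y-y^h_f(u)\|+\|y^h_f(u)-y_h\|$, control the first term by Lemma \ref{stdmixed} (which gives $Ch\|y\|_{2,\Omega}$), and the second term by Theorem \ref{energyestimate}. Summing yields the claimed bound.

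For $\|\mathbf{r}-\mathbf{r_h}\|$ and $\|z-z_h\|$, the plan is to subtract the discrete costate system \eqref{c6}--\eqref{c7} from the auxiliary system \eqref{d5}--\eqref{d6} to obtain a saddle-point style error system for $(z^h_y-z_h,\mathbf{r}^h_y-\mathbf{r_h})$ whose right-hand side involves $y-y_h$. Specifically one has $(\mathbf{r}^h_y-\mathbf{r_h},\mathbf{v_h})-(z^h_y-z_h,\text{div}\,\mathbf{v_h})=0$ for all $\mathbf{v_h}\in V_h$ and $(w_h,\text{div}(\mathbf{r}^h_y-\mathbf{r_h}))=(y-y_h,w_h)$ for all $w_h\in W_h$. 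Next, I would invoke the discrete Babu\v{s}ka--Brezzi (inf-sup) condition for the Raviart--Thomas/piecewise-constant pair to extract $\|z^h_y-z_h\|\le C\|\mathbf{r}^h_y-\mathbf{r_h}\|$. Then choosing $\mathbf{v_h}=\mathbf{r}^h_y-\mathbf{r_h}$ in the first equation and applying the second equation with $w_h=z^h_y-z_h\in W_h$ gives $\|\mathbf{r}^h_y-\mathbf{r_h}\|^2=(y-y_h,z^h_y-z_h)$, whence Cauchy--Schwarz combined with the inf-sup bound delivers $\|\mathbf{r}^h_y-\mathbf{r_h}\|\le C\|y-y_h\|$.

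Finally, a triangle inequality finishes both remaining estimates: $\|\mathbf{r}-\mathbf{r_h}\|\le \|\mathbf{r}-\mathbf{r}^h_y\|+\|\mathbf{r}^h_y-\mathbf{r_h}\|\le Ch\|z\|_{2,\Omega}+C\|y-y_h\|$ using Lemma \ref{stdmixed} and the previous step, and similarly $\|z-z_h\|\le \|z-z^h_y\|+\|z^h_y-z_h\|\le Ch\|z\|_{2,\Omega}+C\|y-y_h\|$. Plugging in the bound on $\|y-y_h\|$ already established then yields the stated estimates.

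I do not expect any substantial obstacle here, since all the ingredients are already in place; the only delicate step is the application of the discrete inf-sup condition to transfer control from the flux error $\mathbf{r}^h_y-\mathbf{r_h}$ to the scalar error $z^h_y-z_h$, but this is standard for the Raviart--Thomas pair and is exactly the tool that converts the auxiliary saddle-point error system into a usable scalar bound.
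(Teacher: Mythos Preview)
Your proposal is correct and follows essentially the same approach as the paper's own proof: the same triangle-inequality splittings via $y^h_f(u)$ and $(z^h_y,\mathbf{r}^h_y)$, the same subtracted saddle-point system, the discrete inf-sup bound $\|z^h_y-z_h\|\le C\|\mathbf{r}^h_y-\mathbf{r_h}\|$, and the test-function choice $\mathbf{v_h}=\mathbf{r}^h_y-\mathbf{r_h}$ together with $w_h=z^h_y-z_h$ to obtain $\|\mathbf{r}^h_y-\mathbf{r_h}\|\le C\|y-y_h\|$.
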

We now proceed to obtain some relations which will be helpful in establishing the energy norm estimates for the control.
Let $(y_f,\mathbf{p}_f)=S(f,0)$ and $(y_u, \mathbf{p}_u)=S(0,u)$. By the linearity of the solution operator $S$ and uniqueness of $(y, \mathbf{p})$, we have $y=y_f+y_u$ and $\mathbf{p}=\mathbf{p}_f+\mathbf{p}_u$. From the elliptic regularity theory for convex polygon domains \cite[Theorem 3.1.2.1]{grisvardbook}, we have $y_u\in H^2(\Omega).$ Using the integration by parts and standard density arguments, we find that
\begin{equation}\label{auxx1}
    a(y_u, t)=0 \ \ \forall \ \ t\in H^2(\Omega) \cap H^1_0(\Omega).
\end{equation}
Using the equation \eqref{b14}, we have
\begin{equation}\label{auxx2}
    a(u, t)=0 \ \ \forall \ t\in H^1_0(\Omega).
\end{equation}
In view of the equations \eqref{auxx1}, \eqref{auxx2} and the Poincare inequality, we find that $y_u=u.$ Similarly, we can write $y_h=y^h_f+y^h_{u_h}$ and $\mathbf{p_h}=\mathbf{p}^h_f+\mathbf{p}^h_{u_h}$ where $(y^h_f,\mathbf{p}^h_f)=S_h(f,0)$ and $(y^h_{u_h}, \mathbf{p}^h_{u_h})=S_h(0,u_h).$\\
We prove the consistency result in the following lemma, yielding the perturbed Galerkin orthogonality which plays a key role in deriving the error estimates in the energy as well as $L^2$-norms. 
\begin{lemma}\label{pgo}
For any $q_h \in Q_h$,  let $(y^h_{q_h}, \mathbf{p}^h_{q_h})=S_h(0, q_h)$. Then, it holds that
 \begin{equation*}
    \alpha a(u-u_h, q_h)+(y_u-y^h_{u_h}, y^h_{q_h})=(y_f^h-y_f,y^h_{q_h})+\langle  (\mathbf{r}^h_y-\mathbf{r})\cdot n,q_h\rangle. 
\end{equation*}
\begin{proof}
Let $q_h \in Q_h$, we have $(y^h_{q_h}, \mathbf{p}^h_{q_h})\in W_h\times V_h$ satisfies
\begin{align}
    (\mathbf{p}^h_{q_h},\mathbf{v_h})-(y^h_{q_h},\text{div}\,\mathbf{v_h})&=-\langle \mathbf{v_h}\cdot n,q_h\rangle \ \ \forall \  \mathbf{v_h}\in V_h, \label{l2errr1} \\
(w_h,\text{div}\,\mathbf{p}^h_{q_h})&=0\ \ \forall\  w_h\in W_h. \label{l2errr2}
\end{align}
From \eqref{c8}, we have that $\alpha a(u_h, q_h)=-\langle \mathbf{r_h}\cdot n,q_h \rangle$. Taking $\mathbf{v_h}=\mathbf{r_h}$ in \eqref{l2errr1} and then using the equations \eqref{c6}, \eqref{c7} and \eqref{l2errr2}, we find
\begin{equation*}
\begin{split}
   -\langle \mathbf{r_h}\cdot n ,q_h\rangle &=(\mathbf{p}^h_{q_h},\mathbf{r_h})-(y^h_{q_h},\text{div}\,\mathbf{r_h}) \\
   &=(z_h, \text{div}\,\mathbf{p}^h_{q_h})-(y_h-y_d, y^h_{q_h} )\\
   &=-(y_h-y_d,y^h_{q_h})=-(y^h_f+y^h_{u_h}-y_d, y^h_{q_h}).
\end{split}
\end{equation*}
  Therefore, we have
\begin{equation}\label{imppa}
    \alpha a(u_h, q_h)+(y^h_{u_h}, y^h_{q_h})=(y_d-y^h_f, y^h_{q_h}).
\end{equation}
{Let $(z_{y_u},\mathbf{r}_{y_u})=S(y_u,0),$ $(z_{y_f},\mathbf{r}_{y_f})=S(y_f-y_d,0)$ and their  discrete approximations are $(z^h_{y_u},\mathbf{r}^h_{y_u})=S_h(y_u,0)$ and $(z^h_{y_f},\mathbf{r}^h_{y_f})=S_h(y_f-y_d,0)$, respectively. Using the definitions of $S$ and $S_h$, we have
\begin{align}
    (y_u, y^h_{q_h})&=(y_u, y^h_{q_h}-q_h)+(y_u, q_h)
    =(\text{div}\,\mathbf{r}_{y_u}, y^h_{q_h}-q_h)+(y_u, q_h)\notag\\
    &=(\text{div}\,\mathbf{r}_{y_u}, y^h_{q_h}-P_h q_h)+(\text{div}\,\mathbf{r}_{y_u}, P_h q_h-q_h)+(y_u, q_h)\notag\\
    &=(\text{div}\,\mathbf{r}^h_{y_u}, y^h_{q_h}-P_h q_h)+(\text{div}\,\mathbf{r}_{y_u}, P_h q_h-q_h)+(y_u, q_h),\label{1098}
\end{align}
and 
\begin{align}
    (y_f-y_d, y^h_{q_h})&= (y_f-y_d, y^h_{q_h}-q_h)+(y_f-y_d, q_h)
    =(\text{div}\,\mathbf{r}_{y_f}, y^h_{q_h}-q_h)+(y_f-y_d, q_h)\notag\\
    &=(\text{div}\,\mathbf{r}_{y_f}, y^h_{q_h}-P_h q_h)+(\text{div}\,\mathbf{r}_{y_f},P_h q_h-q_h)+(y_f-y_d, q_h)\notag\\
    &=(\text{div}\,\mathbf{r}^h_{y_f},y^h_{q_h}-P_h q_h)+(\text{div}\,\mathbf{r}_{y_f},P_h q_h-q_h)+(y_f-y_d, q_h).\label{2098}
\end{align}
Adding \eqref{1098} and \eqref{2098} and using the facts that $y=y_u+y_f$, $\text{div}\,\mathbf{r}=\text{div}(\mathbf{r}_{y_u}+\mathbf{r}_{y_f})$ and $\text{div}\,\mathbf{r}^h_{y}=\text{div}(\mathbf{r}^h_{y_u}+\mathbf{r}^h_{y_f})$, we find that
\begin{equation*}
    \begin{split}
        (y-y_d, y^h_{q_h})&=(\text{div}\,\mathbf{r}^h_y, y^h_{q_h}-P_h q_h)+(\text{div}\,\mathbf{r},P_h q_h-q_h)+(y-y_d, q_h)\\
        &=(\text{div}\,\mathbf{r}^h_y, y^h_{q_h})-(\text{div}\,\mathbf{r}^h_y, P_h q_h)+(\text{div}\,\mathbf{r}, P_h q_h)
         -(\text{div}\,\mathbf{r}, q_h)+(y-y_d, q_h)\\
        &=(\text{div}\,\mathbf{r}^h_y, y^h_{q_h}),
    \end{split}
\end{equation*}
where in obtaining the last equation, we have used \eqref{b13} together with the fact that $(\text{div}(\mathbf{r}-\mathbf{r}^h_y),P_h q_h)=0$. Using the definitions of $(y^h_{q_h}, \mathbf{p}^h_{q_h})\in W_h\times V_h,$  $(z^h_y, \mathbf{r}^h_y)\in W_h\times V_h$ and \eqref{b14}, we find
\begin{equation*}
    \begin{split}
      (y-y_d, y^h_{q_h})  &=(\text{div}\,\mathbf{r}^h_y, y^h_{q_h})
      =(\mathbf{p}^h_{q_h}, \mathbf{r}^h_y)+\langle \mathbf{r}^h_y\cdot n,q_h\rangle\\
      &=(z^h_y, \text{div}\,\mathbf{p}^h_{q_h})+\langle(\mathbf{r}^h_y-\mathbf{r})\cdot n,q_h\rangle+\langle \mathbf{r}\cdot n,q_h\rangle\\
      &=\langle (\mathbf{r}^h_y-\mathbf{r})\cdot n,q_h\rangle-\alpha a(u, q_h).
    \end{split}
\end{equation*}
Therefore, we have 
\begin{equation}\label{imppb}
    \alpha a(u, q_h)+(y_u, y^h_{q_h})=-(y_f-y_d, y^h_{q_h})+\langle (\mathbf{r}^h_y-\mathbf{r})\cdot n,q_h\rangle.
\end{equation}
Upon subtracting \eqref{imppa} from \eqref{imppb}, we get the desired result.}
\end{proof}
\end{lemma}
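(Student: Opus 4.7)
The plan is to derive two identities — one purely discrete, one continuous-to-discrete — and then subtract them. The discrete identity will be obtained by testing \eqref{c8} against $q_h$ and rewriting $-\langle \mathbf{r_h}\cdot n, q_h\rangle$ via the auxiliary system for $(y^h_{q_h}, \mathbf{p}^h_{q_h})$; the continuous identity will use \eqref{b14} plus a bridge lemma relating the continuous data $y-y_d$ to the discrete divergence $\text{div}\,\mathbf{r}^h_y$ when paired with $y^h_{q_h}$.

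\textbf{Discrete side.} I would start from $\alpha a(u_h, q_h)=-\langle \mathbf{r_h}\cdot n, q_h\rangle$ and choose $\mathbf{v_h}=\mathbf{r_h}$ in \eqref{l2errr1}, obtaining $-\langle \mathbf{r_h}\cdot n,q_h\rangle=(\mathbf{p}^h_{q_h},\mathbf{r_h})-(y^h_{q_h},\text{div}\,\mathbf{r_h})$. Using \eqref{c6}-\eqref{c7} to replace each factor, and noting that $(z_h,\text{div}\,\mathbf{p}^h_{q_h})=0$ by \eqref{l2errr2}, the right-hand side collapses to $-(y_h-y_d,y^h_{q_h})$. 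Splitting $y_h=y^h_f+y^h_{u_h}$ (by linearity of $S_h$) gives
\begin{equation*}
\alpha a(u_h, q_h)+(y^h_{u_h}, y^h_{q_h})=(y_d-y^h_f, y^h_{q_h}).
\end{equation*}

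\textbf{Continuous side.} The key bridge identity I would prove is
\begin{equation*}
(y-y_d, y^h_{q_h})=(\text{div}\,\mathbf{r}^h_y, y^h_{q_h}).
\end{equation*}
To do this, split $y=y_u+y_f$ and introduce the continuous costates $(z_{y_u},\mathbf{r}_{y_u})=S(y_u,0)$ and $(z_{y_f},\mathbf{r}_{y_f})=S(y_f-y_d,0)$, plus their discrete counterparts. In each pairing $(y_u,y^h_{q_h})$ and $(y_f-y_d, y^h_{q_h})$, write $y^h_{q_h}=(y^h_{q_h}-P_hq_h)+P_hq_h+(q_h-q_h)$ and use the divergence defining relations for $\mathbf{r}_{y_u}, \mathbf{r}_{y_f}$. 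The Raviart-Thomas commuting property \eqref{interr1} forces $(\text{div}(\mathbf{r}_{y_u}-\mathbf{r}^h_{y_u}), P_h q_h)=0$ (and similarly for $y_f$), and the $L^2$-projection orthogonality \eqref{inter4} kills the remaining leftover terms against $q_h\in Q_h\not\subset W_h$. Adding the two and using $\mathbf{r}^h_y=\mathbf{r}^h_{y_u}+\mathbf{r}^h_{y_f}$ yields the bridge identity. Then I would test \eqref{l2errr1} with $\mathbf{v_h}=\mathbf{r}^h_y$ and use \eqref{d5}-\eqref{d6}, \eqref{l2errr2} to rewrite $(\text{div}\,\mathbf{r}^h_y, y^h_{q_h})=\langle \mathbf{r}^h_y\cdot n, q_h\rangle$, then add and subtract $\langle \mathbf{r}\cdot n, q_h\rangle$ and invoke \eqref{b14} to obtain
\begin{equation*}
\alpha a(u, q_h)+(y_u, y^h_{q_h})=-(y_f-y_d, y^h_{q_h})+\langle(\mathbf{r}^h_y-\mathbf{r})\cdot n, q_h\rangle.
\end{equation*}

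\textbf{Conclusion.} Subtracting the discrete identity from the continuous one gives the stated formula after observing that $(y_d-y_f, y^h_{q_h})-(y_d-y^h_f, y^h_{q_h})=(y^h_f-y_f, y^h_{q_h})$.

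The main obstacle is the bridge identity $(y-y_d, y^h_{q_h})=(\text{div}\,\mathbf{r}^h_y, y^h_{q_h})$. Its proof is delicate precisely because $y^h_{q_h}\in W_h$ is a discrete object tested against the continuous pair $(y-y_d)$, and the continuous costate $\mathbf{r}$ generally differs from $\mathbf{r}^h_y$; getting this to work demands inserting the $L^2$-projection $P_h q_h$ at the right moment and exploiting both the commuting diagram property of the Raviart-Thomas projection and the piecewise-constant nature of $W_h$. The rest of the argument is a bookkeeping exercise in the mixed equations.
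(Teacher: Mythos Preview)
Your proposal is correct and follows essentially the same route as the paper: derive the discrete identity $\alpha a(u_h,q_h)+(y^h_{u_h},y^h_{q_h})=(y_d-y^h_f,y^h_{q_h})$, establish the bridge identity $(y-y_d,y^h_{q_h})=(\text{div}\,\mathbf{r}^h_y,y^h_{q_h})$ via the $P_hq_h$ insertion and the splitting through $\mathbf{r}_{y_u},\mathbf{r}_{y_f}$, convert this to the continuous identity using \eqref{l2errr1} with $\mathbf{v_h}=\mathbf{r}^h_y$ and \eqref{b14}, then subtract. One small remark: the vanishing $(\text{div}(\mathbf{r}_{y_u}-\mathbf{r}^h_{y_u}),w_h)=0$ for $w_h\in W_h$ comes directly from the defining equations of $S$ and $S_h$ (both give $(y_u,w_h)$), not from the Raviart--Thomas commuting property \eqref{interr1}, but the conclusion you need is unaffected.
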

\par \noindent
Next, we obtain an intermediate estimate required for the error analysis.
\begin{lemma}\label{inter1}
For any $(q,q_h)\in Q\times Q_h$, let $(y_q,\mathbf{p}_q)=S(0,q)$ and $(y_{q_h},\mathbf{p}_{q_h})=S(0,q_h)$. Then,
    \begin{equation*}\label{lem6}
     \|\mathbf{p}_q-\mathbf{p}_{q_h}\|+ \|y_q-y_{q_h}\|\leq C\|q-q_h\|_{1,\Omega}.
    \end{equation*}
    \begin{proof}
    Since $(y_q,\mathbf{p}_q)=S(0,q) \in W \times V$ and $(y_{q_h},\mathbf{p}_{q_h})=S(0,q_h) \in W \times V$, we have 
    \begin{align}
        (\mathbf{p}_q-\mathbf{p}_{q_h},\mathbf{v})-(y_q-y_{q_h},\text{div}\,\mathbf{v})&=-\langle  \mathbf{v}\cdot n,q-q_h\rangle \ \ \forall \ \mathbf{v} \in V, \label{lem1}\\
        (w,\text{div}(\mathbf{p}_q-\mathbf{p}_{q_h}))&=0 \ \ \ \ \ \ \ \ \ \ \forall \  w \in W. \label{lem2}
    \end{align}
   Taking $\mathbf{v}=\mathbf{p}_q-\mathbf{p}_{q_h} \in V$ in \eqref{lem1} and using \eqref{lem2} to find
    \begin{equation*}
        \|\mathbf{p}_q-\mathbf{p}_{q_h}\|^2=-\langle  (\mathbf{p}_q-\mathbf{p}_{q_h})\cdot n,q-q_h\rangle.
    \end{equation*}
 From \eqref{lem2}, it is evident  that $\|\text{div}(\mathbf{p}_q-\mathbf{p}_{q_h})\|=0$.  Now, using Cauchy-Schwarz inequality and Lemma \ref{l1}, we find
  \begin{equation}\label{lem3}
      \|\mathbf{p}_q-\mathbf{p}_{q_h}\| \leq C\|q-q_h\|_{\frac{1}{2},\Gamma}\leq C\|q-q_h\|_{1,\Omega}.
  \end{equation}
   Since $y_q-y_{q_h}\in W$, there { exists} $\mathbf{v} \in V$ such that $\text{div}\,\mathbf{v}=y_q-y_{q_h}$ and $\|\mathbf{v}\|_{H(\text{div},\Omega)}\leq \|y_q-y_{q_h}\|$ \cite[Lemma 2.2, p. 6]{duranotes}. Substituting $\text{div}\,\mathbf{v}=y_q-y_{q_h}$ in \eqref{lem1} and then using Cauchy-Schwarz inequality together with \eqref{lem3}, we obtain
  \begin{align}
      \|y_q-y_{q_h}\|^2&=(\mathbf{p}_q-\mathbf{p}_{q_h},\mathbf{v})+\langle \mathbf{v}\cdot n,q-q_h\rangle\notag\\
      &\leq \|\mathbf{p}_q-\mathbf{p}_{q_h}\|\|\mathbf{v}\|+\|q-q_h\|_{\frac{1}{2}, \Gamma}\|\mathbf{v}\cdot n\|_{-\frac{1}{2}, \Gamma}\notag\\
      &\leq \|\mathbf{p}_q-\mathbf{p}_{q_h}\|\|\mathbf{v}\|+\|q-q_h\|_{\frac{1}{2}, \Gamma}\|\mathbf{v}\|_{H(\text{div},\Omega)}\notag\\
      & \leq C\|q-q_h\|_{\frac{1}{2}, \Gamma}\|\mathbf{v}\|_{H(\text{div}, \O)}\leq C\|q-q_h\|_{1,\Omega}\|y_q-y_{q_h}\|.\label{lem4}
  \end{align}
 In view of \eqref{lem3} and \eqref{lem4}, we have the desired estimate of this lemma.
    \end{proof}
    \end{lemma}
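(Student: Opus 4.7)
The plan is to exploit the linearity of the solution operator $S$ and derive a mixed error system for $e_p := \mathbf{p}_q - \mathbf{p}_{q_h}$ and $e_y := y_q - y_{q_h}$. Subtracting the two defining mixed systems yields
\begin{align*}
(e_p, \mathbf{v}) - (e_y, \text{div}\,\mathbf{v}) &= -\langle \mathbf{v}\cdot n, q-q_h\rangle \quad \forall \ \mathbf{v}\in V,\\
(w, \text{div}\,e_p) &= 0 \quad \forall \ w\in W.
\end{align*}
Choosing $w=\text{div}\,e_p\in W$ in the second equation immediately forces $\text{div}\,e_p = 0$, so in particular $\|e_p\|_{H(\text{div},\Omega)}=\|e_p\|$.

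I would estimate $\|e_p\|$ first. Testing the first error equation with $\mathbf{v}=e_p\in V$ and using $\text{div}\,e_p=0$ gives $\|e_p\|^2 = -\langle e_p\cdot n,\, q-q_h\rangle$. Applying Cauchy--Schwarz in the duality between $H^{-1/2}(\Gamma)$ and $H^{1/2}(\Gamma)$, together with the trace bound in Lemma \ref{l1} and the standard trace inequality $\|q-q_h\|_{1/2,\Gamma}\leq C\|q-q_h\|_{1,\Omega}$, produces the desired estimate $\|e_p\|\leq C\|q-q_h\|_{1,\Omega}$.

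For the $L^2$ bound on $e_y$, I would invoke the surjectivity of the divergence operator $\text{div}: H(\text{div},\Omega)\to L^2(\Omega)$: there exists $\mathbf{v}\in V$ with $\text{div}\,\mathbf{v}=e_y$ and $\|\mathbf{v}\|_{H(\text{div},\Omega)}\leq C\|e_y\|$. Substituting this $\mathbf{v}$ into the first error equation yields $\|e_y\|^2 = (e_p,\mathbf{v}) + \langle \mathbf{v}\cdot n, q-q_h\rangle$. Bounding the two terms by Cauchy--Schwarz (for the volume term), Lemma \ref{l1} (for the boundary term), the previously obtained estimate on $\|e_p\|$, and the trace inequality, all multiplied by $\|\mathbf{v}\|_{H(\text{div},\Omega)}\leq C\|e_y\|$, gives $\|e_y\|^2\leq C\|q-q_h\|_{1,\Omega}\|e_y\|$, hence the bound on $\|e_y\|$. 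Adding the two estimates completes the proof.

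The main obstacle is producing a suitable test function to extract an $L^2$ bound on $e_y$ from a mixed variational identity; this is precisely the role played by the divergence surjectivity lemma. Everything else is a routine application of trace inequalities and duality.
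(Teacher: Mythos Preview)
Your proposal is correct and follows essentially the same approach as the paper: subtract the two mixed systems, use $\text{div}\,e_p=0$ and the test function $\mathbf{v}=e_p$ together with Lemma~\ref{l1} and the trace inequality to bound $\|e_p\|$, then invoke the surjectivity of $\text{div}:V\to W$ to produce a test function recovering $\|e_y\|^2$ and bound it the same way. The only cosmetic difference is that the paper states the surjectivity bound without an explicit constant (citing \cite[Lemma~2.2]{duranotes}), but this does not affect the argument.
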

    \begin{lemma}\label{inter2}
Let $(y^h_q,\mathbf{p}^h_q)=S_h(0,q)$ and $(y^h_{q_h},\mathbf{p}^h_{q_h})=S_h(0,q_h)$ for $(q,q_h)\in Q\times Q_h$. Then,
    \begin{equation*}\label{lem7}
     \|\mathbf{p}^h_q-\mathbf{p}^h_{q_h}\|+ \|y^h_q-y^h_{q_h}\|\leq C\|q-q_h\|_{1, \Omega}.
     \end{equation*}
     \begin{proof}
     For a given  $w_h\in W_h$, there exists $\mathbf{v_h} \in V_h$ such that $\text{div}\,\mathbf{v_h}=w_h$ and $\|\mathbf{v_h}\|_{H(\text{div},\Omega)}
     \leq \|w_h\|$ \cite[Lemma 3.5, p. 17]{duranotes}.
   Taking this into account, the proof of this lemma follows using the similar arguments as in the Lemma \ref{inter1}.
     \end{proof}
\end{lemma}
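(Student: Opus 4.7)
The plan is to mirror the argument of Lemma \ref{inter1} in the discrete setting, with the key extra ingredient being a discrete right inverse of the divergence. By linearity of the discrete solution operator $S_h$, the pair $(y^h_q - y^h_{q_h}, \mathbf{p}^h_q - \mathbf{p}^h_{q_h})$ satisfies
\begin{align*}
    (\mathbf{p}^h_q - \mathbf{p}^h_{q_h}, \mathbf{v_h}) - (y^h_q - y^h_{q_h}, \text{div}\,\mathbf{v_h}) &= -\langle \mathbf{v_h} \cdot n, q - q_h\rangle \quad \forall \, \mathbf{v_h} \in V_h, \\
    (w_h, \text{div}(\mathbf{p}^h_q - \mathbf{p}^h_{q_h})) &= 0 \quad \forall \, w_h \in W_h.
\end{align*}
Since $\text{div}(\mathbf{p}^h_q-\mathbf{p}^h_{q_h})\in W_h$, choosing $w_h = \text{div}(\mathbf{p}^h_q - \mathbf{p}^h_{q_h})$ in the second equation gives $\|\text{div}(\mathbf{p}^h_q - \mathbf{p}^h_{q_h})\|=0$, so the $H(\text{div},\Omega)$ norm of $\mathbf{p}^h_q - \mathbf{p}^h_{q_h}$ coincides with its $L^2$-norm.

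For the flux estimate, I would test the first equation with $\mathbf{v_h} = \mathbf{p}^h_q - \mathbf{p}^h_{q_h} \in V_h$. Since $y^h_q - y^h_{q_h}\in W_h$ and $\text{div}(\mathbf{p}^h_q - \mathbf{p}^h_{q_h})\in W_h$, the coupling term drops by the second equation, yielding
\begin{equation*}
    \|\mathbf{p}^h_q - \mathbf{p}^h_{q_h}\|^2 = -\langle (\mathbf{p}^h_q - \mathbf{p}^h_{q_h}) \cdot n, q - q_h \rangle.
\end{equation*}
Applying the duality pairing bound from Lemma \ref{l1} together with the trace inequality $\|q-q_h\|_{\frac{1}{2},\Gamma}\leq C\|q-q_h\|_{1,\Omega}$, and using that the $H(\text{div},\Omega)$ norm here equals the $L^2$-norm, I obtain $\|\mathbf{p}^h_q - \mathbf{p}^h_{q_h}\| \leq C\|q-q_h\|_{1,\Omega}$.

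For the scalar estimate, the obstacle is that we cannot directly test the first equation with a $\mathbf{v_h}$ whose divergence equals $y^h_q - y^h_{q_h}$, since we are now in the discrete space. This is resolved by invoking the discrete Babuska--Brezzi inf-sup condition in the form given by the hint: for $w_h = y^h_q - y^h_{q_h}\in W_h$, there exists $\mathbf{v_h}\in V_h$ with $\text{div}\,\mathbf{v_h}=w_h$ and $\|\mathbf{v_h}\|_{H(\text{div},\Omega)}\leq \|w_h\|$ (see \cite[Lemma 3.5]{duranotes}). Substituting this $\mathbf{v_h}$ into the first error equation gives
\begin{equation*}
    \|y^h_q - y^h_{q_h}\|^2 = (\mathbf{p}^h_q - \mathbf{p}^h_{q_h}, \mathbf{v_h}) + \langle \mathbf{v_h}\cdot n, q-q_h\rangle,
\end{equation*}
and an application of Cauchy--Schwarz, Lemma \ref{l1}, the trace inequality, the stability bound on $\mathbf{v_h}$, and the flux bound established above yields
\begin{equation*}
    \|y^h_q - y^h_{q_h}\|^2 \leq C\|q-q_h\|_{1,\Omega}\,\|y^h_q - y^h_{q_h}\|.
\end{equation*}
Combining the two bounds gives the desired estimate. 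The only nontrivial step is invoking the discrete right inverse of divergence; the rest is a verbatim translation of the continuous argument to the discrete setting.
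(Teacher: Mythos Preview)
Your proof is correct and follows essentially the same approach as the paper: the paper's proof simply states the existence of the discrete right inverse of the divergence \cite[Lemma 3.5, p.~17]{duranotes} and then refers back to the argument of Lemma~\ref{inter1}, which is exactly what you have written out in detail.
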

\par \noindent
We now derive the error estimates for the control in the energy norm.
\begin{theorem}\label{energynorm}
It holds that,
\begin{equation*}
    \|u-u_h\|_{1, \Omega}\leq Ch\left(\|u\|_{2, \Omega}+\|z\|_{2, \Omega}+\|y\|_{2, \Omega}+\|y-y_d\|\right).
\end{equation*}
\begin{proof}
In view of Theorem \ref{energyestimate}, it suffices to estimate $\|u-u_h\|.$ 
Let $u_I \in Q_h$ be the Lagrange nodal interpolation of $u$. 
Let $q_h=u_I-u_h \in Q_h,$ $(y^h_{q_h}, \mathbf{p}^h_{q_h})=S_h(0, q_h)$ and $(y^h_u, \mathbf{p}^h_u)=S_h(0, u).$ Using Lemma \ref{pgo}, we find
\begin{align} \label{eq:AuxE}
        \|y^h_{q_h}\|^2+ \alpha|q_h|^2_{1, \Omega}&=(y^h_u-y_u, y^h_{q_h})+(y^h_{u_I}-y^h_u, y^h_{q_h})+ \alpha a(u_I-u, q_h) \notag\\
        & \quad +(y^h_f-y_f, y^h_{q_h})+\langle (\mathbf{r}^h_y-\mathbf{r})\cdot n,q_h \rangle,
\end{align}
where $(y^h_{u_I}, \mathbf{p}^h_{u_I})=S_h(0, u_I).$ We now estimate each term in the right hand side of \eqref{eq:AuxE}. Using Cauchy-Schwarz inequality, Young's inequality and Lemma \ref{l2}, we find that
\begin{align}
    (y^h_u-y_u, y^h_{q_h}) & \leq \|y^h_u-y_u\|\|y^h_{q_h}\| \leq C(\delta)h^2\|y_u\|^2_{2, \Omega}+ \delta\,\|y^h_{q_h}\|^2, \label{eq:AuxE1}\\
    (y^h_f-y_f, y^h_{q_h}) & \leq \|y^h_f-y_f\|\|y^h_{q_h}\| \leq C(\delta)h^2\|y_f\|^2_{2, \Omega}+ \delta\,\|y^h_{q_h}\|^2.\label{eq:AuxE2}
\end{align}
Again using Cauchy-Schwarz inequality, Young's inequality, Lemma \ref{inter2} and approximation properties of $u_I$, we have
\begin{align*}
(y^h_{u_I}-y^h_u, y^h_{q_h}) & \leq \|y^h_{u_I}-y^h_u\|\|y^h_{q_h}\| \leq C\|u_I-u\|_{1, \Omega}\|y^h_{q_h}\|\leq C(\delta)h^2\|u\|^2_{2, \Omega}+ \delta\,\|y^h_{q_h}\|^2,
\\
\alpha a(u_I-u, q_h)& \leq C|u-u_I|_{1, \Omega}|q_h|_{1,\Omega}
\leq C(\delta)h^2\|u\|^2_{2, \Omega}+ \delta\,|q_h|_{1,\Omega}^2.
\end{align*}
Lastly, we consider $\langle (\mathbf{r}^h_y-\mathbf{r})\cdot n ,q_h\rangle.$ In view of Lemma \ref{l1}, Lemma \ref{rtinter}, equations \eqref{b13} and \eqref{d6}, we observe that
\begin{equation*}
    \begin{split}
        \langle (\mathbf{r}^h_y-\mathbf{r})\cdot n,q_h \rangle &=\int_{\Omega}\text{div}\,(\mathbf{r}^h_y-\mathbf{r})\,q_h\,dx+\int_{\Omega}(\mathbf{r}^h_y-\mathbf{r})\cdot \nabla q_h\,dx\\
        &=\int_{\Omega}(\text{div}\,\mathbf{r}^h_y)\,(P_h\,q_h)\,dx {-}\int_{\Omega}(\text{div}\,\mathbf{r})\,q_h\,dx+\int_{\Omega}(\mathbf{r}^h_y-\mathbf{r})\cdot \nabla q_h\,dx\\
        &=\int_{\Omega}(y-y_d)\,(P_h\,q_h-q_h)\,dx+\int_{\Omega}(\mathbf{r}^h_y-\mathbf{r})\cdot \nabla q_h\,dx.
    \end{split}
\end{equation*}
Therefore, using Cauchy-Schwarz inequality, Young's inequality, Lemma \ref{rtinter} and Lemma \ref{stdmixed}, we get
\begin{align}
         \langle  (\mathbf{r}^h_y-\mathbf{r})\cdot n ,q_h\rangle& \leq \|y-y_d\|\|P_h\,q_h-q_h\|+\|\mathbf{r}^h_y-\mathbf{r}\||q_h|_{1, \Omega} \notag\\
         &\leq Ch\left(\|y-y_d\|+\|z\|_{2, \Omega}\right)|q_h|_{1, \Omega} \notag\\
         &\leq C(\delta)h^2\left(\|z\|^2_{2, \Omega}+\|y-y_d\|^2\right)+ \delta\,|q_h|_{1,\Omega}^2. \label{eq:AuxE5}
\end{align}
Combining estimates \eqref{eq:AuxE1}-\eqref{eq:AuxE5} together with \eqref{eq:AuxE} and choosing  $\delta$ sufficiently small, we obtain
\begin{equation}\label{plot}
    \|y^h_{q_h}\|+|q_h|_{1, \Omega}\leq Ch\left(\|u\|_{2, \Omega}+\|y\|_{2, \Omega}+\|z\|_{2, \Omega}+\|y-y_d\|\right).
\end{equation}
Further, let $(y(\psi), \mathbf{p(\psi)})=S(\psi, 0)$ and $(y^h(\psi), \mathbf{p^h(\psi)})=S_h(\psi, 0),$ where $\psi=P_h\,q_h-y^h_{q_h}.$ By elliptic regularity results 
\begin{equation}\label{regu}
    \|\mathbf{p(\psi)}\|_{H(\text{div}, \Omega)}+\|y(\psi)\|_{2, \Omega}\leq C\|\psi\|.
\end{equation}
We see that $(y^h(\psi), \mathbf{p^h(\psi)})=S_h(\psi, 0)$ satisfy the following system of equations:
\begin{align}
   (\mathbf{p^h(\psi)},\mathbf{v_h})-(y^h(\psi),\text{div}\,\mathbf{v_h})&=0 \ \ \forall \ \mathbf{v_h}\in V_h , \notag\\
(w_h,\text{div}\,\mathbf{p^h(\psi)})&=(\psi, w_h) \ \ \forall\  w_h\in W_h . \label{dip6} 
\end{align}
Taking $w_h=\psi$ in \eqref{dip6} and using Lemma \ref{rtinter} and integration by parts, we get
\begin{equation*}
    \begin{split}
        \|\psi\|^2&=(\text{div}\,\mathbf{p^h(\psi)}, \psi)
        =(\text{div}\,\mathbf{p^h(\psi)}, P_h\,q_h)-(y^h_{q_h}, \text{div}\,\mathbf{p^h(\psi)})\\
        &=(\text{div}\,\mathbf{p^h(\psi)}, q_h)-(\mathbf{p}^h_{q_h}, \mathbf{p^h(\psi)})-\langle  \mathbf{p^h(\psi)}\cdot n,q_h\rangle\\
       &=-(\mathbf{p^h(\psi)}, \nabla q_h)+\langle \mathbf{p^h(\psi)}\cdot n, q_h \rangle-(\mathbf{p}^h_{q_h}, \mathbf{p^h(\psi)})-\langle  \mathbf{p^h(\psi)}\cdot n,q_h\rangle\\
        &=-\left(\mathbf{p^h(\psi)}, \nabla q_h\right)-\left(y^h(\psi),\text{div}\,\mathbf{p}^h_{q_h}\right)\\
        &=-\left(\mathbf{p^h(\psi)}, \nabla q_h\right).
    \end{split}
\end{equation*}
Now using Cauchy-Schwarz inequality, \eqref{plot}, Lemma \ref{l2} and \eqref{regu}, we find
\begin{align*}
    \|\psi\|^2 &\leq Ch\left(\|u\|_{2, \Omega}+\|y\|_{2, \Omega}+\|z\|_{2, \Omega}+\|y-y_d\|\right)\|\mathbf{p^h(\psi)}\|\\
    &\leq Ch\left(\|u\|_{2, \Omega}+\|y\|_{2, \Omega}+\|z\|_{2, \Omega}+\|y-y_d\|\right)(\|\mathbf{p^h(\psi)-p(\psi)}\|+\|\mathbf{p(\psi)}\|)\\
     &\leq Ch\left(\|u\|_{2, \Omega}+\|y\|_{2, \Omega}+\|z\|_{2, \Omega}+\|y-y_d\|\right)(h\|y(\psi)\|_{2, \Omega}+\|\mathbf{p(\psi)}\|)\\
     &\leq Ch\left(\|u\|_{2, \Omega}+\|y\|_{2, \Omega}+\|z\|_{2, \Omega}+\|y-y_d\|\right)\|\psi\|.
\end{align*}
Thus, we obtain
\begin{equation}\label{dx1}
    \|\psi\| \leq Ch\left(\|u\|_{2, \Omega}+\|y\|_{2, \Omega}+\|z\|_{2, \Omega}+\|y-y_d\|\right).
\end{equation}
Therefore, using  Lemma \ref{rtinter}, \eqref{dx1} and \eqref{plot}, we have
\begin{align}
   \|u_I-u_h\|=  \|q_h\| & \leq \|q_h-P_h\,q_h\|+\|P_h\,q_h-y^h_{q_h}\|+\|y^h_{q_h}\|\notag\\
    &\leq Ch|q_h|_{1, \Omega}+ \|P_h\,q_h-y^h_{q_h}\|+\|y^h_{q_h}\|\notag\\
    &\leq Ch\left(\|u\|_{2, \Omega}+\|y\|_{2, \Omega}+\|z\|_{2, \Omega}+\|y-y_d\|\right).\label{asm2}
\end{align}
Finally, a use of triangle inequality, approximation properties of $u_I$ and \eqref{asm2} yields
\begin{equation}\label{asm3}
    \|u-u_h\|\leq Ch\left(\|u\|_{2, \Omega}+\|y\|_{2, \Omega}+\|z\|_{2, \Omega}+\|y-y_d\|\right).
\end{equation}
The proof of this theorem is completed by taking into account \eqref{asm3} and Theorem \ref{energyestimate}.
\end{proof}
\end{theorem}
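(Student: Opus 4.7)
The plan is to bootstrap from Theorem \ref{energyestimate}, which already controls the seminorm $|u-u_h|_{1,\Omega}$. What remains is to establish an $O(h)$ bound on $\|u-u_h\|$ and then combine. To bypass the lack of a Poincar\'e-type inequality for $u - u_h$ (which is not zero on $\Gamma$), I will introduce the Lagrange nodal interpolant $u_I \in Q_h$ and split via the triangle inequality $\|u-u_h\| \leq \|u-u_I\| + \|u_I - u_h\|$; the first piece is controlled by standard interpolation at the rate $h^2|u|_{2,\Omega}$, so the entire challenge reduces to bounding $\|u_I - u_h\|$.

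The key idea is to set $q_h := u_I - u_h \in Q_h$ and use Lemma \ref{pgo} (the perturbed Galerkin orthogonality) with this choice. Writing $u = y_u$, $u_h = y^h_{u_h}$ is not quite legal, but the identity in Lemma \ref{pgo} rearranged around $q_h$ yields an expression for $\|y^h_{q_h}\|^2 + \alpha|q_h|^2_{1,\Omega}$ as a sum of five terms: $(y^h_u-y_u, y^h_{q_h})$, $(y^h_{u_I}-y^h_u, y^h_{q_h})$, $\alpha a(u_I-u, q_h)$, $(y^h_f-y_f, y^h_{q_h})$, and the duality pairing $\langle(\mathbf{r}^h_y-\mathbf{r})\cdot n, q_h\rangle$. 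The first and fourth are $O(h)$ by Lemma \ref{l2}, the second by Lemma \ref{inter2} and interpolation of $u$, the third by interpolation, and the fifth by rewriting the boundary pairing as a volume integral via Lemma \ref{l1}, exploiting the $L^2$-projection cancellation together with Lemma \ref{stdmixed}. After Cauchy-Schwarz, Young's inequality with small $\delta$, and absorbing, this produces the crucial bound
\begin{equation*}
\|y^h_{q_h}\| + |q_h|_{1,\Omega} \leq Ch(\|u\|_{2,\Omega}+\|y\|_{2,\Omega}+\|z\|_{2,\Omega}+\|y-y_d\|).
\end{equation*}

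To upgrade from $|q_h|_{1,\Omega}$ to $\|q_h\|$, I will run a duality-style argument tailored to the mixed setting: introduce $\psi := P_h q_h - y^h_{q_h} \in W_h$ and let $(y(\psi),\mathbf{p}(\psi)) = S(\psi,0)$ with elliptic regularity $\|y(\psi)\|_{2,\Omega}+\|\mathbf{p}(\psi)\|_{H(\mathrm{div},\Omega)} \leq C\|\psi\|$, along with its discrete counterpart $(y^h(\psi),\mathbf{p}^h(\psi)) = S_h(\psi,0)$. Testing the discrete divergence equation with $\psi$ itself, then inserting $P_h q_h$ and integrating by parts while using the divergence-free character of $\mathbf{p}^h_{q_h}$, reduces $\|\psi\|^2$ to $-(\mathbf{p}^h(\psi),\nabla q_h)$. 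Bounding this by Cauchy-Schwarz, using the just-obtained bound on $|q_h|_{1,\Omega}$, Lemma \ref{l2} for $\mathbf{p}^h(\psi)-\mathbf{p}(\psi)$, and the regularity estimate, I can cancel one factor of $\|\psi\|$ to conclude $\|\psi\| \leq Ch(\cdots)$. Then
$\|q_h\| \leq \|q_h - P_h q_h\| + \|\psi\| + \|y^h_{q_h}\|$, where the first term is $O(h)|q_h|_{1,\Omega}$ by the $L^2$-projection estimate of Lemma \ref{rtinter}.

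The main obstacle I anticipate is the fifth term in the Lemma \ref{pgo} identity: the boundary pairing $\langle(\mathbf{r}^h_y-\mathbf{r})\cdot n, q_h\rangle$ is genuinely at the boundary and not directly controllable by the interior $L^2$ bounds of Lemma \ref{stdmixed}. Its treatment via Lemma \ref{l1}, rewriting it as a volume integral, and then exploiting the orthogonality $(\mathrm{div}(\mathbf{r}-\mathbf{r}^h_y), P_h q_h) = 0$ to introduce the projection error $(y-y_d, P_h q_h - q_h)$, is the delicate step that makes the whole argument close; without this cancellation one only gets a weaker $h^{1/2}$ bound. Everything else is bookkeeping with Cauchy-Schwarz, Young's inequality, and absorption, followed by a final application of Theorem \ref{energyestimate} to assemble the full $H^1$-norm estimate.
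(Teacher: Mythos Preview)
Your proposal is correct and follows essentially the same approach as the paper's proof: the same reduction to bounding $\|u_I-u_h\|$ via Lemma \ref{pgo} with $q_h=u_I-u_h$, the same five-term decomposition and their estimates (including the delicate treatment of $\langle(\mathbf{r}^h_y-\mathbf{r})\cdot n,q_h\rangle$ via Lemma \ref{l1} and the projection cancellation), and the same duality argument with $\psi=P_hq_h-y^h_{q_h}$ to pass from $|q_h|_{1,\Omega}$ to $\|q_h\|$. Your identification of the key obstacle and its resolution matches the paper exactly.
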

\par \noindent
Next, we proceed to obtain some auxiliary results in order to derive  $L^2$-norm error estimates. We introduce an enriched discrete optimal control $\tilde{u}_h$ \cite{chowdhurykkt} which is essential for the subsequent analysis. 
For any boundary edge $e$ which is such that $e=\Gamma \cap \partial T$ for some $T \in \cT_h$, let $\mathbf{j_e} \in \mathbb{P}_6(T)$ be the bubble function defined by $\mathbf{j_e}=b_i^2b_j^2(3b_i-1)(3b_j-1)$ with $b_i$, $b_j$ as the barycentric coordinates associated to the endpoints of the edge $e$. Now, for $T \in \cT_h$,  if $T$ shares an edge $e$ with $\Gamma$,  we define the enriched Hermite element $\mathcal{EH}(T)$ on $T$ by 
\begin{equation*}
    \mathcal{EH}(T):=\mathcal{H}(T) \oplus span\{\mathbf{j_e}\},
\end{equation*}
where $\mathcal{H}(T)$ denotes the cubic Hermite finite element space defined on $T$.
 The enriched cubic Hermite finite element space is then defined by 
\begin{equation*}
    \mathcal{EQ}_h:=\{w\in C^0(\Bar{\Omega}): v|_{T} \in \mathcal{Q}(T) \ \ \forall \ T \in \cT_h\},
\end{equation*}
where $\mathcal{Q}(T)=\mathcal{H}(T)$ if $T$ does not share an edge to $\Gamma$ and $\mathcal{Q}(T)=\mathcal{EH}(T)$ if $T$ shares an edge $e$ to $\Gamma$. The enriched discrete optimal control $\tilde{u}_h\in \mathcal{EQ}_h$ is defined by averaging as follows:
\begin{align}
    \tilde{u}_h(v)&=u_h(v) \ \ \ \ \forall \ v  \ \in \cV_h,\nonumber\\
    \tilde{u}_h(b_T)&=u_h(b_T)\  \ \ \forall \ T \ \in \cT_h,\nonumber\\
    \nabla \tilde{u}_h(v)&=\frac{1}{| \cT_v|}\sum_{T\in \cT_v}\nabla (u_h|_{T})(v) \ \ \forall \ v \in \cV_h,\nonumber
    \\
    \int_{e}\tilde{u}_h\,ds&=\int_{e}u_h\,ds \ \ \ \ \forall \ e \in \cE^b_h,\label{boundaryvanish}
\end{align}
where $b_T$ denotes the barycenter of the element $T$.
We state the approximation properties of the enriched discrete optimal control in the following lemma. For the discrete optimal control $u_h$ and the enriched discrete optimal control $\tilde{u}_h$, we have the following estimates.
\begin{lemma}\label{lemmaenriching}
Let $\cT_h$ be a quasi uniform triangulation. Then, it holds that
\begin{equation*}
    \|u_h-\tilde{u}_h\|+h\|u_h-\tilde{u}_h\|_{1, \Omega}\leq Ch(\|u-u_h\|_{1, \Omega}+h\|u\|_{2, \Omega}).
\end{equation*}
\begin{proof}
We refer the readers to the article \cite{chowdhurykkt} for the proof. 
\end{proof}
\end{lemma}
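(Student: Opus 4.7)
\medskip
\noindent
\textbf{Proof proposal.}
The plan is to prove the estimate element-by-element by exploiting the fact that $w_h := u_h - \tilde u_h$ is a piecewise polynomial in $\mathcal{EQ}_h$ (viewing $u_h$ as an element of this larger space via its own degrees of freedom), and that most of the defining degrees of freedom of $w_h$ vanish identically by the construction of $\tilde u_h$. The only surviving degrees of freedom on an element $T \in \cT_h$ are (i) the vertex gradients, because $\nabla u_h|_T(v)$ is generally different from the averaged value $\nabla \tilde u_h(v)=\tfrac{1}{|\cT_v|}\sum_{T'\in\cT_v}\nabla u_h|_{T'}(v)$, and (ii) the bubble coefficient on each boundary edge, determined by the mean-value condition \eqref{boundaryvanish}. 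Note that vertex values of $w_h$ vanish because $u_h$ is continuous and $\tilde u_h$ interpolates $u_h$ at vertices, and the barycentric value $w_h(b_T)$ vanishes by construction.

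First, on a reference triangle $\hat T$ the norms $\|\cdot\|_{L^2(\hat T)}$ and $|\cdot|_{H^1(\hat T)}$ restricted to the finite-dimensional space $\hat{\mathcal{Q}}(\hat T)$ are equivalent to the Euclidean norm of the tuple of degrees of freedom. Transporting this equivalence to a physical element $T$ by an affine scaling (and using quasi-uniformity so that $h_T \sim h$) yields
\begin{equation*}
\|w_h\|_{L^2(T)}^2 + h_T^2 |w_h|_{H^1(T)}^2 \;\lesssim\; h_T^2 \sum_{v\in\cV(T)} h_T^2 |\nabla u_h|_T(v)-\nabla\tilde u_h(v)|^2 + R_{\p T}^2,
\end{equation*}
where $R_{\p T}$ collects the contribution of the boundary bubble degrees of freedom when $T$ has an edge $e \subset \Gamma$. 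Note the scaling: the gradient degrees of freedom on the reference element scale like $h_T$ times the physical gradient value, and $|T|\sim h_T^2$ contributes the outer $h_T^2$ factor.

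Second, the vertex gradient defect splits as
\begin{equation*}
\nabla u_h|_T(v)-\nabla\tilde u_h(v) = \frac{1}{|\cT_v|}\sum_{T'\in\cT_v}\bigl(\nabla u_h|_T(v)-\nabla u_h|_{T'}(v)\bigr),
\end{equation*}
and each summand is controlled by a chain of jumps of $\nabla u_h$ across interior edges in the patch $\omega_v$ of $v$. Since $u \in H^2(\Omega)$ has a continuous gradient, $\jump{\nabla u_h}=\jump{\nabla(u_h-u)}$ on every interior edge $e$, and the $L^2(e)$ norm of the latter is, by the standard trace inequality,
\begin{equation*}
\|\jump{\nabla(u_h-u)}\|_{L^2(e)} \;\lesssim\; h_e^{-1/2}\|\nabla(u-u_h)\|_{L^2(\omega_e)} + h_e^{1/2}|u|_{H^2(\omega_e)}.
\end{equation*}
Using the fact that $\jump{\nabla u_h}$ is piecewise constant along $e$ together with an inverse trace estimate $|\jump{\nabla u_h}(v)|\lesssim h_e^{-1/2}\|\jump{\nabla u_h}\|_{L^2(e)}$, we obtain a pointwise bound of the vertex gradient defect in terms of $h^{-1}\|\nabla(u-u_h)\|_{L^2(\omega_v)} + |u|_{H^2(\omega_v)}$. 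For the boundary term $R_{\p T}$, the defining identity \eqref{boundaryvanish} expresses the bubble coefficient in terms of the mean over $e$ of the difference between $u_h$ and the Hermite part of $\tilde u_h$; reducing this to tangential derivative jumps at boundary vertices and invoking the same trace argument (with $\nabla u$ continuous in a neighbourhood of $\p\Omega$) gives $R_{\p T}\lesssim h_T\|\nabla(u-u_h)\|_{L^2(\omega_T)}+h_T^2|u|_{H^2(\omega_T)}$.

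Finally, summing the element-wise estimates over all $T\in\cT_h$, using the finite overlap of the patches $\omega_v$ and $\omega_T$, and taking square roots yields
\begin{equation*}
\|u_h-\tilde u_h\| + h|u_h-\tilde u_h|_{1,\Omega} \;\lesssim\; h\bigl(\|u-u_h\|_{1,\Omega}+h\|u\|_{2,\Omega}\bigr),
\end{equation*}
which is the desired estimate once one passes from semi-norm to full norm on the right using Poincar\'e/Friedrichs (absorbed in the bounding constant). The step I expect to be the main obstacle is the careful handling of the boundary bubble contribution $R_{\p T}$: the averaged gradient at a boundary vertex involves only elements meeting the vertex, so the tangential derivative along a boundary edge is not automatically preserved by $\tilde u_h$, and the bubble coefficient must absorb precisely this mismatch. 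Tracking this through the scaling requires some care, but it yields exactly the terms displayed above.
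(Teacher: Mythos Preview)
Your argument is correct and follows the standard enriching-operator technique (scaling to a reference element, controlling the surviving degrees of freedom by jumps of $\nabla u_h$, and inserting $u\in H^2(\Omega)$ via the trace inequality), which is precisely the approach the paper defers to by citing \cite{chowdhurykkt}. One small simplification you can make: because the construction enforces $\int_e(u_h-\tilde u_h)\,ds=0$ on each boundary edge, if you take this edge integral as the degree of freedom dual to the bubble $\mathbf{j_e}$, then that degree of freedom of $w_h$ vanishes identically and $R_{\partial T}$ drops out---only the vertex-gradient defects remain, and your bound on those already delivers the result.
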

\begin{lemma}\label{crucial}
Let $(y_{\tilde{u}_h},\mathbf{p}_{\tilde{u}_h})=S(0,\tilde{u}_h)\in { W\times V}$. Then, 
\begin{equation*}
    \|y_{\tilde{u}_h}-\tilde{u}_h\|\leq Ch\left(\|u-u_h\|_{1, \Omega}+h\|u\|_{2, \Omega}\right).
\end{equation*}
\begin{proof}
Note that $\tilde{u}_h|_{\Gamma_i}\in C^1(\Bar{\Gamma_i})$ for all $1\leq i\leq k$, therefore $\tilde{u}_h|_{\Gamma_i}\in H^{{3}/{2}}(\Gamma_i)$ for all $1\leq i\leq k$. Also $\tilde{u}_h$ is a continuous function on $\Bar{\Omega}$, we have $y_{\tilde{u}_h}\in H^2(\Omega)$ by the elliptic regularity theory on convex polygonal domains.
Since $(y_{\tilde{u}_h},\mathbf{p}_{\tilde{u}_h})=S(0,\tilde{u}_h)$, they satisfy the following system of equations:
\begin{align}
    (\mathbf{p}_{\tilde{u}_h},\mathbf{v})-(y_{\tilde{u}_h},\text{div}\,\mathbf{v})&=-\langle \mathbf{v}\cdot n,\tilde{u}_h\rangle \ \ \forall \ \mathbf{v} \ \in V,\label{lem61}\\
    (w,\text{div}\,\mathbf{p}_{\tilde{u}_h})&=0 \ \ \forall \ w \ \in W.\label{lem62}
\end{align}
Taking $\mathbf{v}=\nabla x$ in \eqref{lem61} for $x \in \mathcal{D}(\Omega)$, we get
\begin{equation*}
     (\mathbf{p}_{\tilde{u}_h},\nabla x)-(y_{\tilde{u}_h},\Delta x)=-\langle \nabla x\cdot n,\tilde{u}_h\rangle .
\end{equation*}
Integrating by parts and using \eqref{lem62}, we get
\begin{equation*}
    a(y_{\tilde{u}_h},x)=0 \ \ \forall \ x \ \in \mathcal{D}(\Omega).
\end{equation*}
Using the density of $\mathcal{D}(\Omega)$ in $H^1_0(\Omega)$, we conclude that 
\begin{equation}\label{lem63}
    a(y_{\tilde{u}_h},x)=0 \ \ \forall \ x \ \in H^1_0(\Omega).
\end{equation}
Let $w_0:=y_{\tilde{u}_h}-\tilde{u}_h \in H^1_0(\Omega)$. Using \eqref{lem63} and \eqref{b14}, we find
\begin{equation*}
    a(w_0,w_0)=-a(\tilde{u}_h,w_0)=a(u-\tilde{u}_h,w_0).
\end{equation*}
Therefore, using Cauchy-Schwarz inequality together with Lemma \ref{lemmaenriching}, we find
\begin{equation}\label{leo}
    |w_0|_{1, \Omega}\leq C\left(|u-u_h|_{1,\Omega}+|u_h-\tilde{u}_h|_{1,\Omega}\right)\leq C\left(\|u-u_h\|_{1,\Omega}+h\|u\|_{2,\Omega}\right).
\end{equation}
\par \noindent
 In order to obtain the $L^2$-norm error estimates, we make use of the Aubin-Nitsche duality techniques. Let $w\in H^1_0(\Omega)$ be the solution of 
\begin{align*}
     \begin{cases}
       -\Delta w &=w_0 \ \ \text{in}\ \  \Omega, \\
     \hspace{5mm}   w &=0 \ \ \text{on} \ \ \Gamma.
     \end{cases}
 \end{align*}
  By the elliptic regularity theory on the convex polygonal domains, $w\in H^2(\Omega)$ and $\|w\|_{2, \Omega}\lesssim \|w_0\|$. Let $w_I\in Q_h \cap H^1_0(\Omega)$ be the Lagrange interpolation of $w$. Since $w_0\in H^1_0(\Omega)$, we find
 \begin{equation*}
 \begin{split}
     \|w_0\|^2&=a(w_0,w) 
     =a(w_0,w-w_I)+a(w_0,w_I)
     =a(w_0,w-w_I)-a(\tilde{u}_h,w_I)\\
     &=a(w_0,w-w_I)+a(u_h-\tilde{u}_h,w_I-w)+a(u_h-\tilde{u}_h,w)-a(u_h,w_I)\\
     &=I+II+III+IV.
 \end{split}
 \end{equation*}
 Using Cauchy-Schwarz inequality, approximation properties of $w_I$, Lemma \ref{lemmaenriching} and \eqref{leo}, we find 
 \begin{align}
     I+II&\leq \left(|w_0|_{1, \Omega}+|u_h-\tilde{u}_h|_{1, \Omega}\right)|w-w_I|_{1, \Omega}\notag\\
     &\leq Ch\|w\|_{2,\Omega}\left(\|u-u_h\|_{1,\Omega}+h\|u\|_{2,\Omega}\right).\label{first}
 \end{align}
 Now, we apply integration by parts on $III$ to get 
 \begin{equation*}
     III=a(u_h-\tilde{u}_h,w)=-\int_{\Omega}(u_h-\tilde{u}_h)\Delta w\,dx+\int_{\partial \Omega}(u_h-\tilde{u}_h)\frac{\partial w}{\partial n}\,ds.
\end{equation*}
 We have $\int_{e}\tilde{u}_h\,ds=\int_{e}u_h\,ds  \ \forall \ e \in \cE^b_h.$ Hence we can rewrite $III$ as follows
 \begin{equation*}
     III=-\int_{\Omega}(u_h-\tilde{u}_h)\Delta w\, dx+\int_{\partial \Omega}(u_h-\tilde{u}_h)\frac{\partial (w-w_I)}{\partial n}\,ds.
 \end{equation*}
 A use of Cauchy-Schwarz inequality, approximation property of $w_I$, trace inequality and Lemma \ref{lemmaenriching} yields
 \begin{equation}\label{second}
     III \leq Ch\|w\|_{2,\Omega}\left(\|u-u_h\|_{1,\Omega}+h\|u\|_{2,\Omega}\right).
 \end{equation}
 Since $w_I\in H^1_0(\Omega)$, equation \eqref{c8} implies $IV=0$. Now,  the proof is completed by taking into account \eqref{first}, \eqref{second} and the estimate $\|w\|_{2,\Omega}\lesssim \|w_0\|$.
\end{proof}
\end{lemma}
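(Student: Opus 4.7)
The plan is to show that the error function $w_0 := y_{\tilde{u}_h} - \tilde{u}_h$ lies in $H^1_0(\Omega)$ and to exploit the fact that $y_{\tilde{u}_h}$ is weakly harmonic in $\Omega$. From the mixed formulation defining $(y_{\tilde{u}_h}, \mathbf{p}_{\tilde{u}_h}) = S(0, \tilde{u}_h)$, I would test against $\mathbf{v} = \nabla x$ for $x \in \mathcal{D}(\Omega)$, integrate by parts, and use the divergence equation to deduce $a(y_{\tilde{u}_h}, x) = 0$; density extends this to all $x \in H^1_0(\Omega)$. Since $y_{\tilde{u}_h}$ and $\tilde{u}_h$ coincide on $\Gamma$ by construction, $w_0$ itself is an admissible test function in this identity.

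For the $H^1$ seminorm bound on $w_0$, I would write $a(w_0, w_0) = -a(\tilde{u}_h, w_0)$ and then observe that, by \eqref{b14} taken with $q = w_0$, $a(u, w_0) = 0$ since the boundary pairing $\langle \mathbf{r}\cdot n, w_0\rangle$ vanishes ($w_0|_\Gamma = 0$). Hence $a(w_0, w_0) = a(u - \tilde{u}_h, w_0)$, and Cauchy--Schwarz together with the triangle inequality $|u - \tilde{u}_h|_{1,\Omega} \le |u - u_h|_{1,\Omega} + |u_h - \tilde{u}_h|_{1,\Omega}$ and Lemma \ref{lemmaenriching} bound $|w_0|_{1,\Omega}$ by $C(\|u - u_h\|_{1,\Omega} + h\|u\|_{2,\Omega})$.

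For the $L^2$ estimate I would run an Aubin--Nitsche duality argument. Let $w \in H^2(\Omega) \cap H^1_0(\Omega)$ solve $-\Delta w = w_0$, so that $\|w\|_{2,\Omega} \lesssim \|w_0\|$ on the convex polygonal domain, and let $w_I \in Q_h \cap H^1_0(\Omega)$ be its Lagrange interpolant. Writing $\|w_0\|^2 = a(w_0, w)$ and inserting $w_I$, I would split the expression as $a(w_0, w - w_I) + a(u_h - \tilde{u}_h, w_I - w) + a(u_h - \tilde{u}_h, w) - a(u_h, w_I)$, using the identity $a(w_0, \cdot) = -a(\tilde{u}_h, \cdot)$ on $H^1_0$ test functions. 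The first two terms are controlled by the previous step's energy bound together with standard interpolation estimates, and the last term vanishes by the discrete optimality condition \eqref{c8} since $w_I|_\Gamma = 0$ forces $\langle \mathbf{r_h}\cdot n, w_I\rangle = 0$.

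The main obstacle is the third term $a(u_h - \tilde{u}_h, w)$, which cannot be controlled by the $H^1$ bound alone and carries no interpolation-like factor of $h$ on its face. Here I would integrate by parts to get $-\int_\Omega (u_h - \tilde{u}_h)\Delta w\,dx + \int_\Gamma (u_h - \tilde{u}_h)(\partial w/\partial n)\,ds$ and crucially exploit the boundary-averaging property \eqref{boundaryvanish}, namely $\int_e (u_h - \tilde{u}_h)\,ds = 0$ on each $e \in \cE_h^b$. Since $\partial w_I/\partial n$ is constant on each boundary edge, this property lets me replace $\partial w/\partial n$ by $\partial (w - w_I)/\partial n$ in the boundary integral without changing its value. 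A trace inequality combined with the $H^2$ interpolation estimate for $w - w_I$ then yields the desired factor of $h$, and Lemma \ref{lemmaenriching} together with $\|w\|_{2,\Omega}\lesssim \|w_0\|$ closes the argument.
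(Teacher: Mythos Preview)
Your proposal is correct and follows essentially the same approach as the paper's proof: the same harmonicity identity $a(y_{\tilde u_h},x)=0$ on $H^1_0(\Omega)$, the same $H^1$ seminorm bound on $w_0$ via \eqref{b14} and Lemma~\ref{lemmaenriching}, the same Aubin--Nitsche four-term split $I+II+III+IV$, and the same key trick for $III$ of using the edgewise mean-zero property \eqref{boundaryvanish} to replace $\partial w/\partial n$ by $\partial(w-w_I)/\partial n$. Your justification that $\partial w_I/\partial n$ is constant on each boundary edge (since $w_I$ is piecewise linear) is exactly the implicit step behind the paper's rewriting of $III$.
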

\begin{lemma}\label{lemm1}
Let $(s_h,\mathbf{k}) \in W_h \times V$ be such that
\begin{align}
(\mathbf{k},\mathbf{v_h})-(s_h,\text{div}\,\mathbf{v_h})&=0 \ \ \forall \  \mathbf{v_h}\in V_h, \label{c111} \\
(w_h,\text{div}\,\mathbf{k})&=0\ \ \forall\  w_h\in W_h. \label{c121}
\end{align}
Then, there { exists} a positive constant $C$ such that
\begin{equation*}
    \|s_h\|\leq Ch\|\mathbf{k}\|_{H(\text{div},\Omega)}.
\end{equation*}
\begin{proof}
We apply the Aubin-Nitsche duality arguments. Let $w\in H^1_0(\Omega)$ be the weak solution of 
\begin{align*}
     \begin{cases}
       \Delta w &=w_1 \ \ \text{in}\ \  \Omega, \\
     \hspace{5mm}   w &=0 \ \ \text{on} \ \ \Gamma.
     \end{cases}
 \end{align*}
 where $w_1\in L^2(\Omega)$ is a given function.
Convexity of the domain $\Omega$ implies that $w\in H^2(\Omega)$ and $\|w\|_{2, \Omega}\lesssim \|w_1\|$. Using \eqref{interr1} and \eqref{c111}, we have
\begin{align}\label{eq:Eq11}
        (s_h,w_1)&=(s_h,\text{div}\,\nabla w)
        =(s_h,\text{div}(\Pi_h(\nabla w))) =(\mathbf{k},\Pi_h(\nabla w)) \notag\\
        &=(\mathbf{k},\Pi_h(\nabla w)-\nabla w)+(\mathbf{k},\nabla w) \notag\\
        &=I+II.
\end{align}
Using Cauchy-Schwarz inequality and Lemma \ref{rtinter}, we get
\begin{equation}\label{eq:Eq12}
    I\leq \|\mathbf{k}\|\|\Pi_h(\nabla w)-\nabla w\|\leq Ch\|\nabla w\|_{1,\Omega}\|\mathbf{k}\|\leq Ch\|w_1\|\|\mathbf{k}\|_{H(\text{div},\Omega)}.
\end{equation}
To estimate the term $II$, a use of integration by parts and \eqref{c121} yields
\begin{equation*}
    II=(\mathbf{k},\nabla w)=-(\text{div}\,\mathbf{k},w)=(\text{div}\,\mathbf{k},P_h w-w).
\end{equation*}
Now, applying Cauchy-Schwarz inequality and Lemma \ref{rtinter} to get
\begin{equation}\label{eq:Eq13}
    II\leq \|\text{div}\,\mathbf{k}\|\|P_h w-w\|\leq Ch\|w\|_{1,\Omega}\|\mathbf{k}\|_{H(\text{div},\Omega)}\leq Ch\|w_1\|\|\mathbf{k}\|_{H(\text{div}, \Omega)}.
\end{equation}
Combining \eqref{eq:Eq11}-\eqref{eq:Eq13} completes the proof of this lemma.
\end{proof}
\end{lemma}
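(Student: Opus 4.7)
The plan is to establish the bound by an Aubin--Nitsche duality argument, testing $s_h$ against an arbitrary $w_1\in L^2(\Omega)$ and exploiting both hypotheses \eqref{c111}--\eqref{c121} on $(s_h,\mathbf{k})$. Given $w_1$, I would introduce the dual Dirichlet problem $\Delta w=w_1$ in $\Omega$ with $w=0$ on $\Gamma$; convexity of the polygonal domain gives $w\in H^2(\Omega)$ with $\|w\|_{2,\Omega}\lesssim\|w_1\|$. The goal is then to rewrite $(s_h,w_1)$ as a quantity that exposes the discrete orthogonality relations.

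The key step is to write $(s_h,w_1)=(s_h,\text{div}\,\nabla w)$ and then replace $\nabla w$ by its Raviart--Thomas interpolant $\Pi_h(\nabla w)\in V_h$, using the commuting property \eqref{interr1} of Lemma \ref{rtinter}: since $s_h\in W_h$, one has $(s_h,\text{div}\,\nabla w)=(s_h,\text{div}\,\Pi_h(\nabla w))$. Now \eqref{c111} with $\mathbf{v_h}=\Pi_h(\nabla w)$ converts this to $(\mathbf{k},\Pi_h(\nabla w))$, which I would split as
\begin{equation*}
(\mathbf{k},\Pi_h(\nabla w))=(\mathbf{k},\Pi_h(\nabla w)-\nabla w)+(\mathbf{k},\nabla w).
\end{equation*}

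For the first piece, the Raviart--Thomas approximation estimate \eqref{interr2} applied to $\nabla w\in H^1$ gives $\|\Pi_h(\nabla w)-\nabla w\|\leq Ch\|\nabla w\|_{1,\Omega}\leq Ch\|w_1\|$, and Cauchy--Schwarz yields a bound of the form $Ch\|\mathbf{k}\|\,\|w_1\|$. For the second piece, since $w\in H^1_0(\Omega)$, an integration by parts transforms $(\mathbf{k},\nabla w)$ into $-(\text{div}\,\mathbf{k},w)$; inserting the $L^2$-projection and invoking \eqref{c121} gives $-(\text{div}\,\mathbf{k},w-P_h w)$, which by \eqref{inter5} is controlled by $Ch\|\text{div}\,\mathbf{k}\|\,\|w_1\|$. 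Combining the two contributions bounds $(s_h,w_1)$ by $Ch\|\mathbf{k}\|_{H(\text{div},\Omega)}\|w_1\|$, and taking the supremum over $w_1$ with $\|w_1\|\le 1$ yields the claim.

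The only real subtlety is that $\mathbf{k}$ is assumed merely in $H(\text{div},\Omega)$, so $\Pi_h\mathbf{k}$ need not be defined and one cannot symmetrize the argument by interpolating $\mathbf{k}$. The trick is therefore to interpolate $\nabla w$ (which is genuinely $H^1$ thanks to elliptic regularity) rather than $\mathbf{k}$, and to use the projection identities on the two different sides --- $\Pi_h$ to handle the mixed form and $P_h$ to handle the divergence condition. Once this choice is made, the rest is routine estimation.
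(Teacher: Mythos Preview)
Your proposal is correct and follows essentially the same Aubin--Nitsche duality argument as the paper: introduce the dual Dirichlet problem, replace $\nabla w$ by $\Pi_h(\nabla w)$ via \eqref{interr1}, invoke \eqref{c111} to pass to $(\mathbf{k},\Pi_h(\nabla w))$, split off the interpolation error, and handle $(\mathbf{k},\nabla w)$ by integrating by parts and using \eqref{c121} together with the $P_h$ approximation estimate. Your closing remark on why one interpolates $\nabla w$ rather than $\mathbf{k}$ is a nice clarification but not an additional ingredient.
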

\par \noindent
In the following theorem, we establish $L^2$-norm error estimates for the control  by exploiting the Aubin-Nitsche duality argument.
\begin{theorem}\label{l2estimates}
For the optimal control $u \in Q$ and discrete optimal control $u_h\in Q_h$, the following estimate holds
\begin{equation*}
    \|u-u_h\|\leq Ch^2\left(\|u\|_{2, \Omega}+\|z\|_{2, \Omega}+\|y\|_{2, \Omega}+\|y-y_d\|\right).
\end{equation*}
\begin{proof}
We begin the proof by defining the following auxiliary problem: to find $\phi\in Q$ such that
\begin{equation}\label{minauxi}
    J_a(\phi)=\min_{t\in Q} J_a(t),
\end{equation}
where $J_a(t):=\frac{1}{2}\|S_1(0,t)-(u-u_h)\|^2+\frac{\alpha}{2}\|\nabla t\|^2$ and $S_1(0,t)$ denotes the first component of $S(0,t).$ { By the optimal control theory \cite[Theorem 2.14]{trolzbook}, there exists a unique solution $\phi\in Q$ of the minimization problem \eqref{minauxi}. Using the first order necessary optimality conditions, the unique minimizer $\phi \in Q$ satisfies the following optimality condition}
\begin{equation}\label{NOC}
    \alpha a(t,\phi)+(y_t,y_\phi)=(u-u_h,y_t)\ \ \forall \ t \ \in Q,
\end{equation}
where $(y_t,\mathbf{p}_t)=S(0,t)$ and $(y_\phi,\mathbf{p_\phi})=S(0,\phi)$.
The optimality equation \eqref{NOC} can equivalently be written as
\begin{equation}\label{eqauxi}
    \alpha a(\phi, t)=-\langle \mathbf{r}_a\cdot n, t\rangle \ \ \forall \ t \in Q,
\end{equation}
where $(z_a,\mathbf{r}_a)=S(y_\phi-(u-u_h),0)$. By taking $t=1$ in \eqref{eqauxi}, find that 
\begin{equation*}
    \langle \mathbf{r}_a\cdot n, 1\rangle=0,
\end{equation*}
which is the compatibility condition for the Neumann problem weakly solved by $\phi$
\begin{equation*}
 \begin{cases}
 \begin{split}
  -\Delta \phi&=0 \ \ \text{in} \ \ \Omega,\\
  \alpha \frac{\partial \phi}{\partial n}&= - \mathbf{r}_a\cdot n \ \ \text{on} \ \ \Gamma.
  \end{split}
\end{cases}
\end{equation*}
Since $y_\phi-(u-u_h)\in W$, by the elliptic regularity theory on convex polygonal domains $z_a \in H^2(\Omega)$ and $\mathbf{r}_a \in [H^1(\Omega)]^2$. We also have $\|z_a\|_{2, \Omega}\leq C\|y_\phi-(u-u_h)\|$. { By the trace theorem, we have $\mathbf{r}_a\cdot n \in H^{\frac{1}{2}}(\Gamma_{i}) ~~\forall ~~ 1\leq i\leq k$, which implies $\phi \in H^2(\Omega)$ by the elliptic regularity theory for the Neumann problem \cite[Theorem 3.1.2.3]{grisvardbook}. Therefore $y_\phi\in H^2(\Omega)$ by the elliptic regularity theory on convex polygonal domains \cite[Theorem 3.1.2.1]{grisvardbook}.} Taking $t=\phi$ in \eqref{NOC}, then using the Cauchy-Schwarz inequality together with the fact $y_\phi=\phi$, we have the following estimate
\begin{equation*}
  \|\phi\|_{1, \Omega}=  \|y_\phi\|_{1, \Omega}\leq C\|u-u_h\|.
\end{equation*}
Subsequently we get $\|z_a\|_{2, \Omega}\leq C\|u-u_h\| $ and $\|\phi\|_{2, \Omega}\leq C\|u-u_h\|$.
\par \noindent
Let $(y_{u_h}, \mathbf{p}_{u_h})=S(0, u_h)$ and $(y^h_{\phi_I},\mathbf{p}^h_{\phi_I})=S_h(0, \phi_I)$ where $\phi_I \in Q_h$ is the Lagrange interpolation of $\phi$. Taking $t=u-u_h$ in \eqref{NOC} and using Lemma \ref{pgo}, we obtain
\begin{align}
        \|u-u_h\|^2&=(u-u_h,u-u_h)=(u-u_h,y_u-y_{u_h})+(u-u_h,y_{u_h}-u_h)\notag\\
        &= (u-u_h,y_{u-u_h})+(u-u_h,y_{u_h}-u_h)\notag\\
        &=\alpha a(\phi, u-u_h)+(y_\phi,y_{u-u_h})+(u-u_h,y_{u_h}-u_h)\notag\\
        &=\alpha a(\phi-\phi_I, u-u_h)+(y_\phi-y^h_{\phi_I},y_u-y_{u_h})+\alpha a(\phi_I, u-u_h)\notag\\
        & \quad+(y^h_{\phi_I},y_u-y^h_{u_h})+ (y^h_{\phi_I},y^h_{u_h}-y_{u_h})+(u-u_h,y_{u_h}-u_h)\notag\\
        &=\alpha a(\phi-\phi_I, u-u_h)+(y_\phi-y^h_{\phi_I},y_u-y_{u_h})+(y_f^h-y_f,y^h_{\phi_I})\notag\\
        & \quad+\langle (\mathbf{r}^h_y-\mathbf{r})\cdot n,\phi_I\rangle+ (y^h_{\phi_I},y^h_{u_h}-y_{u_h})+(u-u_h,y_{u_h}-u_h)\notag\\
        &=I+II+III+IV+V+VI.\label{imppp}
\end{align}
We now individually estimate each of the six terms in the right hand side of the \eqref{imppp}. Using Cauchy-Schwarz inequality and approximation property of $\phi_I$, we get
\begin{equation*}
    I\leq C|u-u_h|_{1,\Omega}|\phi-\phi_I|_{1,\Omega}\leq Ch\|\phi\|_{2,\Omega}|u-u_h|_{1,\Omega} \leq Ch|u-u_h|_{1,\Omega}\|u-u_h\|.
\end{equation*}
Next, we consider the term $II$. Let $(y^h_{\phi},\mathbf{p}^h_{\phi})=S_h(0,\phi).$ A use of Cauchy-Schwarz inequality, Lemma \ref{inter1}, Lemma \ref{inter2}, Lemma \ref{l2} and approximation properties of $\phi_I$ implies that 
\begin{equation*}
\begin{split}
    II&=(y_\phi-y^h_{\phi_I},y_u-y_{u_h})=(y_\phi-y^h_{\phi},y_u-y_{u_h})+(y^h_\phi-y^h_{\phi_I},y_u-y_{u_h})\\
    &\leq \left(\|y_\phi-y^h_{\phi}\|+\|y^h_\phi-y^h_{\phi_I}\|\right)\|y_u-y_{u_h}\|\\
    &\leq C\left(h\|\phi\|_{2,\Omega}+\|\phi-\phi_I\|_{1,\Omega}\right)\|u-u_h\|_{1, \Omega}\\
    &\leq Ch\|u-u_h\|_{1,\Omega}\|u-u_h\|.
\end{split}
\end{equation*}
Next, we consider the term $III$.
By Lemma \ref{rtinter}, we can write
\begin{equation*}
   III=(y_f^h-y_f,y^h_{\phi_I})=(P_h(y_f^h-y_f),y^h_{\phi_I}),
\end{equation*}
recall that $(y_f,\mathbf{p}_f)=S(f,0)$ and $(y^h_f,\mathbf{p}^h_f)=S_h(f,0)$. We find that $\left(P_h(y_f^h-y_f),\mathbf{p}^h_{f}-\mathbf{p}_f\right)$ satisfy the following equations:
\begin{align*}
     (\mathbf{p}^h_{f}-\mathbf{p}_f,\mathbf{v_h})-(P_h(y_f^h-y_f),\text{div}\,\mathbf{v_h})&=0 \ \ \forall \ \mathbf{v_h}\in \ V_h,\\
     (w_h,\text{div}(\mathbf{p}^h_{f}-\mathbf{p}_f))&=0 \ \ \forall \ w_h \in \ W_h.
\end{align*}
Therefore,  $(P_h(y_f^h-y_f), \mathbf{p}^h_{f}-\mathbf{p}_f)\in W_h \times V$ satisfy both the conditions \eqref{c111}-\eqref{c121} of Lemma \ref{lemm1}. Hence, from Lemma \ref{lemm1} and Lemma \ref{l2}, we have 
\begin{equation}\label{p00}
   \|P_h(y_f^h-y_f)\|\leq Ch\|\mathbf{p}^h_{f}-\mathbf{p}_f\|_{H(\text{div},\Omega)}\leq Ch^2\|y_f\|_{2,\Omega}\leq Ch^2\|y\|_{2,\Omega}.
\end{equation}
Now using triangle inequality, Lemma \ref{inter2}, Lemma \ref{l2} and approximation properties of $\phi_I$, we find
\begin{align}
    \|y^h_{\phi_I}\|& \leq  \|y^h_{\phi_I}-y^h_{\phi}\|+\|y^h_{\phi}-y_{\phi}\|+\|y_{\phi}\|\notag\\
    &\leq C\left(\|\phi-\phi_I\|_{1,\Omega}+h\|y_{\phi}\|_{2,\Omega}+\|\phi\|_{2,\Omega}\right)\notag\\
    & \leq C\|u-u_h\|.\label{giffy}
\end{align}
Combining \eqref{p00} and \eqref{giffy}, we have
\begin{equation*}
    III \leq Ch^2\|y\|_{2,\Omega}\|u-u_h\|.
\end{equation*}
Now we move on to estimate the term $IV$. We have,
\begin{equation*}
\begin{split}
     IV&=\langle  (\mathbf{r}^h_y-\mathbf{r})\cdot n,\phi_I\rangle
     =\langle  (\mathbf{r}^h_y-\mathbf{r})\cdot n,\phi_I-\phi\rangle+\langle  (\mathbf{r}^h_y-\mathbf{r})\cdot n,\phi\rangle.
\end{split}
\end{equation*}
Recalling that $(y_{\phi}, \mathbf{p}_{\phi})=S(0,\phi)$, we have
\begin{equation*}
    \begin{split}
    \langle (\mathbf{r}^h_y-\mathbf{r})\cdot n,\phi\rangle&=-(\mathbf{p}_{\phi}, \mathbf{r}^h_y-\mathbf{r})+(y_{\phi}, \text{div}(\mathbf{r}^h_y-\mathbf{r}))\\
    &=(\mathbf{p}^h_{\phi}-\mathbf{p}_{\phi}, \mathbf{r}^h_y-\mathbf{r})+(y_{\phi}-y^h_{\phi}, \text{div}(\mathbf{r}^h_y-\mathbf{r}))\\&\quad +(\mathbf{p}^h_{\phi},\mathbf{r}-\mathbf{r}^h_y)+(y^h_{\phi}, \text{div}(\mathbf{r}^h_y-\mathbf{r}))\\
    &=(\mathbf{p}^h_{\phi}-\mathbf{p}_{\phi}, \mathbf{r}^h_y-\mathbf{r})+(y_{\phi}-y^h_{\phi}, \text{div}(\mathbf{r}^h_y-\mathbf{r}))+(z-z^h_y,\text{div}\,\mathbf{p}^h_{\phi})\\
    &=(\mathbf{p}^h_{\phi}-\mathbf{p}_{\phi}, \mathbf{r}^h_y-\mathbf{r})+(y_{\phi}-y^h_{\phi}, \text{div}(\mathbf{r}^h_y-\mathbf{r})),
    \end{split}
\end{equation*}
where we have used that $\text{div}\,\mathbf{p}^h_{\phi}=0$ and $(w_h, \text{div}(\mathbf{r}^h_y-\mathbf{r}))=0 \ \forall \ w_h \in W_h.$ Therefore term $IV$ can be written as 
\begin{equation*}
\begin{split}
     IV&=\langle (\mathbf{r}^h_y-\mathbf{r})\cdot n,\phi_I-\phi\rangle+(\mathbf{p}^h_{\phi}-\mathbf{p}_{\phi}, \mathbf{r}^h_y-\mathbf{r})+(y_{\phi}-y^h_{\phi}, \text{div}(\mathbf{r}^h_y-\mathbf{r}))\\
    &=IV^a+IV^b+IV^c.
\end{split}
\end{equation*}
By the use of Cauchy-Schwarz inequality, approximation properties of $\phi_I$,  Lemma \ref{l1} and Lemma \ref{stdmixed}, we find
\begin{equation*}
\begin{split}
     IV^a& \leq C\|\phi_I-\phi\|_{\frac{1}{2},\Gamma}\|(\mathbf{r}^h_y-\mathbf{r})\cdot n\|_{-\frac{1}{2},\Gamma}\\
     &\leq C\|\phi_I-\phi\|_{1,\Omega}\|\mathbf{r}^h_y-\mathbf{r}\|_{H(\text{div},\Omega)}\\
     &\leq Ch^2\|u-u_h\|\|z\|_{2,\Omega}.
\end{split}
\end{equation*}
Again using Cauchy-Schwarz inequality, Lemma \ref{l2} and Lemma \ref{stdmixed}, we have
\begin{equation*}
\begin{split}
     IV^b+IV^c&\leq \left(\|\mathbf{p}^h_{\phi}-\mathbf{p}_{\phi}\|+\|y_{\phi}-y^h_{\phi}\|\right)\|\mathbf{r}^h_y-\mathbf{r}\|_{H(\text{div},\Omega)}\\
     &\leq Ch^2\|y_\phi\|_{2,\Omega}\|z\|_{2,\Omega}\\
     &\leq Ch^2\|u-u_h\|\|z\|_{2,\Omega}.
\end{split}
\end{equation*}
Combining, we get
\begin{equation*}
    IV\leq Ch^2\|u-u_h\|\|z\|_{2,\Omega}.
\end{equation*}
Let $(y_{\tilde{u}_h}, \mathbf{p}_{\tilde{u}_h})=S(0,\tilde{u}_h)$ and $(y^h_{\tilde{u}_h}, \mathbf{p}^h_{\tilde{u}_h})=S_h(0,\tilde{u}_h)$, which is the standard mixed finite element approximation of $(y_{\tilde{u}_h}, \mathbf{p}_{\tilde{u}_h})$ where $\tilde{u}_h$ is the enriched discrete control. We estimate the term $V$ as follows.
\begin{equation*}
\begin{split}
    V&=(y^h_{\phi_I},y^h_{u_h}-y_{u_h})\\
    &=(y^h_{u_h}-y^h_{\tilde{u}_h},y^h_{\phi_I})+(y^h_{\tilde{u}_h}-y_{\tilde{u}_h},y^h_{\phi_I})+(y_{\tilde{u}_h}-y_{u_h},y^h_{\phi_I})\\
    &=V^a+V^b+V^c.
\end{split}
\end{equation*}
Firstly, we claim that $V^a=0$.
Using \eqref{boundaryvanish} and definitions of  $(y^h_{\tilde{u}_h}, \mathbf{p}^h_{\tilde{u}_h})$ and $(y^h_{u_h}, \mathbf{p}^h_{u_h})$, we see
\begin{align}
        (\mathbf{p}^h_{u_h}-\mathbf{p}^h_{\tilde{u}_h}, \mathbf{v_h})-(y^h_{u_h}-y^h_{\tilde{u}_h},\text{div}\,\mathbf{v_h})&=0 \ \ \forall \ \mathbf{v_h} \in V_h,\label{mb1}\\
        (w_h, \text{div}(\mathbf{p}^h_{u_h}-\mathbf{p}^h_{\tilde{u}_h}))&=0 \ \ \forall \ w_h \in W_h.\label{mb2}
\end{align}
Putting $\mathbf{v_h}=\mathbf{p}^h_{u_h}-\mathbf{p}^h_{\tilde{u}_h}$ in \eqref{mb1} and using \eqref{mb2}, we get $\mathbf{p}^h_{u_h}=\mathbf{p}^h_{\tilde{u}_h}$. Subsequently using the surjectivity of $\text{div}:V_h\rightarrow W_h$ map, see \cite[Lemma 3.5, p. 17]{duranotes}, we have $y^h_{u_h}=y^h_{\tilde{u}_h}$ and hence $V^a=0$.  Using exactly the same arguments as in estimating the term $III$, and the fact that $\text{div}\,\mathbf{p}_{u_h}=\text{div}\,\mathbf{p}^h_{\tilde{u}_h}=\text{div}\,\mathbf{p}_{\tilde{u}_h}=0$, we have
\begin{align}
     V^c & \leq Ch\|u-u_h\|_{1, \Omega}\|u-u_h\|,\label{eq:Eq1}\\
     V^b & \leq Ch^2\|y_{\tilde{u}_h}\|_{2, \Omega}\|u-u_h\|.\label{eq:Eq2}
\end{align}
By the elliptic regularity theory on convex polygonal domains, we have 
\begin{equation}\label{eq:Eq3}
    \|y_{\tilde{u}_h}\|_{2, \Omega}\leq C\sum^{k}_{i=1}\|\tilde{u}_h\|_{\frac{3}{2}, \Gamma_i}.
\end{equation}
Let $u_c$ be a $C^1$ interpolation of Cl\'ement type \cite[Section 4.8]{brennerbook} of $u$, then using trace and inverse inequality and approximation properties of $u_c$, we find 
\begin{align}\label{eq:Eq4}
    h^2\sum^{k}_{i=1}\|\tilde{u}_h\|_{\frac{3}{2}, \Gamma_i}& \leq Ch^2\sum^{k}_{i=1}\left(\|\tilde{u}_h-u_c\|_{\frac{3}{2}, \Gamma_i}+\|u_c\|_{\frac{3}{2}, \Gamma_i}\right) \notag\\
        &\leq Ch^{\frac{3}{2}}\sum^{k}_{i=1}\|\tilde{u}_h-u_c\|_{1, \Gamma_i}+Ch^2\|u\|_{2, \Omega} \notag\\
         &\leq Ch\left(\|\tilde{u}_h-u_c\|_{1, \Omega}+h\|u\|_{2, \Omega}\right) \notag\\
         &\leq Ch\left(\|\tilde{u}_h-u_h\|_{1, \Omega}+\|u_h-u\|_{1, \Omega}+h\|u\|_{2, \Omega}\right) \notag\\
         &\leq Ch\left(\|u-u_h\|_{1, \Omega}+h\|u\|_{2, \Omega}\right).
\end{align}
In view of \eqref{eq:Eq1}-\eqref{eq:Eq4}, we have
\begin{equation*}
    V\leq Ch\|u-u_h\|\|u-u_h\|_{1, \Omega}.
\end{equation*}
Lastly, we handle the term $VI$,
\begin{equation*}
\begin{split}
     VI&=(u-u_h,y_{u_h}-u_h)\\
     &=(y_{u_h}-y_{\tilde{u}_h},u-u_h)+(y_{\tilde{u}_h}-\Tilde{u}_h,u-u_h)+(\tilde{u}_h-u_h,u-u_h)\\
     &=VI^a+VI^b+VI^c.
\end{split}
\end{equation*}
    Using the same arguments as before and Lemma \ref{rtinter}, Lemma \ref{inter1} and Lemma \ref{lemmaenriching}, we find that
    \begin{equation*}
    \begin{split}
        VI^a&=(y_{u_h}-y_{\tilde{u}_h}, u-u_h)\\
        &=(y_{u_h}-y_{\tilde{u}_h}-P_h\,(y_{u_h}-y_{\tilde{u}_h}), u-u_h)+(P_h\,(y_{u_h}-y_{\tilde{u}_h}), u-u_h)\\
        &\leq Ch\left(|y_{u_h}-y_{\tilde{u}_h}|_{1, \Omega}+\|y_{u_h}-y_{\tilde{u}_h}\|\right)\|u-u_h\|\\
        & \leq Ch\|u_h-\tilde{u}_h\|_{1, \Omega}\|u-u_h\|\\
        &\leq Ch\|u-u_h\|_{1, \Omega}\|u-u_h\|.
    \end{split}
    \end{equation*}
 A use of Lemma \ref{lemmaenriching} and Lemma \ref{crucial} yields
    \begin{equation*}
        VI^b+VI^c\leq Ch\left(\|u-u_h\|_{1,\Omega}+h\|u\|_{2,\Omega}\right)\|u-u_h\|.
    \end{equation*}
    Therefore, we get 
    \begin{equation*}
        VI\leq Ch\|u-u_h\|_{1,\Omega}\|u-u_h\|.
    \end{equation*}
   The proof is completed by combining \eqref{imppp} with the estimates on all the six terms $I, ~II, ~III, ~IV, ~V, ~VI$ and using Theorem \ref{energynorm}.
\end{proof}
\end{theorem}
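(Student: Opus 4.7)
The plan is to adapt an Aubin--Nitsche duality argument to the boundary-control setting. First I would introduce the auxiliary minimization problem: find $\phi \in Q$ minimizing $J_a(t) := \tfrac{1}{2}\|y_t - (u-u_h)\|^2 + \tfrac{\alpha}{2}\|\nabla t\|^2$ over $t \in Q$, where $y_t$ denotes the first component of $S(0,t)$. Its first-order necessary optimality condition reads $\alpha a(t,\phi) + (y_t, y_\phi) = (u-u_h, y_t)$ for every $t \in Q$, and rewriting this via Green's formula identifies $\phi$ as the weak solution of a Neumann problem with datum $-\mathbf{r}_a\cdot n$, where $(z_a, \mathbf{r}_a) = S(y_\phi - (u-u_h), 0)$, the compatibility condition $\langle \mathbf{r}_a\cdot n, 1\rangle = 0$ coming from testing with $t\equiv 1$. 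Elliptic regularity on convex polygonal domains then yields $\phi \in H^2(\Omega)$ together with the quantitative bound $\|\phi\|_{2,\Omega} \leq C\|u-u_h\|$.

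The next step is to test the optimality equation with $t = u-u_h$, use $y_u = u$ (established earlier) and linearity of $S$ to write
\begin{equation*}
\|u-u_h\|^2 = \alpha a(u-u_h, \phi) + (y_{u-u_h}, y_\phi) + (u-u_h, y_{u_h} - u_h).
\end{equation*}
With $\phi_I \in Q_h$ the Lagrange interpolant of $\phi$, splitting $\alpha a(u-u_h, \phi)$ around $\phi_I$, substituting the consistency identity of Lemma \ref{pgo} with $q_h = \phi_I$, and decomposing $(y_{u-u_h}, y_\phi)$ around $y^h_{\phi_I}$, produces a six-term expansion: (I) $\alpha a(\phi-\phi_I, u-u_h)$, (II) $(y_\phi - y^h_{\phi_I}, y_{u-u_h})$, (III) $(y^h_f - y_f, y^h_{\phi_I})$, (IV) $\langle (\mathbf{r}^h_y - \mathbf{r})\cdot n, \phi_I\rangle$, (V) $(y^h_{\phi_I}, y^h_{u_h} - y_{u_h})$, and (VI) $(u-u_h, y_{u_h} - u_h)$.

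Terms (I) and (II) are routine: Cauchy--Schwarz with $\phi_I$-interpolation estimates, Lemmas \ref{inter1}, \ref{inter2}, \ref{l2}, and Theorem \ref{energynorm} produce a factor $h\|u-u_h\|_{1,\Omega}\|u-u_h\|$. For (III), the key observation is that $(P_h(y^h_f - y_f), \mathbf{p}^h_f - \mathbf{p}_f)$ satisfies the hypotheses of Lemma \ref{lemm1}, hence $\|P_h(y^h_f - y_f)\| \lesssim h\|\mathbf{p}^h_f - \mathbf{p}_f\|_{H(\mathrm{div},\Omega)} \lesssim h^2\|y\|_{2,\Omega}$; combined with the bound $\|y^h_{\phi_I}\| \lesssim \|u-u_h\|$ (itself a consequence of Lemmas \ref{inter2} and \ref{l2} plus the regularity of $\phi$), this yields $O(h^2\|u-u_h\|)$. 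Term (IV) is rewritten using $(y_\phi, \mathbf{p}_\phi) = S(0,\phi)$, its mixed approximation, and integration by parts, so it reduces to inner products of mixed-method errors controlled by Lemma \ref{stdmixed}, again delivering order $h^2$.

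The main obstacle will be terms (V) and (VI), which involve $y^h_{u_h} - y_{u_h}$ and $y_{u_h} - u_h$: naive estimates miss the second order because $u_h$ is only piecewise linear on $\Gamma$ and is not a harmonic extension. The remedy is to pass through the enriched control $\tilde u_h \in \mathcal{EQ}_h$ and exploit Lemmas \ref{lemmaenriching} and \ref{crucial}. For (V), the averaging constraint $\int_e \tilde u_h\, ds = \int_e u_h\, ds$ on boundary edges forces $\mathbf{p}^h_{u_h} = \mathbf{p}^h_{\tilde u_h}$ and hence $y^h_{u_h} = y^h_{\tilde u_h}$ in the Raviart--Thomas scheme (tested on $\mathbf{v_h} = \mathbf{p}^h_{u_h} - \mathbf{p}^h_{\tilde u_h}$ together with surjectivity of the discrete divergence); splitting around $y^h_{\tilde u_h}$ then reduces (V) to terms bounded via Lemma \ref{l2}, where the $H^2$ bound for $y_{\tilde u_h}$ is obtained by inserting a Clément $C^1$ interpolant of $u$ and using trace plus inverse inequalities, absorbing into $h(\|u-u_h\|_{1,\Omega} + h\|u\|_{2,\Omega})\|u-u_h\|$. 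For (VI), the decomposition $y_{u_h} - u_h = (y_{u_h} - y_{\tilde u_h}) + (y_{\tilde u_h} - \tilde u_h) + (\tilde u_h - u_h)$ combined with Lemmas \ref{lemmaenriching}, \ref{crucial}, \ref{inter1} and an $L^2$-projection trick gives the same order. Summing the six estimates and invoking Theorem \ref{energynorm} to bound $\|u-u_h\|_{1,\Omega}$ by $h$ times the data norms finishes the proof.
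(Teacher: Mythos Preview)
Your proposal is correct and follows essentially the same route as the paper: the same auxiliary minimization problem for $\phi$, the same six-term decomposition of $\|u-u_h\|^2$ obtained by testing with $t=u-u_h$ and invoking Lemma~\ref{pgo}, and the same treatment of each term, including the use of Lemma~\ref{lemm1} for term~III, the mixed-equation rewrite for term~IV, and the enriched control $\tilde u_h$ (with the boundary-average identity forcing $y^h_{u_h}=y^h_{\tilde u_h}$) together with Lemmas~\ref{lemmaenriching} and~\ref{crucial} for terms~V and~VI.
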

\section{\emph{A Posteriori} Error Analysis}\label{secapost}
In this section, a reliable and efficient \emph{a posteriori} error estimator is derived with the help of auxiliary problems and Helmholtz decomposition \cite{duranotes}. To this end, we define the following auxiliary problem: Find $(\Tilde{y}, \Tilde{\mathbf{p}}, \Tilde{z}, \Tilde{\mathbf{r}}, \Tilde{u}) \in W\times V \times W \times V \times Q$ such that 
\begin{align}
(\tilde{\mathbf{p}},\mathbf{v})-(\tilde{y},\text{div}\,\mathbf{v})&=-\langle \mathbf{v}\cdot n,u_h\rangle \ \ \forall \  \mathbf{v}\in V, \label{e1}\\
(w,\text{div}\,\tilde{\mathbf{p}})&=(f,w)\ \ \forall\  w\in W, \label{e2} \\
(\tilde{\mathbf{r}},\mathbf{v})-(\tilde{z},\text{div}\,\mathbf{v})&=0 \ \ \forall \ \mathbf{v}\in V , \label{e3}\\
(w,\text{div}\,\tilde{\mathbf{r}})&=(y_h-y_d,w) \ \ \forall\  w\in W , \label{e4}\\
\alpha a(\tilde{u},  q)&=-\langle \mathbf{r_h}\cdot n,q\rangle \ \  \forall \  q\in Q \label{e5}.
\end{align}
We note that $(y_h, \mathbf{p_h})$ and $(z_h, \mathbf{r_h})$ are the standard mixed finite element approximation of $(\tilde{y}, \tilde{\mathbf{p}})$ and $(\tilde{z}, \tilde{\mathbf{r}})$, respectively. And $u_h$ is the standard conforming finite element approximation of $\tilde{u}$.
By substracting the corresponding equations of the system \eqref{e1}-\eqref{e5} from \eqref{b10}-\eqref{b14}, we get the error equations:
\begin{align}
(\mathbf{p}-\tilde{\mathbf{p}},\mathbf{v})-(y-\Tilde{y},\text{div}\,\mathbf{v})&=-\langle \mathbf{v}\cdot n,u- u_h\rangle \ \ \forall \  \mathbf{v}\in V, \label{e6}\\
(w,\text{div}(\mathbf{p}-\tilde{\mathbf{p}}))&=0\ \ \forall\  w\in W, \label{e7} \\
(\mathbf{r}-\tilde{\mathbf{r}},\mathbf{v})-(z-\Tilde{z},\text{div}\,\mathbf{v})&=0 \ \ \forall \ \mathbf{v}\in V , \label{e8}\\
(w,\text{div}(\mathbf{r}-\tilde{\mathbf{r}}))&=(y-y_h,w) \ \ \forall\  w\in W , \label{e9}\\
\alpha a(u-\tilde{u}, q)&=-\langle (\mathbf{r}-\mathbf{r_h})\cdot n,q\rangle \ \  \forall \  q\in Q \label{e10}.
\end{align}
Below, we prove a lemma which is useful in deriving \emph{a posteriori} error estimates.
\begin{lemma}\label{reliability}
The following estimate holds:
\begin{multline*}
   \left( |u-u_h|_{1, \Omega}+\|z-z_h\|+\|y-y_h\|+\|\mathbf{p-p_h}\|_{H(\text{div}, \Omega)}+\|\mathbf{r-r_h}\|_{H(\text{div},\Omega)}\right)\\ \leq C\left(|\tilde{u}-u_h|_{1, \Omega}+\|\tilde{y}-y_h\|+{ \|\tilde{z}-z_h\|}+\|\mathbf{\tilde{r}-r_h}\|_{H(\text{div}, \Omega)}+\|\mathbf{\tilde{p}-p_h}\|_{H(\text{div},\Omega)}\right).
\end{multline*}
\begin{proof}
Take $q=u-\tilde{u} \in Q$ in \eqref{e10} to get
\begin{equation*}
\begin{split}
    \alpha|u-\tilde{u}|^2_{1, \Omega}&= -\langle (\mathbf{r-r_h})\cdot n,u-\tilde{u}\rangle\\
    & =-\langle  (\mathbf{r-\tilde{r}})\cdot n,u-\tilde{u}\rangle-\langle  (\mathbf{\tilde{r}-r_h})\cdot n,u-\tilde{u}\rangle
    \\ &= -\langle  (\mathbf{r-\tilde{r}})\cdot n,u-u_h\rangle -\langle (\mathbf{r-\tilde{r}})\cdot n,u_h-\tilde{u}\rangle-\langle (\mathbf{\tilde{r}-r_h})\cdot n,u-\tilde{u}\rangle.
    \end{split}
    \end{equation*}
Using \eqref{e6} and \eqref{e8} for $\mathbf{v}=\mathbf{r-\tilde{r}}$ and $\mathbf{v}=\mathbf{p-\tilde{p}}$, respectively,  taking into account \eqref{e9} and \eqref{e7}, we find
\begin{equation*}
    \begin{split}
        \alpha|u-\tilde{u}|^2_{1, \Omega}&=(\mathbf{p-\tilde{p}}, \mathbf{r-\tilde{r}})-(y-\tilde{y},\text{div}(\mathbf{r-\tilde{r}}))-\langle (\mathbf{r-\tilde{r}})\cdot n,u_h-\tilde{u}\rangle-\langle (\mathbf{\tilde{r}-r_h})\cdot n,u-\tilde{u}\rangle\\&=(z-\tilde{z}, \text{div}(\mathbf{p-\tilde{p}}))-(y-y_h, y-\tilde{y})-\langle (\mathbf{r-\tilde{r}})\cdot n,u_h-\tilde{u}\rangle-\langle (\mathbf{\tilde{r}-r_h})\cdot n,u-\tilde{u}\rangle\\&
        =-(y-y_h, y-\tilde{y})-\langle (\mathbf{r-\tilde{r}})\cdot n,u_h-\tilde{u}\rangle-\langle (\mathbf{\tilde{r}-r_h})\cdot n,u-\tilde{u}\rangle
        \\ &=-(y-\tilde{y}, y-\tilde{y})-(\tilde{y}-y_h, y-\tilde{y})-\langle (\mathbf{r-\tilde{r}})\cdot n,u_h-\tilde{u}\rangle-\langle  (\mathbf{\tilde{r}-r_h})\cdot n,u-\tilde{u}\rangle.
    \end{split}
\end{equation*}
Adding $\|y-\tilde{y}\|^2$ to both the sides of the last equation to find
\begin{equation*}
    \begin{split}
        \alpha|u-\tilde{u}|^2_{1, \Omega}+\|y-\tilde{y}\|^2&=(y_h-\tilde{y}, y-\tilde{y})-\langle (\mathbf{r-r_h})\cdot n,u_h-\tilde{u}\rangle\\&\quad -\langle (\mathbf{r_h-\tilde{r}})\cdot n,u_h-\tilde{u}\rangle-\langle (\mathbf{\tilde{r}-r_h})\cdot n,u-\tilde{u}\rangle.
    \end{split}
\end{equation*}
Upon taking $q=u_h-\tilde{u} \in Q$ in \eqref{e10} and applying Lemma \ref{l1}, we get 
\begin{align}
        \alpha|u-\tilde{u}|^2_{1, \Omega}+\|y-\tilde{y}\|^2 &=(\tilde{y}-y, \tilde{y}-y_h)+\alpha a(u-\tilde{u}, u_h-\tilde{u})\notag\\&\quad+\langle (\mathbf{r_h-\tilde{r}})\cdot n,\tilde{u}-u_h\rangle +\langle (\mathbf{\tilde{r}-r_h})\cdot n,\tilde{u}-u\rangle
        \notag\\&=(\tilde{y}-y, \tilde{y}-y_h)+\alpha a(u-\tilde{u}, u_h-\tilde{u})\notag\\&\quad+\langle (\mathbf{r_h-\tilde{r}})\cdot n,u-u_h\rangle\notag\\
        &=(\tilde{y}-y, \tilde{y}-y_h)+\alpha a(u-\tilde{u}, u_h-\tilde{u})\notag\\&\quad+\int_{\Omega}\text{div}\,(\mathbf{r_h-\tilde{r}})(u-u_h)\,dx+\int_{\Omega}(\mathbf{r_h-\tilde{r}})\cdot \nabla(u-u_h)\,dx\notag\\
 &=(\tilde{y}-y, \tilde{y}-y_h)+\alpha a(u-\tilde{u}, u_h-\tilde{u})+\int_{\Omega}(\mathbf{r_h-\tilde{r}})\cdot \nabla(u-u_h)\,dx\notag\\&\quad+\int_{\Omega}\text{div}\,(\mathbf{r_h-\tilde{r}})((u-u_h)-P_h(u-u_h))\,dx,\label{e0}
\end{align}
where in the last equality we have used \eqref{e4} and \eqref{c7}.
Now, taking $w=\text{div}(\mathbf{r-\tilde{r}}) \in W$ in \eqref{e9}, we find
\begin{equation*}
    \|\text{div}(\mathbf{r-\tilde{r}})\|^2=(y-y_h, \text{div}(\mathbf{r-\tilde{r}})),
\end{equation*}
which yields
\begin{equation}\label{e12}
    \|\text{div}(\mathbf{r-\tilde{r}})\| 
    \leq \|y-y_h\|.
\end{equation}
Now, take $v=\mathbf{r-\tilde{r}} \in V$ in \eqref{e8} to get
\begin{equation}\label{e13}
    \|\mathbf{r-\tilde{r}}\|^2=(z-\tilde{z}, \text{div}(\mathbf{r-\tilde{r}})).
\end{equation}
Since $z-\tilde{z} \in W$, there exists $\mathbf{v}\in H^1(\Omega)^2$ such that $\text{div}\,\mathbf{v}=z-\tilde{z}$ and 
\begin{equation}\label{e14}
    \|\mathbf{v}\|_{1, \Omega} \leq C\|z-\tilde{z}\|.
\end{equation}
Further, using \eqref{e8}, \eqref{e14} and Cauchy-Schwarz inequality, we have
\begin{equation*}
    \begin{split}
        \|z-\tilde{z}\|^2&=(\mathbf{r-\tilde{r}}, \mathbf{v})\leq \|\mathbf{r-\tilde{r}}\|\|\mathbf{v}\| \leq C\|\mathbf{r-\tilde{r}}\|\|z-\tilde{z}\|,
    \end{split}
\end{equation*}
which yields
\begin{equation}\label{e15}
    \|z-\tilde{z}\|\leq C\|\mathbf{r-\tilde{r}}\|.
\end{equation}
 In view of \eqref{e15} and \eqref{e13}, we get 
\begin{equation*}
    \|\mathbf{r-\tilde{r}}\|\leq C\|\text{div}(\mathbf{r-\tilde{r}})\|.
\end{equation*}
Therefore,
\begin{equation}\label{e17}
    \|\mathbf{r-\tilde{r}}\|_{H(\text{div},\Omega)} \leq C \|\text{div}(\mathbf{r-\tilde{r}})\|.
\end{equation}
Combining  \eqref{e15}, \eqref{e17} and \eqref{e12}, we get 
\begin{align}
    \|z-\tilde{z}\| &\leq C\|\mathbf{r-\tilde{r}}\|_{H(\text{div},\Omega)}
    \leq C\|y-y_h\| \label{e18} \\
    &\leq C\left( \|y-\tilde{y}\|+\|\tilde{y}-y_h\|\right).\label{e22}
\end{align}
Using Cauchy-Schwarz inequality, Young's inequality, Lemma \ref{rtinter} in \eqref{e0}, we find 
\begin{equation}\label{e21}
    \begin{split}
        \alpha|u-\tilde{u}|^2_{1, \Omega}+\|y-\tilde{y}\|^2&\leq \delta_1\|y-\tilde{y}\|^2 +\frac{C_1}{\delta_1}\|\tilde{y}-y_h\|^2 + \alpha \delta_2|u-\tilde{u}|^2_{1, \Omega}\\
        & \quad+\frac{C_2 \alpha}{\delta_2}|u_h-\tilde{u}|^2_{1, \Omega}+ \delta_3|u-u_h|^2_{1,\Omega}+\frac{C_3}{\delta_3}\|\mathbf{r_h-\tilde{r}}\|^2_{H(\text{div}, \Omega)}.
    \end{split}
\end{equation}
From \eqref{e21}, \eqref{e22} and choosing $\delta_1, \delta_2, \delta_3$ small enough to get
\begin{align}\label{sid1}
|u-\tilde{u}|^2_{1, \Omega}+\|z-\tilde{z}\|^2+\|y-\tilde{y}\|^2& \leq C\left(|\tilde{u}-u_h|^2_{1, \Omega}+\|\tilde{y}-y_h\|^2+\|\mathbf{\tilde{r}-r_h}\|^2_{H(\text{div}, \Omega)}\right).
\end{align}
A use of triangle inequality and \eqref{sid1} yields
\begin{align}\label{e122}
|u-u_h|_{1, \Omega}+\|z-z_h\|+\|y-y_h\|&\leq C\left(|\tilde{u}-u_h|_{1, \Omega}+\|\tilde{y}-y_h\|+{ \|\tilde{z}-z_h\|}\right.\notag\\
&\quad\left.+\|\mathbf{\tilde{r}-r_h}\|_{H(\text{div}, \Omega)}\right).
\end{align}
Now we estimate $\|\mathbf{r-r_h}\|_{H(\text{div},\Omega)}$ and $\|\mathbf{p-p_h}\|_{H(\text{div}, \Omega)}$ as follows.
A use of triangle inequality, \eqref{e18} and \eqref{e122} gives
\begin{align}
    \|\mathbf{r-r_h}\|_{H(\text{div},\Omega)}&\leq  \|\mathbf{r-\tilde{r}}\|_{H(\text{div},\Omega)}+\|\mathbf{\tilde{r}-r_h}\|_{H(\text{div},\Omega)}\notag\\
    &\leq C\left(\|y-y_h\|+\|\mathbf{\tilde{r}-r_h}\|_{H(\text{div},\Omega)}\right)\notag\\
    &\leq C\left(|\tilde{u}-u_h|_{1, \Omega}+\|\tilde{y}-y_h\|+{ \|\tilde{z}-z_h\|}+\|\mathbf{\tilde{r}-r_h}\|_{H(\text{div}, \Omega)}\right).\label{e123}
    \end{align}
 Put $\mathbf{v}=\mathbf{p-\tilde{p}}$ in \eqref{e6} and using \eqref{e7}, Lemma \ref{l1} and Cauchy-Schwarz inequality, we find 
 \begin{align}
      \|\mathbf{p-\tilde{p}}\|^2_{H(\text{div},\Omega)}= \|\mathbf{p-\tilde{p}}\|^2&=\langle (\mathbf{p-\tilde{p}})\cdot n,u_h-u\rangle\notag\\
      &=\int_{\Omega}\text{div}\,(\mathbf{p-\tilde{p}})(u_h-u)\,dx+\int_{\Omega}(\mathbf{p-\tilde{p}})\cdot \nabla(u_h-u)\,dx\notag\\
     & =\int_{\Omega}(\mathbf{p-\tilde{p}})\cdot \nabla(u_h-u)\,dx\notag\\
    & \leq |u-u_h|_{1, \Omega}\|\mathbf{p-\tilde{p}}\|.\label{sid2}
\end{align}
By triangle inequality, \eqref{sid2} and \eqref{e122}, we find
    \begin{align}
        \|\mathbf{p-p_h}\|_{H(\text{div}, \Omega)}&\leq \|\mathbf{p-\tilde{p}}\|_{H(\text{div},\Omega)}+ \|\mathbf{\tilde{p}-p_h}\|_{H(\text{div}, \Omega)}\notag\\
        &\leq |u-u_h|_{1, \Omega}+\|\mathbf{\tilde{p}-p_h}\|_{H(\text{div}, \Omega)}\notag\\
        &\leq C\left(|\tilde{u}-u_h|_{1, \Omega}+\|\tilde{y}-y_h\|+{ \|\tilde{z}-z_h\|}+\|\mathbf{\tilde{r}-r_h}\|_{H(\text{div}, \Omega)}+\|\mathbf{\tilde{p}-p_h}\|_{H(\text{div}, \Omega)}\right).\label{sid33}
    \end{align}
The proof is completed by combining \eqref{e122}, \eqref{e123} and \eqref{sid33}.
 \end{proof}
  \end{lemma}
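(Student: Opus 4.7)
The plan is to bound every term on the left-hand side by inserting, via the triangle inequality, the auxiliary solution $(\tilde{y},\tilde{\mathbf{p}},\tilde{z},\tilde{\mathbf{r}},\tilde{u})$, and then to show that the difference between the continuous and the auxiliary solutions is itself controlled by the right-hand side. In other words, I need the bound
\begin{equation*}
|u-\tilde{u}|_{1,\Omega}+\|y-\tilde{y}\|+\|z-\tilde{z}\|+\|\mathbf{p}-\tilde{\mathbf{p}}\|_{H(\text{div},\Omega)}+\|\mathbf{r}-\tilde{\mathbf{r}}\|_{H(\text{div},\Omega)}\lesssim \mathrm{RHS},
\end{equation*}
after which a final triangle inequality finishes the lemma. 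The first step is therefore to derive the error system \eqref{e6}--\eqref{e10} by subtracting the auxiliary problem from the continuous optimality system.

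The heart of the argument is a careful manipulation of the control error equation \eqref{e10}. Testing with $q=u-\tilde{u}\in Q$ yields $\alpha|u-\tilde{u}|_{1,\Omega}^{2}=-\langle(\mathbf{r}-\mathbf{r_h})\cdot n,u-\tilde{u}\rangle$, which I split into $-\langle(\mathbf{r}-\tilde{\mathbf{r}})\cdot n,u-\tilde{u}\rangle-\langle(\tilde{\mathbf{r}}-\mathbf{r_h})\cdot n,u-\tilde{u}\rangle$. For the first pairing I further split $u-\tilde{u}=(u-u_h)+(u_h-\tilde{u})$, and then use equation \eqref{e6} with $\mathbf{v}=\mathbf{r}-\tilde{\mathbf{r}}$ combined with equations \eqref{e7}, \eqref{e8} (choosing $\mathbf{v}=\mathbf{p}-\tilde{\mathbf{p}}$) and \eqref{e9}, to convert the boundary duality into the interior product $-(y-y_h,y-\tilde{y})$. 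Adding $\|y-\tilde{y}\|^{2}$ to both sides then yields $\alpha|u-\tilde{u}|_{1,\Omega}^{2}+\|y-\tilde{y}\|^{2}$ equal to a sum containing $(y_h-\tilde{y},y-\tilde{y})$, the control-pairing with $u_h-\tilde{u}$ rewritten via \eqref{e10} (with $q=u_h-\tilde{u}$), and the remaining $\langle(\tilde{\mathbf{r}}-\mathbf{r_h})\cdot n, u-u_h\rangle$ term. This last boundary term is the main obstacle: I convert it into an interior expression by applying Lemma \ref{l1} and using that $\mathrm{div}(\tilde{\mathbf{r}}-\mathbf{r_h})$ tested against $w_h\in W_h$ vanishes (by \eqref{e4} and \eqref{c7}), so that I can replace $u-u_h$ by $(u-u_h)-P_h(u-u_h)$ in the divergence term, leaving a gradient piece and a projection-error piece that are both controlled by $|u-u_h|_{1,\Omega}\|\tilde{\mathbf{r}}-\mathbf{r_h}\|_{H(\mathrm{div},\Omega)}$.

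Next I estimate the flux and adjoint state. From \eqref{e8}--\eqref{e9} together with the surjectivity of $\mathrm{div}\colon H^{1}(\Omega)^{2}\to L^{2}(\Omega)$, I obtain the chain
\begin{equation*}
\|z-\tilde{z}\|\lesssim \|\mathbf{r}-\tilde{\mathbf{r}}\|_{H(\mathrm{div},\Omega)}\lesssim \|\mathrm{div}(\mathbf{r}-\tilde{\mathbf{r}})\|\lesssim \|y-y_h\|\le \|y-\tilde{y}\|+\|\tilde{y}-y_h\|.
\end{equation*}
Substituting this into the boundary-term estimate, applying Cauchy--Schwarz and Young's inequality with small parameters $\delta_{i}$, and absorbing the $|u-\tilde{u}|_{1,\Omega}^{2}$ and $\|y-\tilde{y}\|^{2}$ terms back into the left-hand side produces the desired bound on $|u-\tilde{u}|_{1,\Omega}+\|y-\tilde{y}\|+\|z-\tilde{z}\|+\|\mathbf{r}-\tilde{\mathbf{r}}\|_{H(\mathrm{div},\Omega)}$.

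It then remains to handle the primal flux. Testing \eqref{e6} with $\mathbf{v}=\mathbf{p}-\tilde{\mathbf{p}}$ and using \eqref{e7} gives $\|\mathbf{p}-\tilde{\mathbf{p}}\|_{H(\mathrm{div},\Omega)}^{2}=\langle(\mathbf{p}-\tilde{\mathbf{p}})\cdot n,u_h-u\rangle$; Lemma \ref{l1} together with integration by parts (the divergence of $\mathbf{p}-\tilde{\mathbf{p}}$ is zero on $L^{2}$) converts this into $\int_{\Omega}(\mathbf{p}-\tilde{\mathbf{p}})\cdot\nabla(u_h-u)\,dx$, which is bounded by $|u-u_h|_{1,\Omega}\|\mathbf{p}-\tilde{\mathbf{p}}\|$. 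Dividing through yields $\|\mathbf{p}-\tilde{\mathbf{p}}\|_{H(\mathrm{div},\Omega)}\lesssim |u-u_h|_{1,\Omega}$, which by triangle inequality is bounded by the already-controlled quantities plus $|\tilde{u}-u_h|_{1,\Omega}$. A final round of triangle inequalities assembles everything into the stated estimate. The subtle step will be the integration-by-parts trick using the $L^{2}$-projection $P_h$ in handling the boundary pairing $\langle(\tilde{\mathbf{r}}-\mathbf{r_h})\cdot n,u-u_h\rangle$, since this is where the jump from a duality pairing to a genuine $H^{1}$-seminorm of $u-u_h$ is extracted.
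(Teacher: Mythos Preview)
Your proposal is correct and follows essentially the same approach as the paper: the same test functions in \eqref{e10}, the same splitting of the duality pairings, the same use of \eqref{e6}--\eqref{e9} to convert the boundary term into $-(y-y_h,y-\tilde{y})$, the same $P_h$-projection trick via \eqref{e4} and \eqref{c7}, the same chain $\|z-\tilde{z}\|\lesssim\|\mathbf{r}-\tilde{\mathbf{r}}\|_{H(\mathrm{div})}\lesssim\|y-y_h\|$, and the same treatment of $\|\mathbf{p}-\tilde{\mathbf{p}}\|_{H(\mathrm{div})}$. You have even correctly flagged the $P_h$ step as the subtle point.
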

  \par \noindent
 Next, we introduce some notations which are required for further analysis. Let $t$ be the unit tangent vector on $e\in \cE_h$ oriented clockwise. For an interior side $e \in \cE_h^i$ shared by two neighboring triangles $T_1$ and $T_2$ with corresponding unit tangent vectors $t_1$ and $t_2$ on $e$, we define the tangential jump of $\mathbf{v}\in H(\text{div}, \Omega)$ across the interior edge $e$ as follows:
  $$\sjump{\mathbf{v} \cdot t}= \mathbf{v}|_{T_1} \cdot t_1- \mathbf{v}|_{T_2}\cdot t_2.$$
  For a boundary edge $e\in \cE_h^b$, there is a triangle $T \in \cT_h$ such that $e=\partial T\cap \Gamma.$  The tangential jump across the boundary edge $e$ is defined as follows:
  $$\sjump{\mathbf{v} \cdot t} =\mathbf{v}|_{T} \cdot t.$$
  For $\mathbf{v}=(v_1, v_2) \in H^1(\Omega)^2$, $w\in H^1(\Omega)$, we define  $\text{rot}\,\mathbf{v}$  and $\textbf{curl}\,w$ as follows:
  $$ \text{rot}\,\mathbf{v}=\frac{\partial v_2}{\partial x_1}-\frac{\partial v_1}{\partial x_2}$$
  $$\textbf{curl}\,w=\left(\frac{\partial w}{\partial x_2}, -\frac{\partial w}{\partial x_1}\right).$$
  The following lemma is crucial to establish the reliability of the error estimator. The error is decomposed by using a generalized Helmholtz decomposition under the assumption that the domain is simply connected. 
  \begin{lemma}\label{helmhotlz}
For $\mathbf{v}\in W \times W$, there exist $w_1\in H^1_0(\Omega)$ and $w_2 \in Q$ such that
 \begin{equation*}
     \mathbf{v}=\nabla w_1+\mathbf{curl}\,w_2.
 \end{equation*}
 Moreover, the following estimate holds:
 \begin{equation*}
     \|\nabla w_1\|+\|\nabla w_2\|\lesssim \|\mathbf{v}\|.
 \end{equation*}
 \begin{proof}
 For the proof, we refer the readers to \cite[Lemma 4.1, p. 27]{duranotes}.
 \end{proof}
  \end{lemma}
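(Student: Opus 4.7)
The plan is to obtain $w_1$ from a variational Poisson problem and then realize the residual $\mathbf{v}-\nabla w_1$ as a $\mathbf{curl}$-field using the simple connectedness of $\Omega$.

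First I would define $w_1 \in H^1_0(\Omega)$ as the unique weak solution of
\[
\int_\Omega \nabla w_1 \cdot \nabla \phi \, dx = \int_\Omega \mathbf{v} \cdot \nabla \phi \, dx \quad \forall \, \phi \in H^1_0(\Omega).
\]
Existence, uniqueness, and the bound $\|\nabla w_1\| \leq \|\mathbf{v}\|$ follow from the Lax--Milgram lemma together with the Cauchy--Schwarz inequality applied to the right-hand side; geometrically, $\nabla w_1$ is the $L^2$-orthogonal projection of $\mathbf{v}$ onto $\nabla H^1_0(\Omega)$.

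Next, set $\boldsymbol{\psi} := \mathbf{v} - \nabla w_1 \in L^2(\Omega)^2$. By construction the displayed equation says $\int_\Omega \boldsymbol{\psi} \cdot \nabla \phi \, dx = 0$ for every $\phi \in H^1_0(\Omega)$, that is, $\mathrm{div}\,\boldsymbol{\psi} = 0$ in $\mathcal{D}'(\Omega)$. Writing $\boldsymbol{\psi} = (\psi_1,\psi_2)$ and considering the rotated field $\tilde{\boldsymbol{\psi}} := (-\psi_2, \psi_1)$, a direct distributional computation gives $\mathrm{rot}\,\tilde{\boldsymbol{\psi}} = \partial_{x_1}\psi_1 + \partial_{x_2}\psi_2 = \mathrm{div}\,\boldsymbol{\psi} = 0$. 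Since $\Omega$ is simply connected, the $L^2$ version of the Poincar\'e lemma (equivalently, de Rham's theorem on a simply connected planar domain) yields a potential $w_2 \in H^1(\Omega)$ with $\nabla w_2 = \tilde{\boldsymbol{\psi}}$; unwinding the definition of $\mathbf{curl}$ then gives $\mathbf{curl}\,w_2 = \boldsymbol{\psi}$, and hence the decomposition $\mathbf{v} = \nabla w_1 + \mathbf{curl}\,w_2$.

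For the stability bound, the orthogonality $(\nabla w_1, \boldsymbol{\psi}) = 0$ (which is exactly the defining variational equation extended to $\overline{\nabla H^1_0(\Omega)}$) gives the Pythagorean identity $\|\nabla w_1\|^2 + \|\boldsymbol{\psi}\|^2 = \|\mathbf{v}\|^2$, while $\|\nabla w_2\| = \|\tilde{\boldsymbol{\psi}}\| = \|\boldsymbol{\psi}\|$, so both $\|\nabla w_1\|$ and $\|\nabla w_2\|$ are dominated by $\|\mathbf{v}\|$, producing the claimed estimate. The only genuinely delicate step is the existence of the potential $w_2$: for classical smooth fields this is routine, but to handle the merely $L^2$ regularity of $\boldsymbol{\psi}$ one should mollify $\boldsymbol{\psi}$ inside $\Omega$, construct potentials via line integrals using simple connectedness (which guarantees path-independence once $\mathrm{rot}=0$), and then pass to the limit using the uniform bound $\|\nabla w_2^\varepsilon\| = \|\boldsymbol{\psi}^\varepsilon\| \leq \|\boldsymbol{\psi}\|$ combined with the Poincar\'e inequality (after normalizing $w_2^\varepsilon$ to have zero mean).
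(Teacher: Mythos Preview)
Your argument is correct and is precisely the standard construction of the $L^2$ Helmholtz decomposition on a simply connected planar domain: solve a Poisson problem for $w_1$, observe that the residual is divergence-free, and invoke the stream-function existence result (de Rham/Poincar\'e lemma in $L^2$) to produce $w_2$. The paper itself does not give a proof but simply refers to \cite[Lemma~4.1]{duranotes}; what you have written is essentially the argument found there, so there is nothing to contrast. One minor remark: the orthogonality $(\nabla w_1,\boldsymbol{\psi})=0$ follows immediately by taking $\phi=w_1$ in your defining variational equation, so no closure argument is needed.
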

We now define the estimator terms.

\par \noindent
The volume residuals are defined as follows
\begin{align*}
    \eta_{1,T}&=\|y_h-y_d-\text{div}\,\mathbf{r_h}\|_{L^2(T)},   \quad \quad \quad \eta_{1}=\left(\sum_{T\in \cT_h}\eta_{1,T}^2\right)^{\frac{1}{2}},\\
    \eta_{2,T}&=h_T\|\text{rot}\,\mathbf{r_h}\|_{L^2(T)}, \quad   \ \ \quad  \quad \quad \quad \ \ \eta_{2}=\left(\sum_{T\in \cT_h}\eta_{2,T}^2\right)^{\frac{1}{2}},\\
     \eta_{3,T}&=h_T\|\text{rot}\,\mathbf{p_h}\|_{L^2(T)}, \quad \quad \ \  \quad \quad \quad \ \ \eta_{3}=\left(\sum_{T\in \cT_h}\eta_{3,T}^2\right)^{\frac{1}{2}},\\
    \eta_{4, T}&=\|f-\text{div}\,\mathbf{p_h}\|_{L^2(T)},  \quad  \ \  \quad \quad \quad \quad \eta_{4}=\left(\sum_{T\in \cT_h}\eta_{4,T}^2\right)^{\frac{1}{2}},\\
    \eta_{5, T}&=h_T\|\mathbf{p_h}+\nabla y_h\|_{L^2(T)},    \ \ \quad \quad \quad  \quad \eta_{5}=\left(\sum_{T\in \cT_h}\eta_{5,T}^2\right)^{\frac{1}{2}},\\
    \eta_{6, T}&=h_T\|\mathbf{r_h}+\nabla z_h\|_{L^2(T)},   \ \ \ \quad \quad \quad  \quad \eta_{6}=\left(\sum_{T\in \cT_h}\eta_{6,T}^2\right)^{\frac{1}{2}}.
\end{align*}
The edge residuals are defined by
\begin{align*}
    \eta_{1,e}&=|e|^{\frac{1}{2}}\|\sjump{\mathbf{r_h}\cdot t}\|_{L^2(e)}, \quad \quad \quad \quad \quad \quad \eta_{7}=\left(\sum_{e\in \cE_h^i}\eta_{1,e}^2\right)^{\frac{1}{2}},\\
    \eta_{2,e}&=|e|^{\frac{1}{2}}\|\sjump{\mathbf{p_h}\cdot t}\|_{L^2(e)}, \quad \quad \quad \quad \quad \quad \eta_{8}=\left(\sum_{e\in \cE_h^i}\eta_{2,e}^2\right)^{\frac{1}{2}},\\
     \eta_{3,e}&=|e|^{\frac{1}{2}}\|\sjump{y_h}\|_{L^2(e)}, \quad \quad \quad \quad \quad \quad \quad \eta_{9}=\left(\sum_{e\in \cE_h^i}\eta_{3,e}^2\right)^{\frac{1}{2}},\\
     \eta_{4,e}&=|e|^{\frac{1}{2}}\|\sjump{z_h}\|_{L^2(e)}, \quad \quad \quad \quad \quad \quad \quad \eta_{10}=\left(\sum_{e\in \cE_h^i}\eta_{4,e}^2\right)^{\frac{1}{2}},\\
      \eta_{5,e}&=\alpha|e|^{\frac{1}{2}}\left\|\jump{\frac{\partial u_h}{\partial n}}\right\|_{L^2(e)}, \ \quad \quad \quad \quad \eta_{11}=\left(\sum_{e\in \cE_h^i}\eta_{5,e}^2\right)^{\frac{1}{2}},
      \end{align*}
      and the boundary residuals are given by
      \begin{align*}
      \eta_{6,e}&=|e|^{\frac{1}{2}}\left\|\alpha \dfrac{\partial u_h}{\partial n}+\mathbf{r_h}\cdot n\right\|_{L^2(e)}, \ \ \quad \quad \eta_{12}=\left(\sum_{e\in \cE_h^b}\eta_{6,e}^2\right)^{\frac{1}{2}},\\
      \eta_{7,e}&=|e|^{\frac{1}{2}}\left\| \dfrac{\partial u_h}{\partial t}+\mathbf{p_h}\cdot t\right\|_{L^2(e)}, \ \ \quad  \quad \quad \eta_{13}=\left(\sum_{e\in \cE_h^b}\eta_{7,e}^2\right)^{\frac{1}{2}},\\
      \eta_{8,e}&=|e|^{\frac{1}{2}}\left\| u_h-y_h\right\|_{L^2(e)}, \ \ \quad \quad \quad \quad \quad \eta_{14}=\left(\sum_{e\in \cE_h^b}\eta_{8,e}^2\right)^{\frac{1}{2}},\\
      \eta_{9,e}&=|e|^{\frac{1}{2}}\left\| z_h\right\|_{L^2(e)}, \ \ \quad \quad \quad \quad \quad \quad  \quad\eta_{15}=\left(\sum_{e\in \cE_h^b}\eta_{9,e}^2\right)^{\frac{1}{2}},\\
      \eta_{10,e}&=|e|^{\frac{1}{2}}\|\mathbf{r_h}\cdot t\|_{L^2(e)}, \quad \quad \quad \quad \quad \quad \quad \eta_{16}=\left(\sum_{e\in \cE_h^b}\eta_{10,e}^2\right)^{\frac{1}{2}}.
\end{align*}
 \par \noindent
 The total error estimator is given by
\begin{equation*}
    \eta_h=\left(\eta_{1}^2+\eta_{2}^2+\eta_{3}^2+\eta_{4}^2+\eta_{5}^2+\eta_{6}^2+\eta_{7}^2+\eta_{8}^2+\eta_{9}^2+\eta_{10}^2+\eta_{11}^2+\eta_{12}^2+\eta_{13}^2 +\eta_{14}^2+\eta_{15}^2+\eta_{16}^2\right)^{\frac{1}{2}}.
\end{equation*}
In the next theorem, we prove the reliability of the error estimator $\eta_h$. 
\begin{theorem}\textbf{(Reliability of the error estimator)}\label{reliestimate}
It holds that
\begin{equation*}
    |u-u_h|_{1, \Omega}+\|z-z_h\|+\|y-y_h\|+ \|\mathbf{p-p_h}\|_{H(\text{div}, \Omega)}+ \|\mathbf{r-r_h}\|_{H(\text{div}, \Omega)} \lesssim \eta_h.
\end{equation*}
\end{theorem}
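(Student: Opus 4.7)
By Lemma~\ref{reliability} it suffices to bound the five auxiliary errors $|\tilde u - u_h|_{1,\Omega}$, $\|\tilde y - y_h\|$, $\|\tilde z - z_h\|$, $\|\mathbf{\tilde p}-\mathbf{p_h}\|_{H(\text{div},\Omega)}$ and $\|\mathbf{\tilde r}-\mathbf{r_h}\|_{H(\text{div},\Omega)}$ by $\eta_h$. The payoff of passing to system \eqref{e1}--\eqref{e5} is complete decoupling: $u_h$ is the conforming Galerkin approximation of the Neumann problem \eqref{e5}, while $(y_h,\mathbf{p_h})$ and $(z_h,\mathbf{r_h})$ are standard lowest-order Raviart--Thomas approximations of \eqref{e1}--\eqref{e2} and \eqref{e3}--\eqref{e4} respectively; thus I would run three independent residual analyses and aggregate.

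For $|\tilde u-u_h|_{1,\Omega}$ I would apply the standard residual argument: subtracting \eqref{c8} from \eqref{e5} gives Galerkin orthogonality $\alpha\,a(\tilde u-u_h,q_h)=0$ on $Q_h$, and testing with $(\tilde u-u_h)-\chi_h$ where $\chi_h$ is a Cl\'ement interpolant (Lemma~\ref{clement:approx}) followed by element-wise integration by parts kills the interior Laplacian (since $u_h$ is piecewise linear), leaving only the interior normal-jumps and the boundary residual $\alpha\,\partial u_h/\partial n+\mathbf{r_h}\cdot n$; Cauchy--Schwarz plus the Cl\'ement estimates yield $\eta_{11}+\eta_{12}$.

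For the mixed portion, the divergence contributions are immediate from \eqref{e2} and \eqref{e4}: $\text{div}(\mathbf{\tilde p}-\mathbf{p_h})=f-\text{div}\,\mathbf{p_h}$ and $\text{div}(\mathbf{\tilde r}-\mathbf{r_h})=(y_h-y_d)-\text{div}\,\mathbf{r_h}$ pointwise in $L^2$, giving $\eta_4$ and $\eta_1$. For $\|\mathbf{\tilde p}-\mathbf{p_h}\|$ I would apply the Helmholtz decomposition of Lemma~\ref{helmhotlz}, writing $\mathbf{\tilde p}-\mathbf{p_h}=\nabla\phi+\mathbf{curl}\,\psi$ with $\phi\in H^1_0(\Omega),\ \psi\in Q$. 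The gradient component is controlled by integration by parts and Poincar\'e, producing $\eta_4$. For the curl component, the key observation is that $\mathbf{curl}\,\psi_h\in V_h$ for every continuous piecewise linear $\psi_h$ (it is piecewise constant with continuous normal trace across edges), so the mixed Galerkin orthogonality allows replacing $\psi$ by $\psi-\psi_h$; element-wise integration by parts on $\mathbf{curl}(\psi-\psi_h)$, together with $\mathbf{\tilde p}=-\nabla\tilde y$ (whence $\text{rot}\,\mathbf{\tilde p}=0$ in $\Omega$ and $\mathbf{\tilde p}\cdot t=-\partial u_h/\partial t$ on $\Gamma$ since $\tilde y|_\Gamma=u_h$), reduces the integral to element rotations of $\mathbf{p_h}$, interior tangential jumps, and the boundary quantity $\mathbf{p_h}\cdot t+\partial u_h/\partial t$; Cl\'ement estimates on $\psi$ then deliver $\eta_3+\eta_8+\eta_{13}$. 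The parallel argument for $\mathbf{\tilde r}-\mathbf{r_h}$ exploits $\tilde z|_\Gamma=0$, contributing $\eta_2+\eta_7+\eta_{16}$.

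For the scalar $L^2$-errors $\|\tilde y-y_h\|$ and $\|\tilde z-z_h\|$ I would use Aubin--Nitsche duality. Solving $-\Delta\xi=\tilde y-y_h$ with $\xi|_\Gamma=0$, convexity gives $\|\xi\|_{2,\Omega}\lesssim\|\tilde y-y_h\|$; testing \eqref{e1} against $\mathbf{w}:=-\nabla\xi$ and passing through the Raviart--Thomas interpolant $\Pi_h\mathbf{w}$ via the mixed Galerkin orthogonality, combined with the element-wise identity $\nabla y_h\equiv 0$, decomposes $\|\tilde y-y_h\|^2$ into a volume contribution bounded by $\|\mathbf{p_h}+\nabla y_h\|\,\|\mathbf{w}-\Pi_h\mathbf{w}\|$ (producing $\eta_5$), interior jump contributions $\int_e[y_h]\mathbf{w}\cdot n$ (producing $\eta_9$), and boundary contributions $\int_e(u_h-y_h)\mathbf{w}\cdot n$ (producing $\eta_{14}$). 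The mirror argument on \eqref{e3}--\eqref{e4} produces $\eta_6+\eta_{10}+\eta_{15}$. The most delicate step, I expect, is the Helmholtz-decomposition bound on the flux: one must use the extra regularity $\mathbf{\tilde p}=-\nabla\tilde y$ with $\tilde y|_{\Gamma_i}\in H^{3/2}(\Gamma_i)$ to make rigorous sense of the tangential-trace identities $\mathbf{\tilde p}\cdot t=-\partial u_h/\partial t$ and $\mathbf{\tilde r}\cdot t=0$ on $\Gamma$, and the interior-versus-boundary bookkeeping must match the estimator definitions exactly so that the hidden constants remain $h$-independent.
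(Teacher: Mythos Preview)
Your proposal is correct and follows the same route as the paper: reduce via Lemma~\ref{reliability}, then treat the control error by the standard conforming residual argument, the flux errors by the Helmholtz decomposition of Lemma~\ref{helmhotlz}, and the scalar errors by duality, matching the paper's estimates \eqref{e90}--\eqref{e46}. One small bookkeeping point: the paper's bound \eqref{e92} for $\|\tilde y-y_h\|$ also carries $\|\mathbf{\tilde p}-\mathbf{p_h}\|$ on the right (and \eqref{lme92} carries $\|\mathbf{\tilde r}-\mathbf{r_h}\|$), a term your duality sketch suppresses but which reappears once you expand $(\mathbf{\tilde p},\mathbf{w})$; since the flux error is bounded independently this does not affect the final reliability estimate.
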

\begin{proof} In view of Lemma \ref{reliability}, it suffices to estimate $|\tilde{u}-u_h|_{1, \Omega}+\|\tilde{y}-y_h\|+{ \|\tilde{z}-z_h\|}+\|\mathbf{\tilde{r}-r_h}\|_{H(\text{div}, \Omega)}+\|\mathbf{\tilde{p}-p_h}\|_{H(\text{div},\Omega)}$.
Using \eqref{e1}-\eqref{e5} and \eqref{c4}-\eqref{c8}, we find
\begin{align}
    (\mathbf{\tilde{p}-p_h},\mathbf{v_h})-(\tilde{y}-y_h,\text{div}\,\mathbf{v_h})&=0 \ \ \forall \  \mathbf{v_h}\in V_h, \label{e23}\\
(w_h,\text{div}(\mathbf{\tilde{p}-p_h}))&=0\ \ \forall\  w_h\in W_h, \label{e24} \\
(\mathbf{\tilde{r}-r_h},\mathbf{v_h})-(\tilde{z}-z_h,\text{div}\,\mathbf{v_h})&=0 \ \ \forall \ \mathbf{v_h}\in V_h , \label{e25}\\
(w_h,\text{div}(\mathbf{\tilde{r}-r_h}))&=0 \ \ \forall\  w_h\in W_h , \label{e26}\\
\alpha a(\tilde{u}-u_h, q_h)&=0 \ \  \forall \  q_h\in Q_h \label{e27}.
\end{align}
Using \eqref{e27} and similar arguments as in \cite{verfurthpaper1,46}, we find 
\begin{equation}\label{e90}
    |\tilde{u}-u_h|^2_{1, \Omega} \lesssim {\eta_{11}^2+\eta_{12}^2}.
\end{equation}
 Using the system of equations \eqref{e23}-\eqref{e26} and similar arguments as in \cite{carsten,duranotes}, we obtain
 \begin{equation}\label{e91}
     \|\mathbf{\tilde{r}-r_h}\|^2_{H(\text{div},\Omega)} 
 {\lesssim \eta_{1}^2+\eta_{2}^2+\eta_{7}^2+\eta_{16}^2},
 \end{equation}
 \begin{equation}\label{e92}
     \|\tilde{y}-y_h\|^2\lesssim { \eta_{5}^2+\eta_{9}^2+\eta_{14}^2+\|\mathbf{\tilde{p}-p_h}\|^2},
 \end{equation}
 and
 \begin{equation}\label{lme92}
  { \|\tilde{z}-z_h\|^2\lesssim  \eta_{6}^2+\eta_{10}^2+\eta_{15}^2+\|\mathbf{\tilde{r}-r_h}\|^2}.
 \end{equation}
It remains to estimate $\|\mathbf{\tilde{p}-p_h}\|_{H(\text{div}, \O)}$. By Lemma \ref{helmhotlz}, there exist $\gamma \in H^1_0(\Omega)$ and $\beta \in H^1(\Omega)$ such that 
\begin{equation}\label{e34}
    \mathbf{\tilde{p}-p_h}=\nabla \gamma + \textbf{curl}\,\beta,
\end{equation}
{and}
\begin{equation}\label{e35}
    \|\nabla \gamma\|+\|\nabla \beta\|\lesssim \|\mathbf{\tilde{p}-p_h}\|.
\end{equation}
Using the error decomposition \eqref{e34}, we have
\begin{equation}\label{e37}
       \|\mathbf{\tilde{p}-p_h}\|^2 =\int_\Omega (\mathbf{\tilde{p}-p_h})\cdot \nabla \gamma\,dx +\int_{\Omega}(\mathbf{\tilde{p}-p_h})\cdot \mathbf{curl}\,\beta\,dx.
\end{equation}
Integrating by parts and using \eqref{e2}, we have
\begin{equation*}
    \begin{split}
        \int_\Omega (\mathbf{\tilde{p}-p_h})\cdot \nabla \gamma\,dx &=-\int_{\Omega}\text{div}(\mathbf{\tilde{p}-p_h})\,\gamma\,dx
        =\int_{\Omega}(\text{div}\,\mathbf{p_h}-f)\,\gamma\,dx\\
        &=\int_{\Omega}(\text{div}\,\mathbf{p_h}-f)(\gamma-P_h\gamma)\,dx,
    \end{split}
\end{equation*}
where in obtaining the last equation, we have used $\int_{\Omega}(\text{div}\,\mathbf{p_h}-f)\,P_h\gamma \,dx= 0$ which follows from \eqref{c5}.
{ Using Cauchy-Schwarz inequality and Lemma \ref{rtinter}, we find that
\begin{equation}\label{e38}
     \int_\Omega (\mathbf{\tilde{p}-p_h})\cdot \nabla \gamma\,dx\lesssim \|\text{div}\,\mathbf{p_h}-f\|\|\nabla \gamma\|.
\end{equation}}
\par \noindent
Now, consider the second term of the right hand side of \eqref{e37}. Let $\beta_h \in \mathbb{P}_1^c(\cT_h)$ be an approximation of $\beta$ as defined in Lemma \ref{clement:approx} such that $\mathbf{curl}\,\beta_h \in V_h$. Using \eqref{e23}, integration by parts and $\mathbf{\tilde{p}}\cdot t=-\frac{\partial u_h}{\partial t}$ on $e \in \cE_h^b$, we get
\begin{align} \label{ef2}
    \int_{\Omega}(\mathbf{\tilde{p}-p_h})\cdot \mathbf{curl}\,\beta\,dx &=\int_{\Omega}(\mathbf{\tilde{p}-p_h})\cdot \mathbf{curl} (\beta-\beta_h)\,dx+\int_{\Omega}(\mathbf{\tilde{p}-p_h})\cdot \mathbf{curl}\,\beta_h\,dx \notag\\
    &=\sum_{T \in \cT_h}\int_{T}(\mathbf{\tilde{p}-p_h})\cdot \mathbf{curl}(\beta- \beta_h)\,dx \notag\\
    & =\sum_{T\in \cT_h}\Bigg\{\int_T(\text{rot}(\mathbf{\tilde{p}-p_h})(\beta- \beta_h)\,dx-\int_{\partial T}(\mathbf{\tilde{p}-p_h})\cdot t\,(\beta- \beta_h)\,dx\Bigg\} \notag\\
     &=-\sum_{T\in \cT_h}\int_T(\text{rot}\,\mathbf{p_h})(\beta- \beta_h)\,dx-\sum_{e\in \cE_h}\int_{e}\sjump{(\mathbf{\tilde{p}-p_h})\cdot t}(\beta- \beta_h)\,ds \notag \\
 &=-\sum_{T\in \cT_h}\int_T(\text{rot}\,\mathbf{p_h})(\beta-\beta_h)\,dx+\sum_{e\in \cE_h^i}\int_{e}\sjump{\mathbf{p_h}\cdot t}(\beta- \beta_h)\,ds \notag\\
       & \quad +\sum_{e\in \cE_h^b}\int_{e}\left(\frac{\partial u_h}{\partial t}+\mathbf{p_h}\cdot t \right)\left(\beta-\beta_h\right)\,ds,   
\end{align}
%
where in the second step we have used that
\begin{equation}\label{ef1}
    \int_{\Omega}(\mathbf{\tilde{p}-p_h})\cdot \mathbf{curl}\,\beta_h\,dx=\int_{\Omega}(\tilde{y}-y_h)\,\text{div}(\mathbf{curl}\,\beta_h)\,dx=0.
\end{equation}
%
%
{Using Cauchy-Schwarz inequality and Lemma \ref{clement:approx}, we get 
\begin{align}
    \int_{\Omega}(\mathbf{\tilde{p}-p_h})\cdot \mathbf{curl}\,\beta\,dx &\lesssim \left(\sum_{T\in \cT_h}h_T^2\|\text{rot}\,\mathbf{p_h}\|^2_{L^2(T)} +\sum_{e\in \cE_h^i}|e|\|\sjump{\mathbf{p_h}\cdot t}\|^2_{L^2(e)}\right.\notag\\& \qquad +\left. \sum_{e\in \cE_h^b}|e|\left\|\frac{\partial u_h}{\partial t}+\mathbf{p_h}\cdot t\right\|^2_{L^2(e)}\right)^{\frac{1}{2}}\|\nabla \beta\|_{L^2(\Omega)}\label{e42}.
\end{align}}
Finally, using the equations \eqref{e35}, \eqref{e37}, \eqref{e38} and \eqref{e42}, we get
\begin{equation}\label{e46}
   \|\mathbf{\tilde{p}-p_h}\|^2_{H(\text{div},\Omega)} \lesssim { \eta_{3}^2+\eta_{4}^2+\eta_{8}^2+\eta_{13}^2}.
\end{equation}
The proof is concluded by combining \eqref{e90}, \eqref{e91}, \eqref{e92}, \eqref{lme92} and \eqref{e46}.
\end{proof}
\par \noindent
Next, we proceed to discuss the efficiency estimates of the error estimator $\eta_h$. For $T\in \cT_h$, we denote by $D_T$, union of the elements of triangulation that share an edge with $T$. Also for an edge $e\in \cE_h$, $T_e$ denotes the union of elements having $e$ as an edge. Note that if $e\in \cE_h^b$, then $T_e=T$ such that $e=\partial T\cap \Gamma$.
We now briefly sketch the proof for local efficiency estimates for the error estimators.

\begin{theorem}\textbf{(Efficiency for the error estimator)}\label{efficiencyestimate}
The following estimates hold:
\begin{align}
    \eta_{1,T}&\lesssim \|y-y_h\|_{L^2(T)}+ \|\mathbf{r-r_h}\|_{H(\text{div},T)}, \label{eq:Eff1}\\
    \eta_{2,T}&\lesssim \|\mathbf{r-r_h}\|_{L^2(D_T)},\label{eq:Eff2}\\
    \eta_{3,T}&\lesssim \|\mathbf{p-p_h}\|_{L^2(T)},\label{eq:Eff3}\\
    \eta_{4,T}&\lesssim \|\mathbf{p-p_h}\|_{H(\text{div},T)},\label{eq:Eff4}\\
    \eta_{5,T}&\lesssim  \|y-y_h\|_{L^2(T)}+h_T\|\mathbf{p-p_h}\|_{L^2(T)},\label{eq:Eff5}\\
   \eta_{6,T}&\lesssim  \|z-z_h\|_{L^2(T)}+h_T\|\mathbf{r-r_h}\|_{L^2(T)},\label{eq:Eff14}\\
    \eta_{1,e}&\lesssim \|\mathbf{r-r_h}\|_{L^2(D_T)},\label{eq:Eff6}\\
    \eta_{2,e}&\lesssim \|\mathbf{p-p_h}\|_{L^2(T_e)},\label{eq:Eff7}\\
  \eta_{3,e}&\lesssim \|y-y_h\|_{L^2(T_e)}+|e|\|\mathbf{p-p_h}\|_{L^2(T_e)},\label{eq:Eff8}\\
  \eta_{4,e}&\lesssim \|z-z_h\|_{L^2(T_e)}+|e|\|\mathbf{r-r_h}\|_{L^2(T_e)},\label{eq:Eff15}\\
 \eta_{5,e}&\lesssim |u-u_h|_{H^1(T_e)},\label{eq:Eff9}\\
 \eta_{6,e}&\lesssim |u-u_h|_{H^1(T_e)}+\|\mathbf{r-r_h}\|_{H(\text{div},T_e)},\label{eq:Eff10}\\
 \eta_{7,e}&\lesssim |u-u_h|_{H^1(T_e)}+\|\mathbf{p-p_h}\|_{L^2(T_e)},\label{eq:Eff11}\\
 \eta_{8,e}&\lesssim  \|y-y_h\|_{L^2(T_e)}+|e|\,\|\mathbf{p-p_h}\|_{L^2(T_e)}+\|u-u_h\|_{L^2(T_e)}+|e||u-u_h|_{H^1(T_e)},\label{eq:Eff12}\\
 \eta_{9,e}&\lesssim  \|z-z_h\|_{L^2(T_e)}+|e|\,\|\mathbf{r-r_h}\|_{L^2(T_e)},\label{eq:Eff16}\\
 \eta_{10,e}&\lesssim \|\mathbf{r-r_h}\|_{L^2(D_T)}.\label{eq:Eff13}
\end{align}
\end{theorem}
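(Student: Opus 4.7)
My plan is to follow the Verf\"urth bubble function paradigm, combined with the observation that the continuous fluxes $\mathbf{p}=-\nabla y$ and $\mathbf{r}=-\nabla z$ (from \eqref{b10} and \eqref{b12} together with integration by parts and density arguments) are gradients, and that $\text{div}\,\mathbf{p}=f$, $\text{div}\,\mathbf{r}=y-y_d$ hold in $L^2(\Omega)$. The proof will then split into three groups of estimates: volume residuals, interior edge residuals, and boundary residuals.

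For the volume residuals the arguments are largely immediate. The bounds \eqref{eq:Eff1} and \eqref{eq:Eff4} follow at once from triangle inequality after substituting $\text{div}\,\mathbf{r}=y-y_d$ and $\text{div}\,\mathbf{p}=f$. The bounds \eqref{eq:Eff2} and \eqref{eq:Eff3} are standard $\text{rot}$-efficiency estimates: since $\mathbf{r}$ and $\mathbf{p}$ are gradients their $\text{rot}$ vanishes, so one tests $\text{rot}(\mathbf{r}-\mathbf{r_h})$ against the element bubble $\psi_T\,\text{rot}\,\mathbf{r_h}$, integrates by parts, and applies the inverse inequality together with norm equivalence on polynomial spaces (see Carstensen, Dur\'an). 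For \eqref{eq:Eff5} and \eqref{eq:Eff14} I would test the residual $\mathbf{p_h}+\nabla y_h$ against a suitable bubble-weighted quantity, use $\mathbf{p}+\nabla y=0$ weakly, and exploit that the elementwise average of $\nabla(y-y_h)$ can be controlled by $\|y-y_h\|+h_T\|\mathbf{p}-\mathbf{p_h}\|$ using integration by parts and $y_h$'s constancy on $T$.

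For the interior edge residuals, \eqref{eq:Eff6} and \eqref{eq:Eff7} come from the continuity of $\mathbf{r}\cdot t$ and $\mathbf{p}\cdot t$ (since they are tangential components of gradients of $H^2$-functions), using the edge bubble $\psi_e$ extended to $T_e$, the scaling $\|\psi_e\|_{L^2(e)}\sim|e|^{1/2}$ and $\|\psi_e\|_{L^2(T_e)}\sim|e|$, integration by parts and the rot estimates already derived. The jumps \eqref{eq:Eff8} and \eqref{eq:Eff15} are controlled by lifting $\sjump{y_h}$ (resp.\ $\sjump{z_h}$) via an edge bubble and using the continuity of $y$, $z$, exactly as in the Raviart--Thomas a posteriori literature. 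The Laplacian jump \eqref{eq:Eff9} is the standard conforming residual bound for $-\Delta u=0$ from \eqref{b16}.

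The main obstacle is the boundary group \eqref{eq:Eff10}--\eqref{eq:Eff13}, where the Dirichlet constraint $y=u$ on $\Gamma$ and the Neumann condition $\alpha\partial u/\partial n=-\mathbf{r}\cdot n$ enter simultaneously. For \eqref{eq:Eff10} I would test $\alpha\partial u_h/\partial n+\mathbf{r_h}\cdot n$ with a boundary edge bubble, use the exact Neumann identity $\alpha\partial u/\partial n+\mathbf{r}\cdot n=0$, integrate by parts, and split the resulting expression into the Laplacian residual of $u-u_h$ (controlled by $|u-u_h|_{H^1(T_e)}$) and a div-free remainder handled by $\|\mathbf{r}-\mathbf{r_h}\|_{H(\text{div},T_e)}$. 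Estimate \eqref{eq:Eff11} is analogous using the tangential Dirichlet trace $\partial u/\partial t+\mathbf{p}\cdot t=0$ on $\Gamma$. For \eqref{eq:Eff12}, which couples state and control on the boundary, I would write $u_h-y_h=(u_h-u)+(y-y_h)+(u-y)$, observe $u=y$ on $\Gamma$, and use an edge-bubble lifting together with trace and the already-proven volume efficiency to absorb the $|e|$-weighted flux term. Finally \eqref{eq:Eff13} and \eqref{eq:Eff16} are handled as in the interior case since $z=0$ is not imposed but $z_h$ restricted to a boundary edge still satisfies a standard lifting bound that converts to $\|z-z_h\|$ plus an $|e|$-weighted flux contribution.
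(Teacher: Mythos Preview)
Your plan is correct and essentially matches the paper's own proof: both rely on the Verf\"urth bubble paradigm, the identities $\text{div}\,\mathbf{p}=f$, $\text{div}\,\mathbf{r}=y-y_d$, and the fact that $\mathbf{p}=-\nabla y$, $\mathbf{r}=-\nabla z$ are curl-free. A couple of tactical differences are worth noting. First, the paper does not treat $\eta_{2,T}$, $\eta_{1,e}$, $\eta_{10,e}$ separately but bundles them into a single estimate by constructing one bubble $\omega\in H^1_0(D_T)$ (following Dur\'an's Lemma~4.2) and testing $(\tilde{\mathbf{r}}-\mathbf{r_h})$ against $\mathbf{curl}\,\omega$; this is why the right-hand side is $\|\mathbf{r}-\mathbf{r_h}\|_{L^2(D_T)}$ rather than $L^2(T_e)$. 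Second, for $\eta_{8,e}$ the paper does not decompose $u_h-y_h$ and apply trace inequalities; instead it tests with $\mathbf{v_e}=\mathbf{b_e}(u_h-y_h)$ where $\mathbf{b_e}$ is the lowest-order Raviart--Thomas edge basis function (normal flux one on $e$, zero elsewhere), then invokes the weak form \eqref{e1} and the integration-by-parts identity for $H(\text{div})$. Your trace-based route also works once you write $\nabla(y-y_h)=-\mathbf{p}=-(\mathbf{p_h}+\nabla y_h)-(\mathbf{p}-\mathbf{p_h})$ and reuse \eqref{eq:Eff5}, which is what the paper itself does for $\eta_{3,e}$ and $\eta_{4,e}$. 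Finally, a small wording issue: for $\eta_{9,e}$ and $\eta_{10,e}$ the key point is precisely that $z=0$ on $\Gamma$ \emph{is} a consequence of the adjoint equation \eqref{b12}, so that $z_h=z_h-z$ and $\mathbf{r_h}\cdot t=(\mathbf{r_h}-\mathbf{r})\cdot t$ on boundary edges; your phrase ``$z=0$ is not imposed'' should be read as ``$z_h=0$ is not imposed discretely.''
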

\begin{proof}
$\bullet$ (Lower bound of $ \eta_{1,T}$:)
In view of \eqref{b13}, \eqref{e4} and Cauchy-Schwarz inequality, we have
\begin{equation*}
    \begin{split}
        \eta_{1,T}&=\|\text{div}(\mathbf{\tilde{r}-r_h})\|_{L^2(T)}
         \leq \|\text{div}(\mathbf{\tilde{r}-r})\|_{L^2(T)}+\|\text{div}(\mathbf{r-r_h})\|_{L^2(T)}\\
        & \lesssim \|y-y_h\|_{L^2(T)}+\|\mathbf{r-r_h}\|_{H(\text{div},T)}.
    \end{split}
\end{equation*}
$\bullet$ (Lower bounds of $ \eta_{2,T}, \eta_{1,e}$ and $\eta_{10,e}$:)
Set $\eta^2_{T,e}=\eta^2_{2,T}+\eta^2_{1,e}+\eta^2_{10,e}$. Let $\omega \in H^1_0(D_T)$ be an interior bubble function defined on $D_T$. Since $\omega$ vanishes at the boundary of $D_T$, it can be extended by zero outside $D_T$, say $\tilde{\omega} \in H^1_0(\Omega)$ be its extension by zero to $\Omega$. Then using the similar arguments as in \cite[Theorem 4.2, p. 29]{duranotes} by taking into account \eqref{b12} and \eqref{e3}, we find
\begin{equation*}
\begin{split}
    \eta^2_{T,e}&=\int_{D_T}(\mathbf{\tilde{r}-r_h})\cdot \mathbf{curl}\,\omega\,dx
    =\int_{D_T}(\mathbf{\tilde{r}-r})\cdot \mathbf{curl}\,\omega\,dx+\int_{D_T}(\mathbf{r-r_h})\cdot \mathbf{curl}\,\omega\,dx\\
    &=\int_{\Omega}(\mathbf{\tilde{r}-r})\cdot \mathbf{curl}\,\tilde{\omega}\,dx+\int_{D_T}(\mathbf{r-r_h})\cdot \mathbf{curl}\,\omega\,dx\\
    &=\int_{D_T}(\mathbf{r-r_h})\cdot \mathbf{curl}\,\omega\,dx.
\end{split}
\end{equation*}
Using $\|\nabla \omega\|_{L^2(D_T)}\lesssim \eta_{T,e}$, \cite[Lemma 4.2, p. 28]{duranotes} and Cauchy-Schwarz inequality, we get
\begin{equation*}
    \eta_{T,e}\lesssim \|\mathbf{r-r_h}\|_{L^2(D_T)}.
\end{equation*}
$\bullet$ (Lower bound of $ \eta_{3,T}$:)
Let $T\in \cT_h$ be arbitrary and $b_T \in \mathbb{P}_3(T)$ be an interior bubble function which takes the unit value at the barycenter of $T$ and vanishes on $\partial T$. Define $\theta=b_T\,(\text{rot}\,\mathbf{p_h})$ on $T$ and $\tilde{\theta} \in H^1_0(\Omega)$ be the extension of $\theta$ by zero to $\Omega$.
Using norm equivalence on finite dimensional vector spaces, we have
\begin{equation*}
\begin{split}
    \|\text{rot}\,\mathbf{p_h}\|^2_{L^2(T)}&\lesssim \int_{T}b_T\,|\text{rot}\,\mathbf{p_h}|^2\,dx =\int_{T}\theta\,\text{rot}\,\mathbf{p_h}\,dx=\sum_{T\in \cT_h}\int_{T}\tilde{\theta}\,\text{rot}\,\mathbf{p_h}\,dx.
\end{split}
\end{equation*}
Using equations \eqref{ef1}, \eqref{ef2}, \eqref{b10} and \eqref{e1} we get
\begin{align*}
     \sum_{T\in \cT_h}\int_{T}\tilde{\theta}\,\text{rot}\,\mathbf{p_h}\,dx 
       &=- \int_{\Omega}(\mathbf{\tilde{p}-p_h})\cdot \mathbf{curl}\,\tilde{\theta}\,dx\\
       &=- \int_{\Omega}(\mathbf{\tilde{p}-p})\cdot \mathbf{curl}\,\tilde{\theta}\,dx- \int_{\Omega}(\mathbf{p-p_h})\cdot \mathbf{curl}\,\tilde{\theta}\,dx\\
       &=-\int_{\Omega}(y-\tilde{y})\,\text{div}(\mathbf{curl}\,\tilde{\theta})\,dx+\int_{\Gamma}(u-u_h)\,(\mathbf{curl}\,\tilde{\theta}\cdot n)\,ds-\int_{T}(\mathbf{p-p_h})\cdot \mathbf{curl}\,\theta\,dx\\
       &=\int_{T}(\mathbf{p_h-p})\cdot \mathbf{curl}\,\theta\,dx.
\end{align*}
Therefore, using Cauchy-Schwarz inequality and $\|\nabla \tilde{\theta}\|_{L^2(T)} \leq h^{-1}_{T}\|\text{rot}\,\mathbf{p_h}\|_{L^2(T)}$ \cite{verfurthpaper1,verfurthpaper2}, we get 
\begin{equation}\label{etav3}
    \eta_{3,T}\lesssim \|\mathbf{p-p_h}\|_{L^2(T)}.
\end{equation}
$\bullet$ (Lower bounds of $ \eta_{4,T}$, $ \eta_{5,T}$ and $ \eta_{6,T}$:)
The estimate \eqref{eq:Eff4} is immediate with the observation that
\begin{equation*}
    \|f-\text{div}\,\mathbf{p_h}\|_{L^2(T)}=\|\text{div}(\mathbf{p-p_h})\|_{L^2(T)}\leq \|\mathbf{p-p_h}\|_{H(\text{div},T)},
\end{equation*}
and a use of the arguments as in \cite[Lemma 4.5, p. 33 ]{duranotes} yields
\begin{equation}\label{ev5}
    \eta_{5,T} \lesssim \|y-y_h\|_{L^2(T)}+h_T\|\mathbf{p-p_h}\|_{L^2(T)},
\end{equation}
\begin{equation}\label{nev6}
    \eta_{6,T} \lesssim \|z-z_h\|_{L^2(T)}+h_T\|\mathbf{r-r_h}\|_{L^2(T)}.
\end{equation}
$\bullet$ (Lower bound of $\eta_{2,e}$:)
We begin by observing that for any $\beta \in Q$, using \eqref{ef1} and \eqref{ef2}, we have 
\begin{equation}\label{div11}
    \begin{split}
     \sum_{e\in \cE_h^i}\int_{e}\sjump{\mathbf{p_h}\cdot t}\,\beta\,ds&=\sum_{T\in \cT_h}\int_T(\text{rot}\,\mathbf{p_h})\,\beta\,dx+ \int_{\Omega}(\mathbf{\tilde{p}-p_h})\cdot \mathbf{curl}\,\beta\,dx\\
       & \quad -\sum_{e\in \cE_h^b}\int_{e}\left(\frac{\partial u_h}{\partial t}+\mathbf{p_h}\cdot t \right)\,\beta\,ds.
    \end{split}
\end{equation}
Let $e\in \cE_h^i$ be an interior edge. Let $b_e\in P_4(T_e)$ be an edge bubble function which takes the unit value at the midpoint of $e$ and vanishes on $\partial T_e\setminus e$ . Define $\theta=b_e\,\sjump{\mathbf{p_h}\cdot t}$ on $T_e$, where $t$ is the unit tangent vector on $e$. Let $\tilde{\theta}\in H^1_0(\Omega)$ be an extension of this function by zero to $\Omega$. Further, $\theta$ satisfies the following estimates \cite{verfurthpaper1,verfurthpaper2} 
\begin{align}
        \|\nabla \theta\|_{L^2(T_e)}\lesssim |e|^{-\frac{1}{2}}\|\sjump{\mathbf{p_h}\cdot t}\|_{L^2(e)},\label{bubble1}\\
        \|\theta\|_{L^2(T_e)}\lesssim |e|^{\frac{1}{2}}\|\sjump{\mathbf{p_h}\cdot t}\|_{L^2(e)}.\label{bubble2}
\end{align}
 Using the norm equivalence on finite dimensional vector spaces and \eqref{div11}, we have
\begin{align}
     \|\sjump{\mathbf{p_h}\cdot t}\|^2_{L^2(e)}&\lesssim \int_{e}\theta\,\sjump{\mathbf{p_h}\cdot t}\,ds=\sum_{e\in \cE_h^i}\int_{e}\sjump{\mathbf{p_h}\cdot t}\,\tilde{\theta}\,ds\notag\\
     &=\sum_{T\in \cT_h}\int_T(\text{rot}\,\mathbf{p_h})\,\tilde{\theta}\,dx+ \int_{\Omega}(\mathbf{\tilde{p}-p_h})\cdot \mathbf{curl}\,\tilde{\theta}\,dx-\sum_{e\in \cE_h^b}\int_{e}\left(\frac{\partial u_h}{\partial t}+\mathbf{p_h}\cdot t \right)\,\tilde{\theta}\,ds\notag\\
     &=\int_{\Omega}(\mathbf{\tilde{p}-p})\cdot \mathbf{curl}\,\tilde{\theta}\,dx+\int_{\Omega}(\mathbf{p-p_h})\cdot \mathbf{curl}\,\tilde{\theta}\,dx+\int_{T_e}(\text{rot}\,\mathbf{p_h})\,\theta\,dx\notag\\
     &=\int_{\Omega}(y-\tilde{y})\,\text{div}(\mathbf{curl}\,\tilde{\theta})\,dx-\int_{\Gamma}(u-u_h)\,(\mathbf{curl}\,\tilde{\theta}\cdot n)\,ds\notag\\
     & \quad +\int_{T_e}(\mathbf{p-p_h})\cdot \mathbf{curl}\,\theta\,dx+\int_{T_e}(\text{rot}\,\mathbf{p_h})\,\theta\,dx\notag\\
     &=\int_{T_e}(\mathbf{p-p_h})\cdot \mathbf{curl}\,\theta\,dx+\int_{T_e}(\text{rot}\,\mathbf{p_h})\,\theta\,dx.\label{ef3}
\end{align}

Finally, using Cauchy-Schwarz inequality, \eqref{etav3} and estimates \eqref{bubble1}, \eqref{bubble2} in \eqref{ef3}, we get the estimate \eqref{eq:Eff7}.

$\bullet$ (Lower bound of $\eta_{7,e}$:)
Let $e\in \cE_h^b$ and $T \in \cT_h$ be the triangle having an edge $e$. Let $b_e \in P_2(T)$ be an edge bubble function such that it  vanishes on $\partial T \setminus e$ and takes the unit value at the midpoint of $e$. We define $\theta = b_e\,\left(\frac{\partial u_h}{\partial t}+\mathbf{p_h}\cdot t\right)$ on $T$ where $t$ is the unit tangent vector on the edge $e$. We extend this function by zero to $\Omega$ and call it to be $\tilde{\theta}$. The function $\theta$ satisfies the following estimates \cite{verfurthpaper1,verfurthpaper2} 
\begin{align}
\|\nabla \theta\|_{L^2(T_e)}&\lesssim |e|^{-\frac{1}{2}}\Big\|\frac{\partial u_h}{\partial t}+\mathbf{p_h}\cdot t\Big\|_{L^2(e)},\label{vt1}\\
\|\theta\|_{L^2(T_e)}&\lesssim |e|^{\frac{1}{2}}\Big\|\frac{\partial u_h}{\partial t}+\mathbf{p_h}\cdot t\Big\|_{L^2(e)}.\label{vt2}
\end{align}
Now using the norm equivalence on finite dimensional spaces and equations \eqref{ef1}, \eqref{ef2}, we have
\begin{equation*}
    \begin{split}
       \Big \|\frac{\partial u_h}{\partial t}+\mathbf{p_h}\cdot t\Big\|^2_{L^2(e)}&\lesssim \int_{e}\theta\,\left(\frac{\partial u_h}{\partial t}+\mathbf{p_h}\cdot t\right)\,ds=\sum_{e\in\cE_h^b}\int_{e}\tilde{\theta}\,\left(\frac{\partial u_h}{\partial t}+\mathbf{p_h}\cdot t\right)\,ds\\
     &=\int_{\Omega}(\mathbf{\tilde{p}-p_h})\cdot \mathbf{curl}\,\tilde{\theta}\,dx +\sum_{T\in\cT_h}\int_{T}(\text{rot}\,\mathbf{p_h})\,\tilde{\theta}\,dx- \sum_{e\in \cE_h^i}\int_{e}\sjump{\mathbf{p_h}\cdot t}\,\tilde{\theta}\,ds\\
     &=\int_{\Omega}(\mathbf{\tilde{p}-p})\cdot \mathbf{curl}\,\tilde{\theta}\,dx+\int_{T}(\mathbf{p-p_h})\cdot \mathbf{curl}\,\theta\,dx+\int_{T}(\text{rot}\,\mathbf{p_h})\,\theta\,dx\\
     &=\int_{\Omega}(y-\tilde{y})\,\text{div}(\mathbf{curl}\,\tilde{\theta})\,dx-\int_{\Gamma}(u-u_h)\,(\mathbf{curl}\,\tilde{\theta}\cdot n)\,ds\\
     & \quad +\int_{T}(\mathbf{p-p_h})\cdot \mathbf{curl}\,\theta\,dx+\int_{T}(\text{rot}\,\mathbf{p_h})\,\theta\,dx\\
     &=\int_{T}(\mathbf{p-p_h})\cdot \mathbf{curl}\,\theta\,dx+\int_{T}(\text{rot}\,\mathbf{p_h})\,\theta\,dx-\int_{e}(u-u_h)\,(\mathbf{curl}\,\theta \cdot n)\,ds.
    \end{split}
\end{equation*}
Using Cauchy-Schwarz inequality, Lemma \ref{l1}, \eqref{etav3}, \eqref{vt1} and \eqref{vt2}, we get
\begin{equation}
    \eta_{7,e}\lesssim |u-u_h|_{H^1(T_e)}+\|\mathbf{p-p_h}\|_{L^2(T_e)}.
\end{equation}
$\bullet$ (Lower bound of $\eta_{3,e}$:)
Let $e \in \cE_h^i$. A use of trace inequality \cite[Section 1.6]{brennerbook} yields
\begin{align*}
   |e|^{\frac{1}{2}} \|\sjump{y_h}\|_{L^2(e)}&= |e|^{\frac{1}{2}} \|\sjump{y_h-y}\|_{L^2(e)}\\
   &\lesssim \|y_h-y\|_{L^2(T_e)} +|e|\|\nabla(y_h-y)\|_{L^2(T_e)} \\
    &=\|y_h-y\|_{L^2(T_e)} +|e|\|\nabla y_h+\mathbf{p}\|_{L^2(T_e)}
    \\
    &\lesssim \|y-y_h\|_{L^2(T_e)} +|e|\|\nabla y_h+\mathbf{p_h}\|_{L^2(T_e)}+|e|\|\mathbf{p-p_h}\|_{L^2(T_e)}.
\end{align*}
We finally get the desired estimate by using \eqref{eq:Eff5}.

{
$\bullet$ (Lower bound of $\eta_{4,e}$:)
Let $e \in \cE_h^i$. A use of trace inequality \cite[Section 1.6]{brennerbook} yields
\begin{align*}
   |e|^{\frac{1}{2}} \|\sjump{z_h}\|_{L^2(e)}&= |e|^{\frac{1}{2}} \|\sjump{z_h-z}\|_{L^2(e)}\\
   &\lesssim \|z_h-z\|_{L^2(T_e)} +|e|\|\nabla(z_h-z)\|_{L^2(T_e)} \\
    &=\|z_h-z\|_{L^2(T_e)} +|e|\|\nabla z_h+\mathbf{r}\|_{L^2(T_e)}
    \\
    &\lesssim \|z-z_h\|_{L^2(T_e)} +|e|\|\nabla z_h+\mathbf{r_h}\|_{L^2(T_e)}+|e|\|\mathbf{r-r_h}\|_{L^2(T_e)}.
\end{align*}
We finally get the desired estimate by using \eqref{eq:Eff14}.}

$\bullet$ (Lower bound of $\eta_{8,e}$:)
For any $\mathbf{v}\in V$, using the equations \eqref{e1}, \eqref{e2} and Green's identity, we find that 
\begin{equation}\label{estim7}
    \begin{split}
        \sum_{e\in \cE^b_h}\int_{e}(u_h-y_h) (\mathbf{v}\cdot n)\,ds &=\sum_{T\in \cT_h}\int_{T}(\mathbf{p_h-\tilde{p}})\cdot \mathbf{v}\,dx+\sum_{T\in \cT_h}\int_{T}(\tilde{y}-y_h)\text{div}\,\mathbf{v}\,dx\\
        & \quad -\sum_{T\in \cT_h}\int_{T}(\nabla y_h+\mathbf{p_h})\,\mathbf{v}\,dx+\sum_{e\in \cE^i_h}\int_{e}\sjump{y_h}(\mathbf{v}\cdot n)\,ds.
    \end{split}
\end{equation}
Let $e \in \cE^b_h$ be arbitrary and $T$ be the triangle such that $e=\partial T \cap \Gamma.$ Let $\mathbf{b_e} \in V$ be a lowest order Raviart-Thomas basis function corresponding to an edge $e$ such that $\mathbf{b_e}\cdot n_e=1$ on $e$ and $\mathbf{b_e}\cdot n=0$ for all other edges except $e$ (see \cite{threematlab}). Define $\mathbf{v_e}=\mathbf{b_e}\,(u_h-y_h)$ on $\Omega$. Note that $\mathbf{v_e}$ and $\text{div}\,\mathbf{v_e}$ { vanish} outside $T$.  Using the standard scaling arguments, we have the following estimates  
\begin{align}
\|\text{div}\,\mathbf{v_e}\|_{L^2(T_e)}&\lesssim |e|^{-\frac{1}{2}}\|u_h-y_h\|_{L^2(e)},\label{bubble31}\\
        \|\mathbf{v_e}\|_{L^2(T_e)}&\lesssim |e|^{\frac{1}{2}}\|u_h-y_h\|_{L^2(e)}.\label{bubble32}
\end{align}
 Then using the norm equivalence on finite dimensional spaces and \eqref{estim7}, we have that 
\begin{equation*}
    \begin{split}
    \|u_h-y_h\|^2_{L^2(e)}&= \int_{e}\mathbf{v_e}\cdot n_e\,(u_h-y_h)\,ds \\
    &=\sum_{T\in \cT_h}\int_{T}(\mathbf{p_h-\tilde{p}})\cdot \mathbf{v_e}\,dx+\sum_{T\in \cT_h}\int_{T}(\tilde{y}-y_h)\text{div}\,\mathbf{v_e}\,dx\\
        & \quad -\sum_{T\in \cT_h}\int_{T}(\nabla y_h+\mathbf{p_h})\cdot \mathbf{v_e}\,dx+\sum_{e\in \cE^i_h}\int_{e}\sjump{y_h}(\mathbf{v_e}\cdot n)\,ds\\
        &=\int_{T}(\mathbf{p_h-p})\cdot \mathbf{v_e}\,dx+\int_{\Omega}(\mathbf{p-\tilde{p}})\cdot \mathbf{v_e}\,dx\\
        & \quad +\int_{T}(y-y_h)\,\text{div}\,\mathbf{v_e}\,dx+\int_{\Omega}(\tilde{y}-y)\,\text{div}\,\mathbf{v_e}\,dx\\
        & \quad -\int_{T}(\nabla y_h+\mathbf{p_h})\cdot \mathbf{v_e}\,dx\\
        & =\int_{T}(\mathbf{p_h-p})\cdot \mathbf{v_e}\,dx+\int_{T}(y-y_h)\,\text{div}\,\mathbf{v_e}\,dx -\int_{T}(\nabla y_h+\mathbf{p_h})\cdot \mathbf{v_e}\,dx\\
        & \quad -\int_{T}(u-u_h)\,\text{div}\,\mathbf{v_e}\,dx-\int_{T}\nabla(u-u_h)\cdot\mathbf{v_e}\,dx,
    \end{split}
\end{equation*}
where in the last equality, we have used equation \eqref{e6} and Lemma \ref{l1}. By the use of Cauchy-Schwarz inequality, \eqref{bubble31}, \eqref{bubble32} and \eqref{ev5}, we have the desired estimate \eqref{eq:Eff12}.

{
$\bullet$ (Lower bound of $\eta_{9,e}$:)
The desired estimate \eqref{eq:Eff16} can be obtained by applying the same set of arguments used for obtaining \eqref{eq:Eff12}.}

$\bullet$ (Lower bound of $\eta_{5,e}$:)
Let $q\in Q$ be arbitrary. Then using \eqref{b14}, we find that
\begin{align}
     \alpha a(\tilde{u}, q)-\alpha a(u_h, q) &=-\int_{\Gamma}q\,(\mathbf{r_h}\cdot n)\,ds-\alpha \int_{\Omega}\nabla u_h\cdot \nabla q\,dx\notag\\
     &=-\sum_{T\in \cT_h}\Bigg\{\int_{\Gamma \cap \partial T } q\,(\mathbf{r_h}\cdot n)\,ds+\alpha \int_{T}\nabla u_h\cdot \nabla q\,dx  \Bigg\}.\label{blx}
\end{align}
Apply integration by parts on the second term on the right hand side of \eqref{blx} to get
\begin{align}
        \alpha a(\tilde{u}-u_h, q)&=\sum_{T\in \cT_h}\Bigg\{-\int_{\Gamma \cap \partial T } q\,(\mathbf{r_h}\cdot n)\, ds+\alpha \int_{T}\Delta u_h \,q\,dx-\alpha \int_{\partial T}\frac{\partial u_h}{\partial n}\,q\,ds\Bigg\}\notag\\
        &=-\alpha\sum_{e\in\cE_h^i}\int_{e}\jump{\frac{\partial u_h}{\partial n}}q\,ds-\sum_{e\in \cE_h^b}\int_{e}\left(\alpha\frac{\partial u_h}{\partial n}+\mathbf{r_h}\cdot n\right)\,q\,ds.\label{er}
\end{align}
Let $e\in \cE_h^i$ and $b_e \in P_4(T_e)$ be an edge bubble function which vanishes on $\partial T_e \setminus e$ and takes the value one at the midpoint of $e$. Then by equivalence of two norms on finite dimensional spaces, we get 
\begin{equation*}
   \alpha\,\bigg\|\jump{\frac{\partial u_h}{\partial n}}\bigg
   \|^2_{L^2(e)}\lesssim  \alpha\int_{e}b_e \jump{\frac{\partial u_h}{\partial n}}^2\,ds.
\end{equation*}
Define $\theta=b_e\,\jump{\frac{\partial u_h}{\partial n}}$ on $T_e$ where $n$ is the outward unit normal vector on $e$ and $\tilde{\theta}\in H^1_0(\Omega)$ be an extension of $\theta$ by zero to $\Omega$. Then using the equation \eqref{er} and \eqref{e10}, we have
\begin{equation*}
    \begin{split}
        \alpha \bigg\|\jump{\frac{\partial u_h}{\partial n}}\bigg
   \|^2_{L^2(e)}&\leq  \alpha\int_{e}\theta\,\jump{\frac{\partial u_h}{\partial n}}\,ds
   =\alpha\sum_{e\in\cE_h^i}\int_{e}\tilde{\theta}\,\jump{\frac{\partial u_h}{\partial n}}\,ds\\
   &=-\alpha\sum_{T\in\cT_h}\int_{T}\nabla(\tilde{u}-u_h)\cdot\nabla \tilde{\theta}\,dx\\
   &=-\alpha\int_{\Omega}\nabla(\tilde{u}-u)\cdot\nabla \tilde{\theta}\,dx-\alpha \int_{T_e}\nabla(u-u_h)\cdot\nabla\,\theta\,dx\\
   &=\langle (\mathbf{r+r_h})\cdot n,\tilde{\theta}\rangle-\alpha \int_{T_e}\nabla(u-u_h)\cdot\nabla \theta\,dx\\
   &=-\alpha \int_{T_e}\nabla(u-u_h)\cdot\nabla \theta\,dx.
    \end{split}
\end{equation*}
Using the estimate $\|\nabla \theta\|_{L^2(T_e)}\leq |e|^{-\frac{1}{2}}\big\|\jump{\frac{\partial u_h}{\partial n}}\big\|_{L^2(e)}$ \cite{verfurthpaper1,verfurthpaper2} and Cauchy-Schwarz inequality, we obtain the estimate \eqref{eq:Eff9}.

$\bullet$ (Lower bound of $\eta_{6,e}$:)
Let $e\in \cE_h^b$  and $b_e \in P_2(T)$ be an edge bubble function which vanishes on $\partial T \setminus e$, where $T \in \cT_h$ is the triangle such that $e \subset \partial T$. Define $\theta = b_e\,\left(\frac{\partial u_h}{\partial n}+\mathbf{r_h}\cdot n\right)$ on $T$ where $n$ is the outward unit normal vector on $e$. Let $\tilde{\theta}$ be an extension of $\theta$ by zero to $\Omega$. Then using the norm equivalence on finite dimensional spaces, equations \eqref{e10} and \eqref{er}, we have 
\begin{equation*}
    \begin{split}
       \Big \|\alpha \frac{\partial u_h}{\partial n}+\mathbf{r_h}\cdot n\Big\|^2_{L^2(e)}
       &\lesssim \sum_{e\in\cE_h^b}\int_{e}\tilde{\theta}\,\left(\alpha \frac{\partial u_h}{\partial n}+\mathbf{r_h}\cdot n\right)\,ds\\
       &=-\alpha\int_{\Omega}\nabla(\tilde{u}-u_h)\cdot \nabla \tilde{\theta}\,dx\\
       &=-\alpha\int_{\Omega}\nabla(\tilde{u}-u)\cdot \nabla \tilde{\theta}\,dx-\alpha\int_{T}\nabla(u-u_h)\cdot \nabla \theta\,dx\\
       &=\int_{e}\theta\,(\mathbf{r_h-r})\cdot n\,ds-\alpha\int_{T}\nabla(u-u_h)\cdot \nabla \theta\,dx.
    \end{split}
\end{equation*}
Now, using the estimate $\|\nabla\theta\|_{L^2(T)}\lesssim |e|^{-\frac{1}{2}} \Big \|\alpha \frac{\partial u_h}{\partial n}+\mathbf{r_h}\cdot n\Big\|_{L^2(e)}$ \cite{verfurthpaper1,verfurthpaper2}, Cauchy-Schwarz inequality and Lemma \ref{l1} in the last equation, we get the desired estimate.
\end{proof}
\section{Numerical Realization}\label{secnumer}
In this section, we present the numerical results of two test examples to validate the theoretical findings. 
The results of \emph{a priori} error estimates derived in Section \ref{secapriori}  are validated by the first experiment. On the other hand the second experiment validates the reliability and the efficiency of the error estimator discussed in Section \ref{secapost}. In the first example, mesh is refined uniformly while in the second example instead of uniform mesh refinement, the following adaptive strategy is used for mesh refinement :
\begin{center}
   \textbf{SOLVE} $\rightarrow$ \textbf{ESTIMATE} $\rightarrow$ \textbf{MARK} $\rightarrow$ \textbf{REFINE}. 
\end{center}
First, we solve the discrete system \eqref{c4}-\eqref{c8} and then compute the error estimator $\eta_h$ in the step \textbf{ESTIMATE}.
We use the D\"orfler's marking strategy \cite{26} with parameter $\theta = 0.4$ for marking the elements for refinement. Using the newest vertex bisection algorithm, the marked elements are refined to obtain a new adaptive mesh.
 All the computations have been performed using the software package MATLAB and the discrete system is solved using a preconditioned projection algorithm. In this direction, we modify the cost functional with known solution. The cost functional $J$ is modified to $J_m$, which is defined by
\begin{eqnarray*}
\ J_m(w, p) = \frac{1}{2}\|w-y_d\|^2 +\frac{\alpha}{2}|p-u_d|^2_{1,\Omega},  \
\end{eqnarray*}
where $(w,p) \in W \times Q$ and $u_d \in Q$ is a given function. Then the minimization problem reads: Find $(y,u) \in W \times Q$ such that 
\begin{equation*}
J_m(y, u) = \min_{(w,p)\in W \times Q}J_m(w,p),
\end{equation*}
subject to the condition that $(w,\textbf{k})=S(f,p)$. It can be easily checked that the continuous optimality system takes the form:
\begin{align*}
(\mathbf{p},\mathbf{v})-(y,\text{div}\,\mathbf{v})&=-\langle \mathbf{v}\cdot n,u\rangle \ \ \forall \  \mathbf{v}\in V, \\
(w,\text{div}\,\mathbf{p})&=(f,w)\ \ \forall\  w\in W,  \\
(\mathbf{r},\mathbf{v})-(z,\text{div}\,\mathbf{v})&=0 \ \ \forall \ \mathbf{v}\in V , \\
(w,\text{div}\,\mathbf{r})&=(y-y_d,w) \ \ \forall\  w\in W , \\
\alpha a(u,q)&=-\langle \mathbf{r}\cdot n,q\rangle + \alpha a(u_d, q)\ \  \forall \  q\in Q .
\end{align*}
Accordingly, the discrete optimality system is modified as well.
\begin{align*}
(\mathbf{p_h},\mathbf{v_h})-(y_h,\text{div}\,\mathbf{v_h})&=-\langle \mathbf{v_h}\cdot n,u_h\rangle \ \ \forall \  \mathbf{v_h}\in V_h,\\
(w_h,\text{div}\,\mathbf{p_h})&=(f,w_h)\ \ \forall\  w_h\in W_h, \\
(\mathbf{r_h},\mathbf{v_h})-(z_h,\text{div}\,\mathbf{v_h})&=0 \ \ \forall \ \mathbf{v_h}\in V_h , \\
(w_h,\text{div}\,\mathbf{r_h})&=(y_h-y_d,w_h) \ \ \forall\  w_h\in W_h , \\
\alpha a(u_h, q_h)&=-\langle \mathbf{r_h}\cdot n,q_h\rangle+ \alpha a(u_d,q_h)\ \  \forall \  q_h\in Q_h.
\end{align*}
In this case, \emph{a posteriori} error estimator $\eta_{5,e}$  is modified as follows:
\begin{equation*}
    { \eta_{5,e}=|e|^{\frac{1}{2}}\left\|\alpha \dfrac{\partial (u_h-u_d)}{\partial n}+\mathbf{r_h}\cdot n\right\|_{L^2(e)}}.
\end{equation*}
\begin{example}\label{aprioriexmp}
In this experiment, we consider $\Omega = (0,1)\times (0,1)$ and the parameter $\alpha =1$ together with the following data: the state $y(x_1, x_2)=e^{(x_1+x_2)}$, $\mathbf{p}(x_1,x_2)=-\nabla y$, the adjoint state  $z (x_1,x_2)=x_1^2(1-x_1^2)^2x_2^2(1-x_2^2)^2$, $\mathbf{r}(x_1,x_2)=-\nabla z$ and the control $u(x_1,x_2)=e^{(x_1+x_2)}$. Then obtain $f= -\Delta y$ and $y_d=y+ \Delta z$. Note that the choice of $z $ leads to $u_d=u$. \\
\noindent
Table \ref{tab table1} shows the errors and order of convergence of the numerical approximations of the state, adjoint state and control in $L^2$-norm. The errors and the orders of convergence of the numerical approximations of the gradient of state, gradient of adjoint state and control in $H(\text{div},\Omega)$ norm and $H^1$-norm are computed and shown in Table \ref{tab table2}. It is evident from these tables that the error converges with optimal rate in the respective norms.The plots of the exact and discrete controls are shown in the Figure \ref{fig5}.

 \end{example}
        \begin{table}[!ht]
\begin{center}        
        \caption{\label{tab table1}Errors and orders of convergence in $L^2$-norm.}
 \begin{tabular}{|c| c| c| c| c| c| c|} 
 \hline
 $h$ & $\|y-y_h\|$ & Order & $\|z - z_h\|$ &Order & $\|u-u_h\|$ & Order\\ [0.5ex] 
 \hline
 $2^{-2}$ & 3.712 x $10^{-1}$ & - &  3.183 x $10^{-3}$    & - & 5.064 x $10^{-2}$ & -    \\ 
 \hline
 $2^{-3}$ & 1.877 x $10^{-1}$ & 1.187  &1.520 x $10^{-3}$ &1.286& 1.593 x $10^{-2}$&  2.014  \\
 \hline
 $2^{-4}$ & 9.405 x $10^{-2}$ &1.094 & 7.446 x $10^{-4}$ &1.130&  4.298 x $10^{-3}$ &2.074  \\
 \hline
 $2^{-5}$ &4.705 x $10^{-2}$ &1.046 &3.700 x $10^{-4}$ & 1.056&   1.102 x $10^{-3}$ &2.056  \\
 \hline
 $2^{-6}$ & 2.353 x $10^{-2}$ & 1.023 & 1.847 x $10^{-4}$&1.025 &  2.778 x $10^{-4}$ & 2.034 \\ 
 \hline
$2^{-7}$ & 1.176 x $10^{-2}$& 1.011&9.230 x $10^{-5}$& 1.012 & 6.961 x $10^{-5}$& 2.019 
 \\ [1ex] 
 \hline
\end{tabular}
\end{center}
\end{table}
 \begin{table}[!ht]
 \begin{center} 
 \caption{\label{tab table2}Errors and orders of convergence in $H(\text{div},\Omega)$ and $H^1$-norm.}
  \begin{tabular}{|c| c| c| c| c| c| c|} 
 \hline
 $h$ & $\|\mathbf{p - p_h}\|_{H(\text{div},\Omega)}$ & Order & $\|\mathbf{r - r_h}\|_{H(\text{div},\Omega)}$ &Order & $\|u-u_h\|_{1,\Omega}$ & Order\\ [0.5ex] 
 \hline
  $2^{-2}$ & 9.818 x $10^{-1}$ & - &   2.040 x $10^{-1}$    & - &  5.852 x $10^{-1}$ & -    \\ 
 \hline
   $2^{-3}$& 4.980 x $10^{-1}$ & 1.182 &1.130 x $10^{-1}$ &1.029&3.137 x $10^{-1}$&1.086    \\
 \hline
   $2^{-4}$&2.493 x $10^{-1}$ & 1.095 &  5.740 x $10^{-2}$  &1.071 & 1.609 x $10^{-1}$ & 1.057 \\
 \hline
  $2^{-5}$ &1.246 x $10^{-1}$ & 1.048 & 2.880 x $10^{-2}$  &1.042&8.118 x $10^{-2}$ & 1.034 \\
 \hline
$2^{-6}$& 6.227 x $10^{-2}$ & 1.023 & 1.441 x  $10^{-2}$ &1.022&  4.070 x $10^{-2}$ & 1.019 \\ 
 \hline
  $2^{-7}$ & 3.113 x $10^{-2}$& 1.012 & 7.207 x $10^{-3}$ &1.011& 2.037 x $10^{-2}$&  1.010 
\\[1ex] 
 \hline
\end{tabular}
\end{center} 
\end{table}
\begin{figure}[H]
    \centering
    \includegraphics[height=6cm,width=12cm]{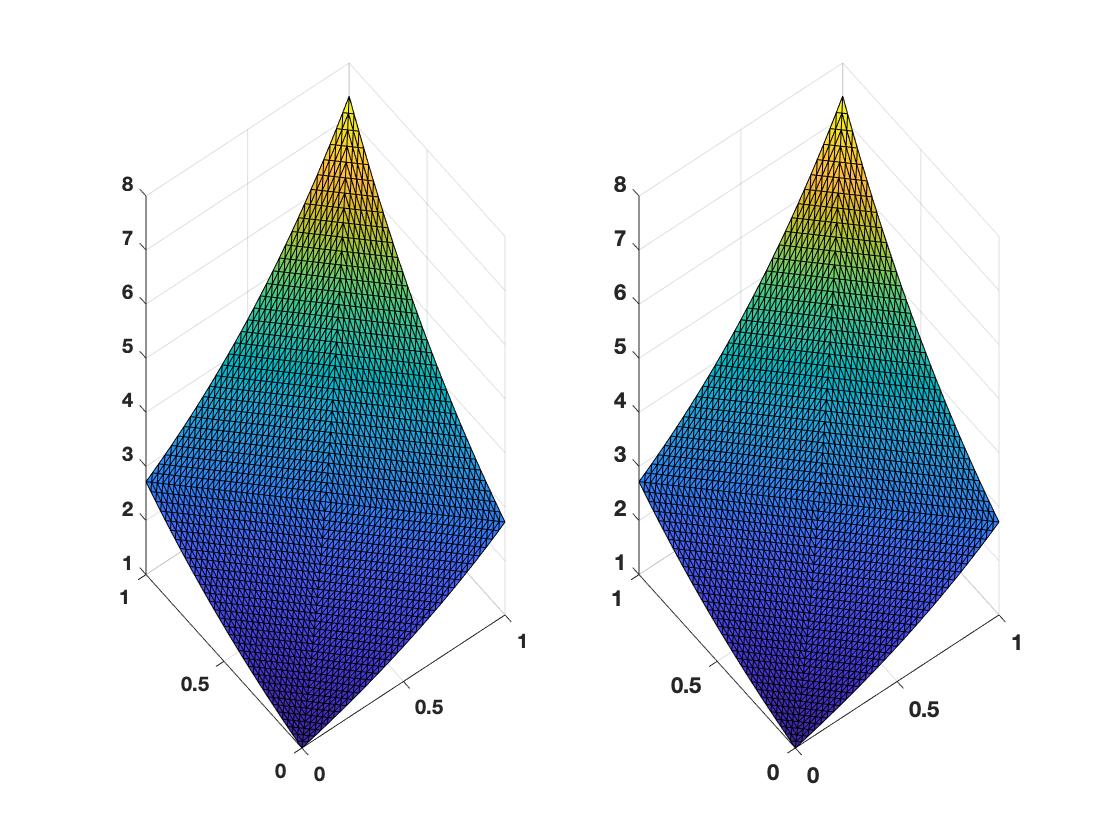}
    \caption{The computed control(left) and exact control(right) for Example \ref{aprioriexmp}.}
    \label{fig5}
\end{figure}
 \begin{example}\label{exmpapost}
In this example, we consider the $L-$shaped domain as shown in the Figure \ref{fig3} and the regularization parameter $\alpha=1$. The data is chosen as follows: the state $y(r, \theta)=r^{(2/3)}\sin{(\frac{2 \theta}{3})}$, the adjoint state  $z (x_1,x_2)=x_1^2(1-x_1^2)^2x_2^2(1-x_2^2)^2$ and control variable  $u(r, \theta)=r^{(2/3)}\sin{(\frac{2 \theta}{3})}$. We then compute $\mathbf{p}=-\nabla y$, $\mathbf{r}=-\nabla z$, $f=-\Delta y$, $y_d=y+ \Delta z$ and $u_d=u$ as in the previous example.  
 Figure \ref{fig1} { illustrates} the behavior of the error estimator $\eta_h$ and the total error $|u-u_h|_{1, \Omega}+\|z-z_h\|+\|y-y_h\|+ \|\mathbf{p-p_h}\|_{H(\text{div}, \Omega)}+ \|\mathbf{r-r_h}\|_{H(\text{div}, \Omega)}$ with respect to increasing number of total degrees of freedom (total number of unknowns for optimal state $y$, $\mathbf{p}$, optimal control $u$ and optimal adjoint state $z$, $\mathbf{r}$). This figure confirms the reliability of the error estimator and we observe that the error and the estimator both converge with the linear rate which is optimal.{ The convergence history of the estimator contributions $\eta_{1}$, $\eta_{i}$ for $5\leq i \leq 16$ is recorded in Figures \ref{fig111} and \ref{fig112}, note that $\eta_{2}=\eta_{3}=\eta_{4}=0$ for this experiment.} Figure \ref{fig2} shows the efficiency of the error estimator using the efficiency indices(estimator/total error). The adaptive mesh refinement is depicted through Figure \ref{fig3}, as expected we observe more refinement near the corner where the optimal variables have singular behavior. The plots of the exact and discrete { controls} on an adaptive mesh is shown in the Figure \ref{fig4}. {We have compared the error and estimator for different values of marking parameter $\theta$ in Figure \ref{fig11} and \ref{fig12}.}
\end{example}


\begin{figure}[t]
\centering
\begin{subfigure}{.4\textwidth}
  \centering
  \includegraphics[width=6.7cm,height=6cm]{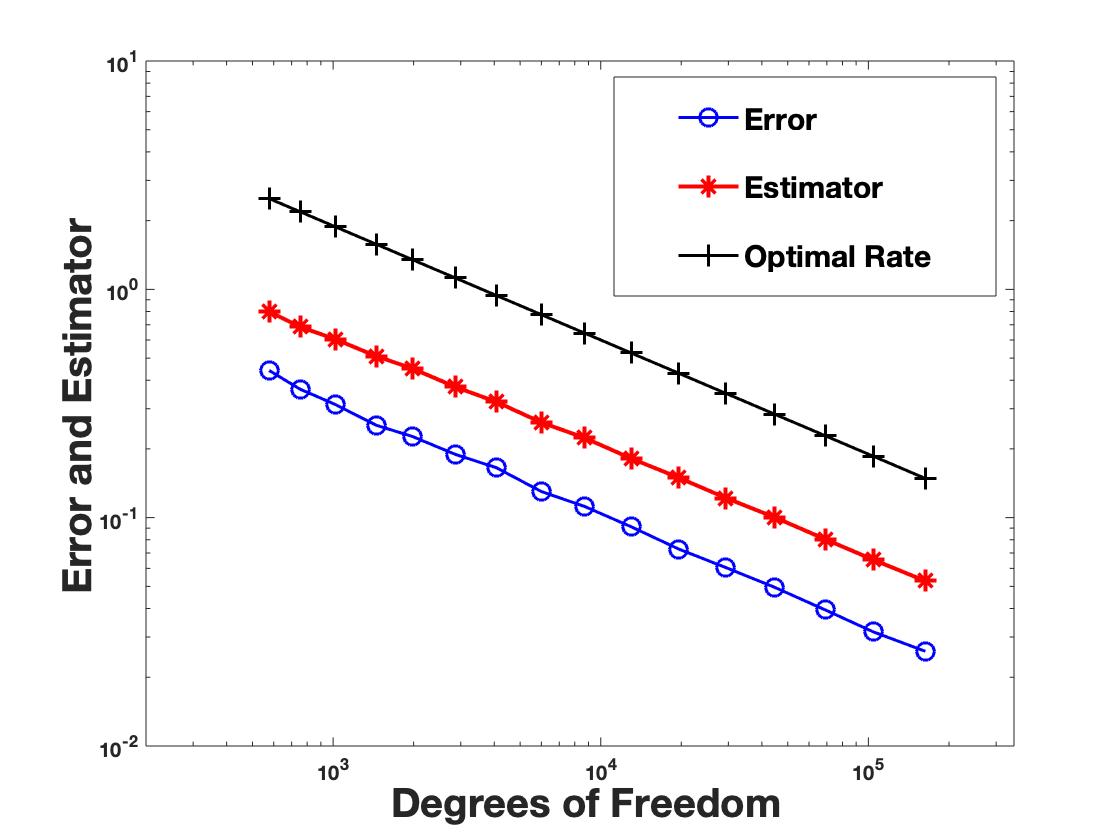}
 \caption{Error and estimator}
 \label{fig1}
\end{subfigure}%
\begin{subfigure}{.4\textwidth}
  \centering
  \includegraphics[width=7cm,height=6cm]{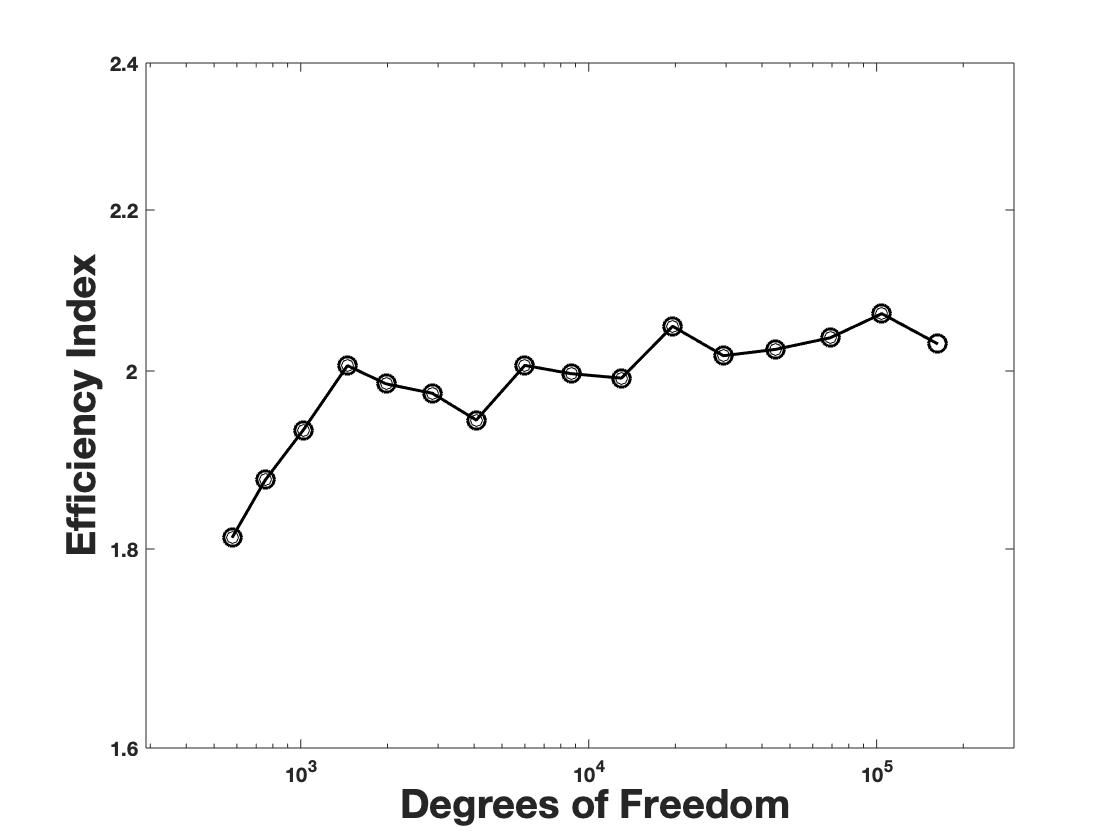}
\caption{Efficiency Index}
   \label{fig2}
\end{subfigure}
 \caption{{\large
Error, Estimator and Efficiency Index for Example \ref{exmpapost}}.} \label{fig:Error}
\end{figure}

\begin{figure}[t]
\centering
\begin{subfigure}{.4\textwidth}
  \centering
  \includegraphics[width=7cm,height=6cm]{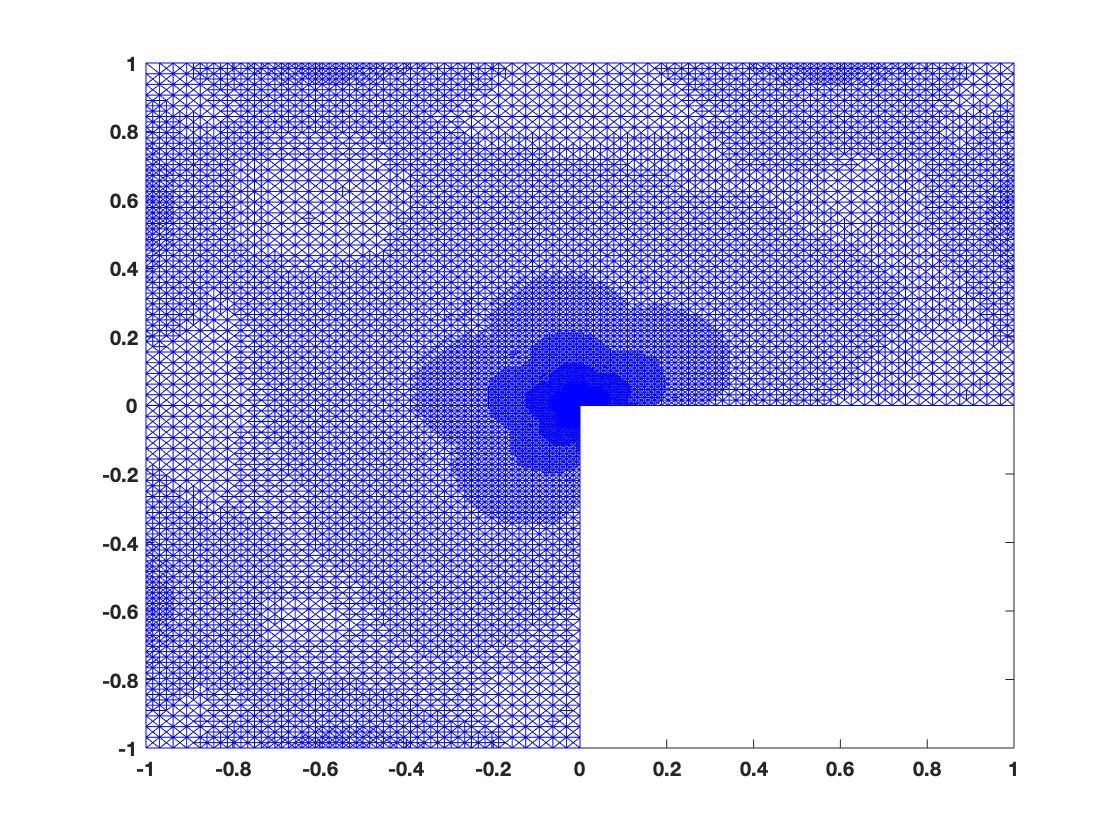}
 \caption{Adaptive mesh refinement}
    \label{fig3}
\end{subfigure}%
\begin{subfigure}{.4\textwidth}
  \centering
  \includegraphics[width=9cm,height=6cm]{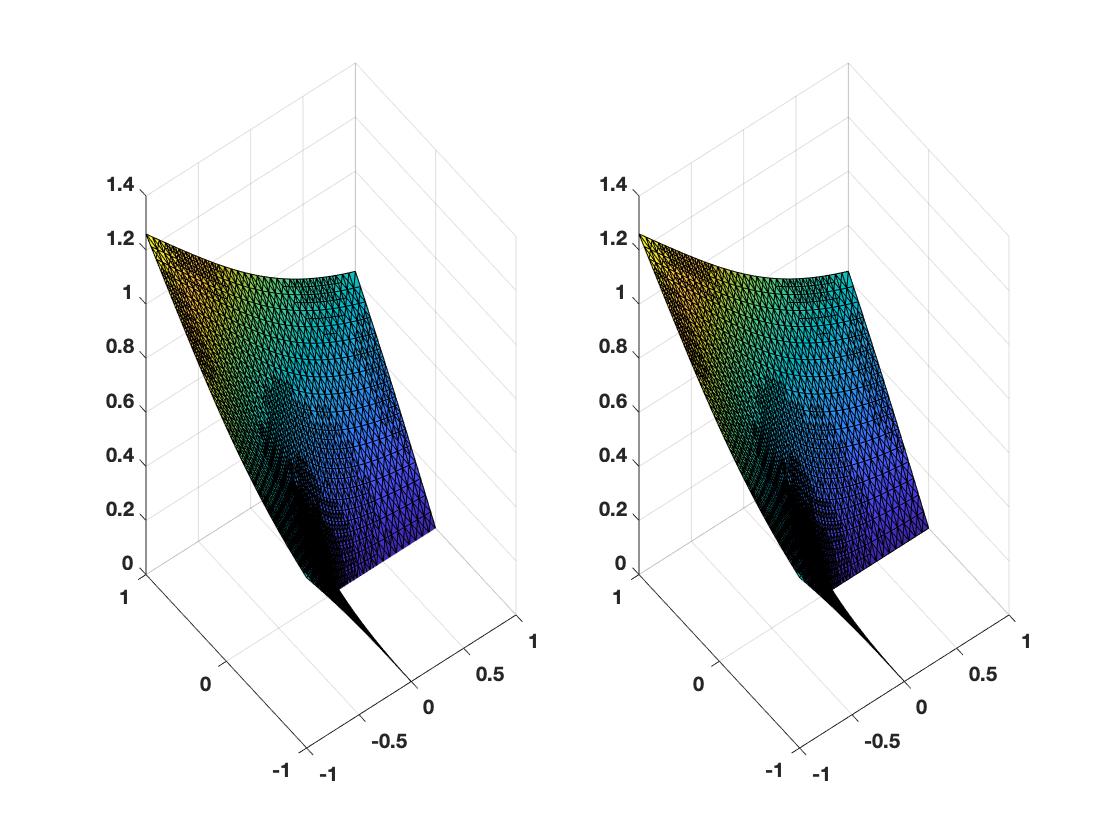}
\caption{ Discrete control and exact control }
    \label{fig4}
\end{subfigure}
 \caption{{\large
Adaptive mesh refinement and plots of the discrete (left) and the exact control (right)  for Example \ref{exmpapost}}.} \label{fig:Mesh}
\end{figure}

\begin{figure}[t]
\centering
\begin{subfigure}{.4\textwidth}
  \centering
  \includegraphics[width=6.7cm,height=6cm]{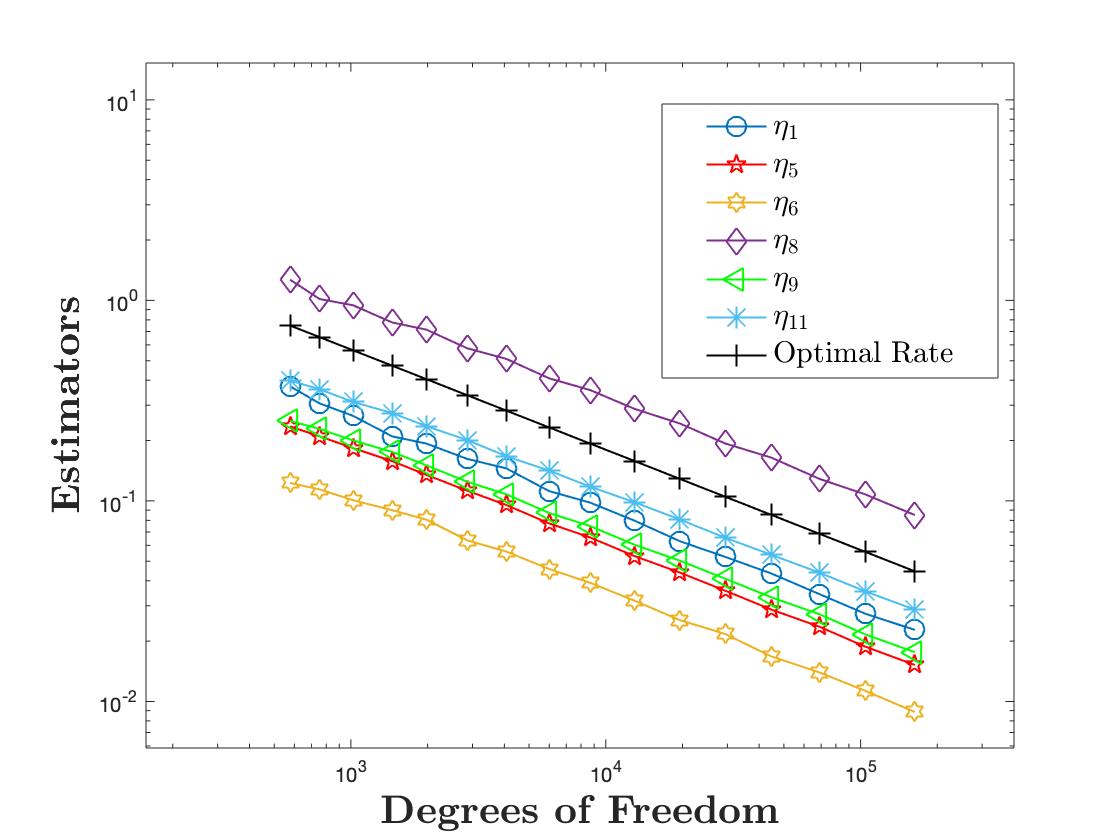}
 \caption{Volume and edge residuals}
    \label{fig111}
\end{subfigure}%
\begin{subfigure}{.4\textwidth}
  \centering
  \includegraphics[width=6.7cm,height=6cm]{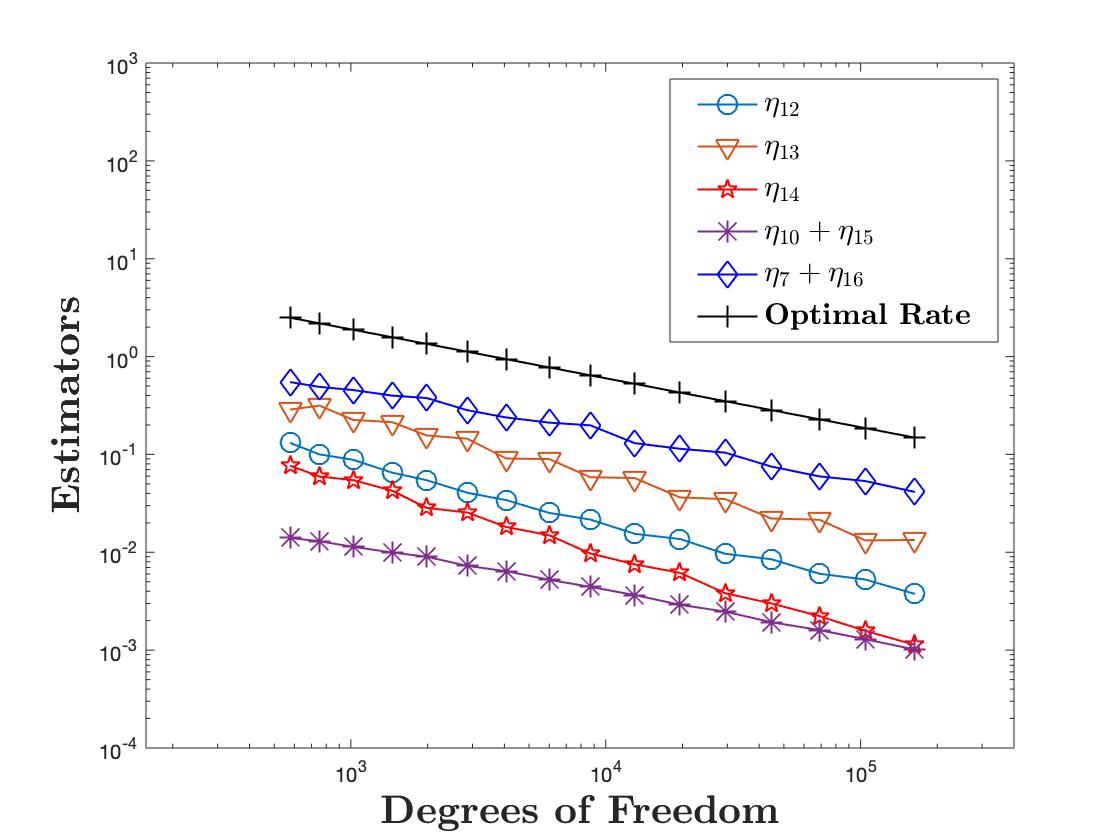}
\caption{ Boundary residuals}
    \label{fig112}
\end{subfigure}
 \caption{{\large
Estimator contributions for Example \ref{exmpapost}}.} 
\end{figure}

\begin{figure}[t]
\centering
\begin{subfigure}{.4\textwidth}
  \centering
  \includegraphics[width=6.7cm,height=6cm]{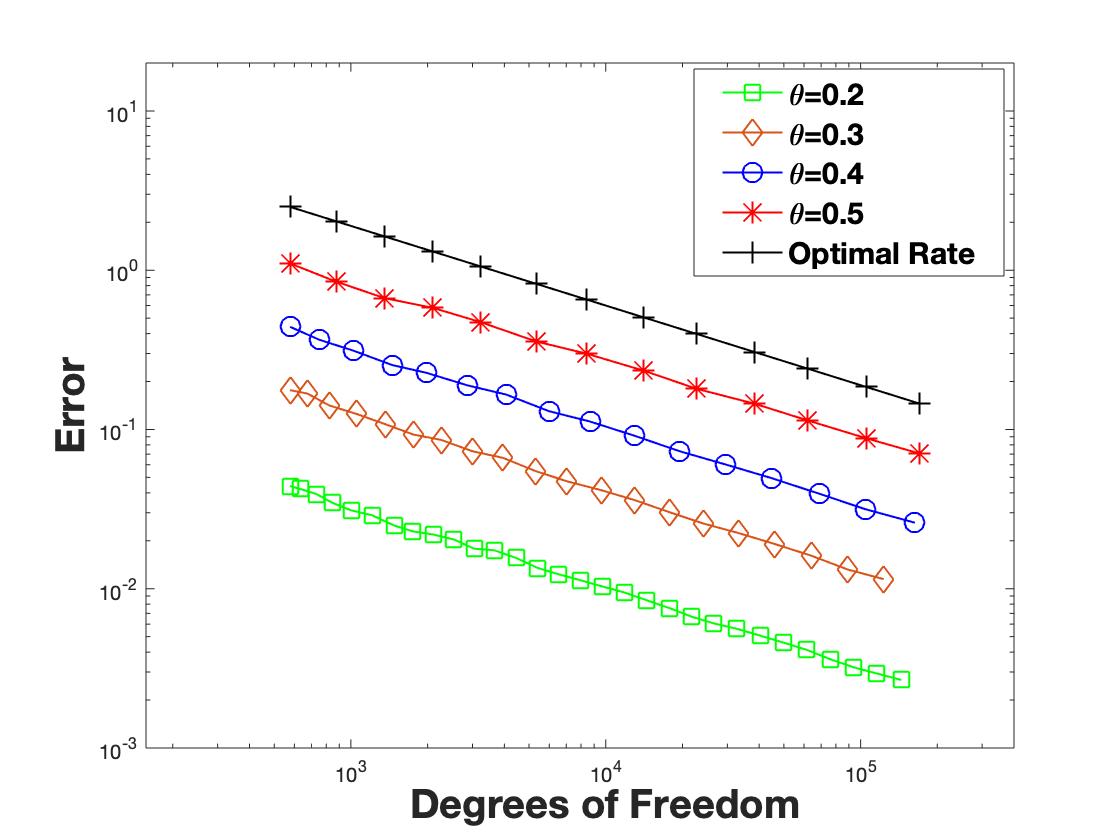}
 \caption{Error for different values of $\theta$}
    \label{fig11}
\end{subfigure}%
\begin{subfigure}{.4\textwidth}
  \centering
  \includegraphics[width=6.7cm,height=6cm]{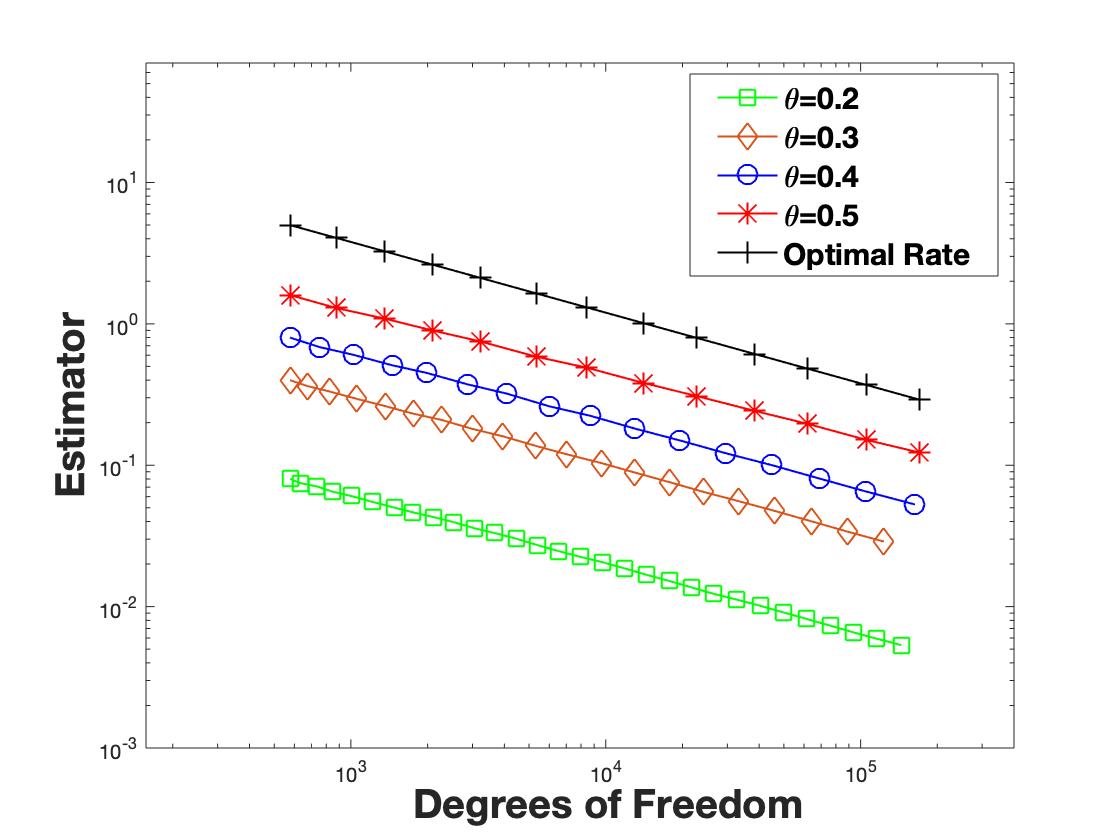}
\caption{ Estimator for different values of $\theta$ }
    \label{fig12}
\end{subfigure}
 \caption{{\large
Comparison of error and estimator for different values of  marking parameter $\theta$ for Example \ref{exmpapost}}.} 
\end{figure}


\section{Conclusions}\label{CNLS} 
\noindent
In this article we have developed a priori and a posteriori error analysis of mixed finite element method for the second order Dirichlet boundary control problem using energy space based approach. It is advantageous to use the mixed finite element methods for Dirichlet boundary control problem as it naturally incorporate  the normal derivative of costate on the boundary
in the weak formulation. The optimality system, construction of the suitable auxiliary problems and Helmholtz decomposition are crucial ingredients used in the analysis. The convergence of the method is illustrated over both uniform and adaptive meshes through numerical experiments.
 Though the analysis is carried out in two dimension but the same ideas can be extended to three dimension.\\
\par \noindent
\textbf{Acknowledgement.} We thank Professor Thirupathi Gudi for useful discussions on the Dirichlet boundary control problems.

\end{document}